  \newtheorem{theorem}{Theorem}[section]
  \newtheorem{lemma}[theorem]{Lemma}
  \newtheorem{proposition}[theorem]{Proposition}
  \newtheorem{corollary}[theorem]{Corollary}
  \theoremstyle{definition}
  \newtheorem{definition}[theorem]{Definition}
  \newtheorem*{remark}{Remark}
  \newtheorem{problem}[theorem]{Problem}
  \numberwithin{equation}{section}
  \newcommand{\Z}{{\mathbb Z}}
  \newcommand{\R}{{\mathbb R}}
  \renewcommand{\C}{{\mathbb C}}
  \newcommand{\D}{{\mathbb D}}
  \newcommand{\T}{{\mathbb T}}
  \newcommand{\E}{{\mathsf E}}
  \newcommand{\PSL}{\mathrm{PSL}}
  \newcommand{\supp}{{\operatorname{supp}}}
  \newcommand{\sgn}{\operatorname{sgn}}
  \newcommand{\cD}{{\mathcal{D}}}
  \newcommand{\cF}{{\mathcal{F}}}
  \newcommand{\cL}{{\mathcal{L}}}
  \newcommand{\cP}{{\mathcal{P}}}
  \newcommand{\cV}{{\mathcal{V}}}
  \newcommand{\bb}{\mathbf{b}}
  \newcommand{\ba}{\mathbf{a}}
    \newcommand{\bz}{\mathbf{z}}
  \newcommand{\bc}{\mathbf{c}}
  \newcommand{\bC}{\mathbf{C}}
  \renewcommand{\G}{{\Gamma}}
  \newcommand{\e}{{\varepsilon}}
  \newcommand{\z}{\zeta}
  \newcommand{\g}{\gamma}
  \renewcommand{\l}{\lambda}
  \renewcommand{\a}{\alpha}
  \renewcommand{\d}{\delta}
  \newcommand{\spa}{\operatorname{span}}
  \renewcommand{\Re}{\operatorname{Re}}
  \renewcommand{\Cap}{\operatorname{Cap}}
  \newcommand{\dist}{\operatorname{dist}}
  \newcommand{\dx}{\mathrm{d}x}
  \newcommand{\wlim}{\operatorname*{w-lim}}
  \newcommand{\bbD}{\mathbb{D}}
  \newcommand{\bbZ}{\mathbb{Z}}
  \newcommand{\bbR}{\mathbb{R}}
  \newcommand{\bbC}{\mathbb{C}}
  \newcommand{\bbT}{\mathbb{T}}
  \title[Asymptotics of Chebyshev rational functions]{Asymptotics of Chebyshev rational functions with respect to subsets of the real line}
\thanks{B.E.\ was supported by Austrian Science Fund FWF, project no: J 4138-N32.}
\thanks{M.L.\ was supported in part by NSF grant DMS--1700179.}
\thanks{G.Y.\ was supported in part by NSF grant DMS--1745670.}
\author{Benjamin Eichinger, Milivoje Luki\'c, Giorgio Young}
\address{Institute of Analysis, Johannes Kepler University Linz, 4040 Linz, Austria.}
\email{benjamin.eichinger@jku.at}
\address{Department of Mathematics, Rice University MS-136, Box 1892,
Houston, TX 77251-1892, USA.}
\email{milivoje.lukic@rice.edu}
\address{Department of Mathematics, Rice University MS-136, Box 1892,
Houston, TX 77251-1892, USA.}
\email{gfy1@rice.edu}
\begin{document}
\begin{abstract}
There is a vast theory of Chebyshev and residual polynomials and their asymptotic behavior. The former ones maximize the leading coefficient and the latter ones maximize the point evaluation with respect to an $L^\infty$ norm.
We study Chebyshev and residual extremal problems for rational functions with real poles with respect to subsets of $\overline{\R}$. We prove root asymptotics under fairly general assumptions on the sequence of poles. Moreover, we prove Szeg\H o--Widom asymptotics for sets which are regular for the Dirichlet problem and obey the Parreau--Widom and DCT conditions.
\end{abstract}

\maketitle

\section{Introduction}

Chebyshev polynomials are extremal polynomials with respect to the supremum norm on a compact set $\E$. First discovered with explicit formulas for the set $\E = [-1,1]$, see \cite{Chebyshev,Chebyshev2}, a general theory has developed for more general sets $\E$, with important classical and modern developments \cite{ChrSimZin,ChriSiZiYuDuke,EichYuSbornik,SodYud93,Widom69}. Aspects of this theory have been extended to the setting of residual polynomials \cite{ChrSimZin} (which are extremizers with respect to a point evaluation rather than leading coefficient) and to the setting of Chebyshev rational functions with poles in $\overline{\bbR} = \bbR \cup \{\infty\}$ \cite{LukashovJAT98}.

To state the problems precisely, we make the following definitions. For $\bc \in \overline{\bbR}$ we denote
\begin{align*}
r(z,\bc)=\begin{cases}
\frac{1}{\bc-z},\quad &\bc\neq\infty,\\
z,&\bc=\infty.
\end{cases}
\end{align*}
We fix a compact proper subset $\E \subset \overline{\bbR}$ containing infinitely many points. Connected components of $\overline{\bbR} \setminus \E$ are called gaps of $\E$. We fix a sequence of poles $\bC = (\bc_k)_{k=1}^\infty$ with $\bc_k \in \overline{\bbR} \setminus \E$. The sequence $\bC$ can have repetitions, which are used to designate multiplicity: we consider the spaces of rational functions $\cL_n$ defined as
\begin{align}\label{eq:rationalspace}
\cL_n=\left\{\frac{P(z)}{R_n(z)}:\ P\in\cP_n\right\},
\end{align}
where $\cP_n$ denotes the set of polynomials of degree at most $n$ and
\begin{align}\label{eq:denominator}
R_n(z)=\prod_{\substack{1 \le k \le n \\ \bc_k \neq \infty}} (z-\bc_k).
\end{align}
Of course, the spaces $\cL_n$ could also be defined iteratively, by
\begin{align*}
\cL_n=\spa\left\{r(z,\bc_n)^{d_n}\right\} \oplus \cL_{n-1},\quad \cL_0=\{1\},
\end{align*}
where $d_n$ denotes the multiplicity of the pole $\bc_n$ up to that point,
\begin{align*}
d_n= \sum_{\substack{1\le k\le n \\ \bc_k = \bc_n}} 1.
\end{align*}
Let $\| \cdot \|_\E$ denote the supremum norm on $\E$. We consider the two related extremal problems:
  \begin{problem}[Chebyshev extremal problem] \label{prob1}
  \begin{align}\label{eq:Linftyextremal}
m_n(\bc_n) := \sup\{\Re\l_n: \exists F_n \in \cL_n \text{ such that } \|F_n\|_\E\leq 1 \text{ and } F_n - \l_n r(\cdot,\bc_n)^{d_n}\in \cL_{n-1}  \}.
  \end{align}
\end{problem}

  \begin{problem}[Residual extremal problem] \label{prob2}
  For $x_* \in\overline{\R}\setminus (\E\cup\{\bc_k: 1\leq k\leq n\})$,
  	\begin{align}\label{eq:residual}
  	m_n(x_*):=\sup\{\Re F_n(x_*): F_n\in\cL_n,\ \|F_n\|_\E\leq 1 \}.
  	\end{align}
    \end{problem}

If $\bc_k=\infty$ for all $k$, Problem~\ref{prob1} is the standard extremal problem for Chebyshev polynomials on $\E$. For this reason we refer to $\l_n$ still as the leading coefficient.
 Whereas the Chebyshev extremal problem maximizes the leading coefficient at the pole $x_* = \bc_n$, the residual extremal problem maximizes the value at a point $x_*$ which is not a pole. We will use the notation $x_*$ for both problems when convenient.

For both problems, an extremal function exists (i.e., the supremum is a maximum) and is unique (see Section~\ref{sectionPropertiesFixedn}). The goal of this paper is to study the extremal functions $F_n$ and their asymptotics as $n\to\infty$.

Problems~\ref{prob1}/\ref{prob2} have a conformal invariance with respect to the group $\PSL(2,\bbR)$ of $\overline{\bbR}$-preserving, orientation-preserving M\"obius transformations. This conformal invariance is obfuscated by the use of polynomials in the definitions \eqref{eq:rationalspace} and \eqref{eq:denominator}, but can be made explicit in the language of divisors. Divisors on the Riemann sphere $\overline{\bbC} = \bbC \cup \{\infty\}$ are elements of the free Abelian group over $\overline{\bbC}$. They can be implemented as formal sums or as functions $D: \overline{\bbC} \to \bbZ$ which take nonzero values only at finitely many points; we will find the second interpretation notationally convenient. The degree of $D$ is the integer $\deg D = \sum_{z} D(z)$, and the divisor $D$ is integral if $D(z) \ge 0$ for all $z$.  We also write $D_1\leq D_2$, if $D_2-D_1$ is integral and denote by $\supp D=\{z\in\overline\C: D(z)\neq 0\}$ the support of $D$. In particular, for a meromorphic nonconstant function $f: \overline{\bbC}\to \overline{\bbC}$, we denote its polar divisor by $(f)_\infty$; the polar divisor assigns to each pole the multiplicity of that pole, and takes zero values elsewhere. Similarly, for $w\in \bbC$, we define $(f)_w = (1/(f-w))_{\infty}$. The value $\deg (f)_w$ is independent of $w$ and corresponds to the degree of $f$.  We also follow the convention to set $(f)_w= 0$, if $f$ is a constant. For any $n$, we define the divisor $D_n^\infty$ by
\begin{align}\label{def:Dninfty}
	D_n^\infty(\bc)=\#\{k:\ \bc_k=\bc,\ 1\leq k\leq n\}.
\end{align}
In other words, in the functional interpretation, $D_n^\infty = \sum_{k=1}^n \chi_{\{\bc_k\}}$.  Note that by definition $\deg D_n^\infty=n$. Any integral divisor $D$ with degree $n$ generates a $n+1$ dimensional vector space
 \begin{align}\label{eq:LDivisor}
 \cL(D)=\{ f: \overline{\bbC} \to \overline{ \bbC} \mid f\text{ is meromorphic and } (f)_\infty \le D \},
 \end{align}
and the definition \eqref{eq:rationalspace} is equivalent to
\begin{align}\label{eq:LnDivisor}
\cL_n = \cL(D_n^\infty).
\end{align}

Now Problems~\ref{prob1}, \ref{prob2} can be unified as follows:

 \begin{problem} \label{prob3}
For a real integral divisor $D_n^\infty$ with $\deg D_n^\infty = n$ containing only points in $\overline{\bbR} \setminus \E$, and a point  $x_* \in\overline{\R}\setminus \E$, denote $d_n = D_n^\infty(x_*)$ and $\cL_n=\cL(D_n^\infty)$ and find
  	\begin{align}\label{eq:ExtremalProblem}
  	m_n(x_*):=\sup\{\Re \lim_{x\to x_*} \frac{F_n(x)}{r(x,x_*)^{d_n}} :  F_n\in\cL_n,\ \|F_n\|_\E\leq 1 \}.
  	\end{align}
\end{problem}
The Chebyshev problem corresponds to $d_n > 0$ (up to a permutation of $\bc_1,\dots, \bc_n$) and the residual problem corresponds to $d_n = 0$. Throughout this paper, we work in the general setting of Problem~\ref{prob3}.

In order to state our results in a conformally invariant form, we use the following language:

\begin{definition}
For a sequence $(t_j)_{j=0}^m$ in $\overline{\bbR}$ with $m\ge 2$, we say that the sequence is cyclically ordered if it has no repetitions and there exists $f \in \PSL(2,\bbR)$ such that $f(t_0) = \infty$ and $f(t_1) < f(t_2) < \dots < f(t_{m})$. We will also use cyclic interval notation: for distinct $a, b \in \overline{\bbR}$, we denote
\[
(a,b) = \{ c \mid (a,c,b)\text{ is cyclically ordered}\}, \qquad [a,b] = \{a,b\} \cup (a,b).
\]
\end{definition}

This gives a well-defined cyclic order, since $\PSL(2,\bbR)$ transformations preserve orientation on $\overline{\bbR}$.

Chebyshev polynomials for subsets of $\bbR$ have many universal properties; the Chebyshev alternation theorem compresses all these properties in a way that uniquely characterizes the extremizer. Namely, a polynomial $P_n$  of degree n so that $\|P_n\|_\E\leq 1$ has a maximal set of alternation points if there are $n+1$ points $x_1<\dots<x_{n+1}$, $x_i\in\E$, so that
\begin{align}\label{eq:alteCheby}
P_n(x_j)=(-1)^{n+1-j}.
\end{align}
Then $P_n$ is the Chebyshev polynomial for the set $\E$, if and only if it has a maximal set of alternation points.
One way of viewing the alternation theorem is the following. The Chebyshev polynomial, $T_n$, for $\E$ has $n$ real and simple zeros and between each of them there should be an alternation point, which gives $n-1$ of them and then there should be one at each gap edge of the extremal gap (in this case the one containing $\infty$) which sums up to $n+1$ points of alternation. In particular $x_1$ and $x_{n+1}$ will always be counted, because of the natural order of $\R$.  Similarly, residual polynomials have an alternation theorem, which relies on a notion of an $x_*$ alternation set \cite{ChrSimZin}. Furthermore, by \cite{ChrSimZin}, in the polynomial case, such a set characterizes the residual polynomial: $P_n$ is the residual polynomial for the set $\E$ if and only if $\| P_n\|_{\E}\leq 1$ and $P_n$ has an $x_*$ alternation set.

In the setting of rational functions the counting is essentially more delicate, and the relative ordering of the poles and alternation points play an important role. The reason for this is that if between two zeros there is a gap with a pole $\bc_j$, then the sign at the next gap edge depends on the parity of the pole. This makes it necessary to define the following \textit{sign function}:
\begin{align*}
S_n(x)=    \sum_{\substack{1 \le k \le n \\ \bc_k \neq x_*}} \chi_{[x_*,\bc_k)}(x) = \sum_{\bc\in \overline \R\setminus\{x_*\}}D^\infty_n(\bc)\chi_{[x_*,\bc)}(x).
\end{align*}

Recall that a function $F$ is called real if for all $z\in\C$, $\overline{F(\overline{z})}=F(z)$.

\begin{definition}
For a real function $F \in\cL_n$ with $\|F\|_\E\leq 1$, a set of distinct points $x_1, \dots, x_m \in \E$ such that the sequence $(x_*, x_1, \dots, x_{m})$ is cyclically ordered and satisfies the following alternation property
\begin{align}
F(x_j)=(-1)^{m-j-S_n(x_j)}
\end{align}
for all $j=1,\dots,m$ is called an alternation set. We say that $F$ has a maximal alternation set if $m=n+1$.
\end{definition}

It should be noted that the notion of alternation set depends on the function $F$, the class $\cL_n$, and the reference point $x_*$.

\begin{theorem}[Alternation theorem] \label{thmAlternation}
A real function $F \in \cL_n$ with $\lVert F \rVert_\E \le 1$ is an extremal function if and only if it has a maximal alternation set.
\end{theorem}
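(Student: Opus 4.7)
The plan is to prove both implications by contradiction.

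For the implication that a maximal alternation set implies extremality, assume $F \in \cL_n$ is real with $\|F\|_\E \le 1$ and has a maximal alternation set $x_1, \ldots, x_{n+1}$. Were some $G \in \cL_n$ with $\|G\|_\E \le 1$ to achieve a strictly larger extremal value (we may take $G$ real by replacing it with its real part), the difference $H := G - F$ would lie in $\cL(D_n^\infty)$, so the identity $\deg(H)_0 = \deg(H)_\infty \le \deg D_n^\infty = n$ on $\overline{\bbC}$ would cap the total number of zeros of $H$ (with multiplicity) at $n$. On the other hand, at each $x_j$, the inequality $|G(x_j)| \le 1 = |F(x_j)|$ forces $H(x_j) F(x_j) \le 0$, so $\sgn H(x_j) = -\sgn F(x_j)$ whenever $H(x_j) \neq 0$. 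A parity-of-sign-change analysis on each cyclic subinterval $(x_j, x_{j+1})$ --- where a pole of $H$ of multiplicity $k$ contributes $(-1)^k$ and an odd-multiplicity zero contributes $(-1)$ to the sign change --- combined with the alternation identity $F(x_j) = (-1)^{n+1-j-S_n(x_j)}$ (in which $S_n$ is precisely calibrated to absorb the pole parity between alternation points) forces at least one odd-multiplicity zero of $H$ in each of the $n$ cyclic subintervals not containing $x_*$. In the remaining cyclic subinterval containing $x_*$, a similar parity analysis (using $H(x_*) = \l(G) - \l(F) > 0$ to split at $x_*$ in the residual case $d_n = 0$, or the positive leading behavior of $H$ at $x_*$ in the Chebyshev case $d_n > 0$) yields at least one further zero, producing at least $n+1$ zeros in $\overline{\bbR}$. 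This contradicts the divisor bound, so $H \equiv 0$, contradicting $\l(G) > \l(F)$.

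For the reverse implication, suppose $F$ is extremal but its largest alternation set $x_1, \ldots, x_m$ has $m \le n$, and set $E_\pm := \{x \in \E : F(x) = \pm 1\}$. Maximality forces the adjusted sign $(-1)^{S_n(x)} F(x)$ to be constant on $(E_+ \cup E_-) \cap (x_j, x_{j+1})$ for each cyclic subinterval, since any sign change there would allow the insertion of a new alternation point; accordingly, $E_+ \cup E_-$ decomposes into $m$ compact bundles separated by gaps relatively open in $\overline{\bbR}$. Select separating points $y_i$ in these gaps between consecutive bundles of opposite adjusted sign, with each $y_i \notin E_+ \cup E_-$. Using $R_n$ as in \eqref{eq:denominator}, seek $Q \in \cL_n$ of the form $\bigl(\prod_i(z-y_i)\bigr) \tilde P(z) / R_n(z)$ where $\tilde P$ is a real polynomial of appropriate degree. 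Since $\dim \cL_n = n+1$ and we impose at most $m-1 \le n-1$ zero conditions, the subspace of admissible $Q$'s has dimension at least two; this affords enough freedom to choose $\tilde P$ so that simultaneously (i) $\sgn Q(x) = -\sgn F(x)$ on $E_+ \cup E_-$, and (ii) $\l(Q) > 0$. Then for all sufficiently small $\e > 0$, the function $F + \e Q \in \cL_n$ satisfies $\|F + \e Q\|_\E \le 1$ (by a compactness argument: near the compact set $E_+ \cup E_-$ the strict inequality $QF < 0$ forces $|F + \e Q| < 1$, while $|F|$ is strictly below $1$ on the complement) and $\l(F + \e Q) = \l(F) + \e \l(Q) > \l(F)$, contradicting extremality.

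The main obstacle lies in the reverse direction, specifically the simultaneous realization of the two conditions on $Q$: matching a prescribed sign pattern on the possibly intricate (e.g., fractal) sets $E_\pm$ and arranging $\l(Q) > 0$. The structural input that makes this possible is that $S_n$ is calibrated so that crossing any pole flips $F$ and the factor $(-1)^{S_n}$ in tandem, so that the adjusted sign $(-1)^{S_n(x)} F(x)$ is ``continuous'' across poles; this reduces the sign-matching problem to a classical Chebyshev-style alternation in the adjusted frame. The remaining flexibility in $\tilde P$ --- choice of overall leading sign, plus placement of its real zeros outside $E_+\cup E_-$ (in particular, using coordinated pairs to leave the bundle sign pattern unchanged) --- must be exploited carefully depending on the parity of $n-m+1$ to reconcile the sign condition with the positivity of $\l(Q)$.
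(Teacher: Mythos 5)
Your converse direction (a maximal alternation set implies extremality) is essentially the paper's proof of Theorem~\ref{thm:altsetconverse}, differing only in normalization: the paper rescales both functions to the dual minimum-norm problem so that the difference $\tilde H_n=\tilde F-\tilde F_n$ automatically acquires a generalized zero at $x_*$, whereas you keep the original normalization and extract the extra zero from the sign pattern across the cyclic subinterval containing $x_*$. Both are sound once one remembers to count pole-order reductions as generalized zeros and to treat the degenerate case $H(x_j)=0$; this half is not a different route.

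Your forward direction, however, departs genuinely from the paper's, and it has real gaps. The paper never perturbs at this stage: Theorem~\ref{thm:altset} reads the alternation set directly off the $n$ simple real generalized zeros of $F_n$, relying on the full structural Theorem~\ref{thm:MarkovCorrection} (proved by Markov corrections) for simplicity, the one-per-gap property, absence of zeros in the extremal gap, and the explicit $\pm 1$ values at $\ba,\bb$. You instead attempt the de la Vall\'ee Poussin--style single correction $F\mapsto F+\e Q$. First, the bundle decomposition is misstated: the adjusted sign $(-1)^{S_n(x)}F(x)$ cannot be constant on $(E_+\cup E_-)\cap(x_j,x_{j+1})$, since consecutive alternation points have opposite adjusted signs, so each such subinterval necessarily straddles a sign crossing. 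What maximality actually forces is that there is exactly one crossing per subinterval (two more would hand you an $(m{+}2)$-point alternating set), which does yield $m$ bundles but needs to be argued as such. Second, and more seriously, you have not shown that $\sgn Q=-\sgn F$ on $E_+\cup E_-$ and $\l(Q)>0$ are simultaneously achievable by a function of the form $\prod_i(z-y_i)\tilde P/R_n$. Already for $\E\subset\R$ compact with all poles at $\infty$, the separating product with positive leading coefficient matches $+\sgn F$ on the bundles, so reversing the sign forces the leading coefficient of $\tilde P$ to be negative whenever the extra zeros of $\tilde P$ lie off one side of $\E$, destroying $\l(Q)>0$; the standard fix is either to take $\deg Q<n$ so $\l(Q)=0$ and then rescale $F-\e Q$, or to add a small multiple $\delta F$ to $Q$ to boost the leading coefficient without disturbing the sign on $E_+\cup E_-$. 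Either device is a genuine additional step, not bookkeeping, and you do not supply it. In the rational setting there is yet another layer you gesture at but do not carry out: the interleaved poles $\bc_k$ flip $\sgn F$ between bundles, so the sign-matching condition must be written through $S_n$ and the placement of the $y_i$ relative to the poles must be controlled. The paper's two-stage structure (Theorem~\ref{thm:MarkovCorrection} first, then Theorem~\ref{thm:altset}) is precisely what makes this rational bookkeeping tractable, and your proposal has not replaced it with a complete alternative.
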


These results generalize standard results from the polynomial case: in the Chebyshev polynomial case, $S_n(x)\equiv 0$, and in the residual polynomial case,  $S_n$ has one jump which may or may not affect the alternation criterion, depending on degree. The case of Chebyshev rational functions was also previously formulated in \cite{LukashovJAT98}. In all the cases previously considered in the literature, the extremizer is seen to be nonconstant. However, in the setting of residual rational functions, the extremizer can be a constant function, and the alternation theorem lets us characterize when this happens:

\begin{theorem} \label{thmConstant}
The extremal function $F_n$ is constant if and only if the divisor $D_n^\infty$ is of the form \eqref{def:Dninfty} for points $\bc_1,\dots, \bc_n$ such that the points $x_*, \bc_1, \bc_2, \dots, \bc_n$ are in $n+1$ distinct gaps of $\E$.
\end{theorem}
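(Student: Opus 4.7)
The plan is to reduce the assertion to the alternation theorem (Theorem~\ref{thmAlternation}) in two moves: first, identify that the only possible constant extremizer is $F_n\equiv 1$; second, decode the alternation condition for the constant function $1$ into the claimed combinatorial condition on gaps.

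First I would argue that if $F_n$ is constant then necessarily $d_n=0$ and $F_n\equiv 1$. Indeed, a constant $F\equiv c$ lies in $\cL_n$ with $\|F\|_\E=|c|\le 1$; if $d_n\ge 1$ then $\lim_{x\to x_*}F(x)/r(x,x_*)^{d_n}=0$, while the admissible element $r(\cdot,x_*)^{d_n}/\|r(\cdot,x_*)^{d_n}\|_\E\in\cL_n$ has strictly positive leading coefficient, so $m_n(x_*)>0$ and no constant can be extremal. Hence $d_n=0$, and since $F\equiv 1$ is admissible with $\Re F(x_*)=1$, one has $\Re c\ge 1$ combined with $|c|\le 1$, forcing $c=1$.

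Next I would apply Theorem~\ref{thmAlternation} to $F\equiv 1$: it is extremal iff there exist $x_1,\dots,x_{n+1}\in\E$ with $(x_*,x_1,\dots,x_{n+1})$ cyclically ordered and $(-1)^{n+1-j-S_n(x_j)}=1$ for all $j$. This simplifies to the parity condition $S_n(x_j)\equiv n+1-j\pmod 2$, equivalently, $S_n$ flips parity across each consecutive pair. To translate this into a gap condition, I would normalize $x_*=\infty$ using $\PSL(2,\R)$-invariance, so that $x_1<\cdots<x_{n+1}$ in $\R$ and $S_n(x)=\#\{k:\bc_k>x\}$. Then $S_n(x_j)-S_n(x_{j+1})$ is precisely the number of $\bc_k$ (with multiplicity) in $(x_j,x_{j+1})$, required to be odd. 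Since there are $n$ such arcs and only $n$ poles counted with multiplicity, each arc must contain exactly one pole of multiplicity one, and no poles lie in $(-\infty,x_1)\cup(x_{n+1},\infty)$. As $x_j\in\E$, each pole's gap is confined to its own arc, producing $n$ distinct gaps; the gap containing $x_*=\infty$ sits inside $(-\infty,x_1)\cup\{\infty\}\cup(x_{n+1},\infty)$, giving an $(n+1)$st distinct gap.

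For the converse, given that $x_*,\bc_1,\dots,\bc_n$ occupy $n+1$ distinct gaps, I would construct the alternation set explicitly: after normalizing $x_*=\infty$ and reordering so that $\bc_1<\cdots<\bc_n$, set $x_1=\min\E$, $x_{n+1}=\max\E$, and choose $x_j\in\E\cap(\bc_{j-1},\bc_j)$ for $2\le j\le n$ (nonempty because consecutive poles lie in different gaps of $\E$). A direct calculation gives $S_n(x_j)=n+1-j$, so $F\equiv 1$ has a maximal alternation set and is the extremizer.

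The main obstacle I anticipate is the careful bookkeeping of the sign function $S_n$ under cyclic reordering and making sure the parity constraints at all arcs are simultaneously consistent; once the $\PSL(2,\R)$ normalization is in place this reduces to a finite counting argument that pins down the pole multiplicities as $1$ and the arc distribution uniquely.
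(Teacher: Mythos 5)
Your proof is correct, and it takes a slightly different route from the paper's for the ``only if'' direction. The paper proves both implications more asymmetrically: for ``divisor of the claimed form $\Rightarrow$ constant'' it constructs an alternation set for $F\equiv 1$ and invokes Theorem~\ref{thm:altsetconverse} (exactly as you do), but for the converse it argues directly from the structural facts in Theorem~\ref{thm:MarkovCorrection}: if some $D_n^\infty(\bc_j)\ge 2$ then for a constant $F$ one would have $D_n^0(\bc_j)=D_n^\infty(\bc_j)\ge 2$, contradicting \eqref{it:zerosimple}; and if two distinct poles share a gap then a constant $F$ would have two generalized zeros in one gap, contradicting \eqref{it:1zero}. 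Your version instead first reduces any constant extremizer to $F\equiv 1$ with $d_n=0$ by comparing against $r(\cdot,x_*)^{d_n}$, and then decodes the parity condition $S_n(x_j)\equiv n+1-j\pmod 2$ from Theorem~\ref{thmAlternation} into a pigeonhole count of poles over the $n$ arcs $(x_j,x_{j+1})$; this forces exactly one simple pole per arc and none outside, which is precisely the ``$n+1$ distinct gaps'' condition. The two routes are logically close (the alternation theorem is built from the same Markov-correction facts), but yours is more uniform in treating both directions through the alternation theorem, and it explicitly handles the case where $x_*$ shares a gap with a pole $\bc_j$ (which in the paper is implicitly disposed of via \eqref{it:noZeroGap} but not spelled out in the listed cases). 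The only bookkeeping to be careful about, which you correctly anticipate, is the telescoping identity $\sum_{j=1}^n\bigl(S_n(x_j)-S_n(x_{j+1})\bigr)=S_n(x_1)-S_n(x_{n+1})\le n$ together with each summand being odd and hence $\ge 1$; this is what pins down the count exactly and simultaneously shows $S_n(x_1)=n$, $S_n(x_{n+1})=0$, so that the unbounded gap contains no pole.
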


In particular, for the Chebyshev problem, $x_* = \bc_n$ so $F_n$ is always nonconstant.

These results will be proved in Section~\ref{sectionPropertiesFixedn}, along with additional properties of $F_n$ and its zeros. Let us assume that $F_n$ is not constant and recall that $(F_n)_\infty \le D_n^\infty$;  we call a point $x$  a ``generalized zero'' of $F_n$ if either $(F_n)_0(x)>0$ or if
\begin{align*}
D_n^\infty(x)-(F_n)_\infty(x) >0.
\end{align*}
Thus, this notion includes both actual zeros of $F_n$ and places where there is a reduction in the order of the pole compared to the maximal allowed order.  These generalized zeros are precisely counted by the divisor
\begin{align*}
D_n^0 :=(F_n)+D_n^\infty=(F_n)_0+ D_n^\infty-(F_n)_\infty.
\end{align*}

Since an alternation set is on $\E$, note that changing $x_*$ through a single gap only changes the alternation conditions up to an overall $j$-independent $\pm 1$ factor. Therefore, up to $\pm$ sign, the extremizer $F_n$ for Problem~\ref{prob3} is unchanged as $x_*$ varies through a single gap of $\E$. Thus, $F_n$ should be regarded as an extremal function of a gap, rather than of a single point. In particular, the Chebyshev extremizer for Problem~\ref{prob1} is the same (up to $\pm$ sign) as the residual extremizer for Problem~ \ref{prob2} for any $x_*$ in the gap containing $\bc_n$. Moreover, $F_n$ might even be extremal for more than one gap. This phenomenon is already known for the so-called Widom maximizer defined below, and is the content of the following corollary.

\begin{corollary}\label{cor:gapchange}
Let $F_n$ be an extremal function  for $x_*\in (\ba,\bb)$.
If  $(\ba_j,\bb_j)$ is a gap such that $|F_n(\ba_j)|=|F_n(\bb_j)|=1$ and $D_n=0$ on $(\ba_j,\bb_j)$, then up to a $\pm 1$ factor, $F_n$ is an extremal function for any $ x_*^j\in (\ba_j,\bb_j)$.
\end{corollary}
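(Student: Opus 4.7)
The plan is to apply Theorem~\ref{thmAlternation} in reverse: it suffices to exhibit a maximal alternation set for $\sigma F_n$ with respect to $x_*^j$, where $\sigma \in \{-1,+1\}$ is to be determined. First I would observe that the hypothesis $D_n = 0$ on $(\ba_j,\bb_j)$ forces $F_n$ to be real, continuous, and nonvanishing on the closed interval $[\ba_j,\bb_j]$, so combined with $|F_n(\ba_j)| = |F_n(\bb_j)| = 1$ one obtains $F_n(\ba_j) = F_n(\bb_j) =: \sigma \in \{-1,+1\}$.

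Normalizing via a $\PSL(2,\R)$ transformation so that $x_* = \infty$ (so the cyclic order becomes the linear order on $\R$), let $X = \{x_1 < \dots < x_{n+1}\}$ be an alternation set for $F_n$ with respect to $x_*$ provided by Theorem~\ref{thmAlternation}, and let $k$ be the index with $x_k \le \ba_j < x_*^j < \bb_j \le x_{k+1}$. I would produce the candidate new alternation set $Y$ by a single swap of one old alternation point for a gap endpoint of the new gap: set $Y := (X\setminus\{x_{n+1}\}) \cup \{\bb_j\}$ if $\bb_j \notin X$, and $Y := (X\setminus\{x_{n+1}\}) \cup \{\ba_j\}$ if $\bb_j = x_{k+1} \in X$. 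The remaining subcase $\ba_j = x_k$ and $\bb_j = x_{k+1}$ is ruled out because the old alternation identities at these two points would force the number of poles strictly in $(\ba_j,\bb_j)$ to be odd, contradicting $D_n = 0$.

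Verifying that $\sigma F_n$ alternates on $Y$ (cyclically reindexed relative to $x_*^j$) is a sign-parity computation. Writing $P_\infty, P_<, P_>$ for the pole multiplicities at $\infty$, strictly below $x_*^j$, and strictly above $x_*^j$ respectively (so $P_\infty + P_< + P_> = n$), one checks that for $x_i > x_*^j$ the shift $S_n^{\mathrm{new}}(x_i) - S_n(x_i) = P_\infty + P_<$ is constant, while for $x_i < x_*^j$ it equals $-P_>$. The naive reindexing $Y = X$ under the new cyclic order creates a one-bit parity mismatch between these two segments; the single swap described above precisely cancels that bit, reducing the required compatibility to the identity $P_\infty + P_< + P_> = n$. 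The alternation condition at the newly inserted endpoint is automatic once one computes $S_n^{\mathrm{new}}(\bb_j) = n$ (resp.\ $S_n^{\mathrm{new}}(\ba_j) = 0$), both immediate from $D_n^\infty = 0$ on the gap.

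The main obstacle is precisely this parity bookkeeping across the cyclic wraparound point and the identification that a single swap of an ``old'' alternation gap endpoint for a ``new'' one is the correct modification; once this is done, Theorem~\ref{thmAlternation} applied in reverse immediately yields that $\sigma F_n$ is extremal for $x_*^j$, which is the desired conclusion up to the overall $\pm 1$ sign $\sigma$.
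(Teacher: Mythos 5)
Your overall strategy matches the paper's: produce a maximal alternation set for $\sigma F_n$ relative to $x_*^j$ by swapping a gap endpoint of $(\ba_j,\bb_j)$ in for $x_{n+1}$, then invoke Theorem~\ref{thmAlternation} in the converse direction. The paper does the same swap and verifies the interlacing identities \eqref{eq:1Dec14}--\eqref{eq:2Dec14}, which is equivalent to your $S_n$-parity bookkeeping. However, there is a substantive error at the start that propagates through your parity computations.

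The hypothesis ``$D_n = 0$ on $(\ba_j,\bb_j)$'' means $D_n^0 = 0$ (no generalized zeros), not $D_n^\infty = 0$ (no poles); this is made explicit in the paper's proof. Under the correct reading, $F_n$ may have poles with full multiplicity in $(\ba_j,\bb_j)$, so $F_n$ need not be continuous on $[\ba_j,\bb_j]$, and your deduction that $F_n(\ba_j) = F_n(\bb_j)$ fails. The correct relation, which the paper uses as the crux of the argument, is
\[
\frac{F_n(\ba_j)}{F_n(\bb_j)}=(-1)^{\sum_{\bc\in(\ba_j,\bb_j)}D_n^\infty(\bc)},
\]
and this sign can be $-1$. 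The misreading also vitiates your claimed values $S_n^{\mathrm{new}}(\bb_j)=n$ and $S_n^{\mathrm{new}}(\ba_j)=0$, since poles inside $(\ba_j,\bb_j)$ contribute to the sign function: one actually gets $S_n^{\mathrm{new}}(\bb_j)=n-\sum_{\bc\in(x_*^j,\bb_j)}D_n^\infty(\bc)$, and the missing summand is precisely what must be matched against the identity displayed above at the seam of the cyclic reindexing. Your exclusion of the subcase $\ba_j = x_k$, $\bb_j=x_{k+1}$ has the same issue: the contradiction you derive is valid, but for the wrong reason --- the $+1$ from the alternation relation \eqref{eq:1Dec14} forces $(-1)=1$ regardless of the pole count, not because zero is an even number. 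Finally, you do not establish that at least one of $\ba_j, \bb_j$ actually belongs to the old alternation set $X$; the paper gets this from Theorem~\ref{thm:MarkovCorrection}\eqref{it:ExtrMonotonic}, which shows $\ba_j,\bb_j$ are the only $\pm 1$-points in the relevant interval $(y_i,y_{i+1})$ between consecutive generalized zeros, so the unique alternation point there must be one of them. Fixing these points requires replacing the continuity/$F_n(\ba_j)=F_n(\bb_j)$ step by the interlacing identity and redoing the $S_n$ arithmetic to absorb the pole contributions from the new gap.
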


From $\deg (F_n)_0= \deg (F_n)_\infty$ it follows that
\begin{align}\label{eq:degDivZero}
\deg D_n^0=\deg D_n^\infty=n
\end{align}
so we can define the normalized pole counting measure
\begin{equation}\label{7oct2}
\mu_n:= \frac 1n \sum_{\bc} D_n^\infty(\bc) \delta_{\bc}
\end{equation}
and normalized generalized zero counting measure
\begin{equation}\label{nun}
\nu_n:= \frac 1n \sum_{\bc} D_n^0(\bc) \delta_{\bc} .
\end{equation}

In Section 3, we consider the asymptotics of the extremal rational functions as $n \to \infty$, extending results about root asymptotics from the polynomial setting.  For a sequence of divisors $D_n^\infty$ as in Problem \ref{prob3} we define
\begin{align*}
K_\bC=\overline{\bigcup_{n\geq 1}\supp D_n^\infty}
\end{align*}
and assume that
\begin{align}\label{7oct1}
K_{\bC}\cap\E=\emptyset
\end{align}
and that that the sequence has a limiting distribution, i.e., that the normalized pole counting measures $\mu_n$ have a weak limit $\mu$ in the topology dual to $C(\overline{\bbR})$. A similar combination of assumptions, but with poles away from the convex hull of $\E$, is used in \cite[Chapter 6]{StahlTotik92} to study rational interpolation. Some of our current work mirrors our work for orthogonal rational functions \cite{EichLukYou}, but that work required a periodic sequence of poles. In this sense, in addition to studying a different extremal problem, our current setting is more general. To the best of our knowledge all previous works also assumed that the sequence of divisors $D_n^\infty$ is monotonic. Let further $(x_n^*)_{n=0}^\infty$ be a sequence in $\overline \bbR\setminus \E$ which does not accumulate on $\E$.

The behavior of $\log \lvert F_n\rvert$ is governed by the zero and pole distributions.  This corresponds to two Riesz representations, with $\log \lvert F_n \rvert$ superharmonic (respectively, subharmonic) away from the set of zeros (respectively, poles). The limiting pole distribution $\mu$ directly determines the root asymptotics of the functions $F_n$ and the limiting zero distribution.

We assume that $\E$ is not a polar set, i.e., the domain $\Omega = \overline{\bbC} \setminus \E$ is Greenian, and we denote by $G(z,w) = G_\E(z,w)$ the Green function and by $\omega_\E(dz,x)$ harmonic measure for this domain.

\begin{theorem}[Root asymptotics] \label{thm:RootAsymptotics}
Assume that $\E$ is not a polar set, \eqref{7oct1} holds,  the measures $\mu_n$ converge weakly to $\mu$ in the topology dual to $C(\overline{\bbR})$ and $(x_n^*)_{n=0}^\infty$ be a sequence in $\overline\R\setminus \E$ not accumulating on $\E$. Then uniformly on compact subsets of $\bbC\setminus\bbR$,
\begin{align*}
\lim\limits_{n\to\infty}\frac{1}{n}\log \lvert F_n(z) \rvert = \int G_\E(z,x)d\mu(x).
\end{align*}
Moreover,
\[
\wlim_{n\to\infty} \nu_n = \int \omega_\E(dz,x) d\mu(x).
\]
\end{theorem}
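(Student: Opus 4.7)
The plan is to prove both statements in parallel, combining a Bernstein--Walsh-type upper bound (from the normalization $\|F_n\|_\E\le 1$) with a potential-theoretic compactness-and-identification argument, the latter anchored by the extremal value $\tfrac1n\log m_n(x^*_n)$ at the reference point.

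For the upper bound I would consider
\[
\psi_n(z) = \log|F_n(z)| - \sum_{k=1}^n G_\E(z,\bc_k).
\]
Since $(F_n)_\infty\le D_n^\infty$, the logarithmic singularities at each $\bc_k$ cancel and $\psi_n$ is subharmonic on $\overline\C\setminus\E$. On $\E$ one has $\log|F_n|\le 0$ and $G_\E(\cdot,\bc_k)=0$ quasi-everywhere, so the generalized maximum principle on the Greenian domain $\overline\C\setminus\E$ yields $\psi_n\le 0$, i.e.\
\[
u_n(z):=\tfrac1n\log|F_n(z)| \le \int G_\E(z,x)\,d\mu_n(x).
\]
Because $K_\bC\cap\E=\emptyset$, $G_\E(z,\cdot)$ is continuous on $K_\bC$ for each $z\in\C\setminus\E$, so the weak convergence $\mu_n\to\mu$ gives the pointwise limit $\int G_\E(z,x)\,d\mu_n(x)\to v(z):=\int G_\E(z,x)\,d\mu(x)$, and Hartogs's lemma upgrades this to $\limsup_n u_n\le v$ locally uniformly on compact subsets of $\C\setminus(K_\bC\cup\E)$.

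Since all zeros and poles of $F_n$ lie on $\overline\R$, the function $u_n$ is harmonic on $\C\setminus\overline\R$ and subharmonic on $\C\setminus K_\bC$, with the above local upper bound. Provided $u_n$ does not diverge to $-\infty$ uniformly on compacts (ruled out below by the lower bound at $x^*_n$), Brelot--Hartogs precompactness produces, for any subsequence, an $L^1_{\loc}(\C\setminus K_\bC)$-convergent sub-subsequence $u_{n_k}\to u^*$ with $u^*$ subharmonic and $u^*\le v$; on $\C\setminus\overline\R$ the convergence is locally uniform and $u^*$ is harmonic. Passing to a further subsequence, $\nu_n\to\nu^*$ weakly on $\overline\R$, and distributional Laplacians give Riesz measure $\nu^*-\mu$ for $u^*$, so that $w:=v-u^*\ge 0$ is a non-negative harmonic function on $\C\setminus\overline\R$.

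To force $w\equiv 0$, I would exploit the extremal property at the reference point: $u_n(x^*_n)=\tfrac1n\log m_n(x^*_n)$ (or the analogous leading-coefficient quantity when $d_n>0$). The Bernstein--Walsh bound already gives $\tfrac1n\log m_n(x^*_n)\le\int G_\E(x^*_n,x)\,d\mu_n(x)\to v(x^*_\infty)$ along a subsequence with $x^*_n\to x^*_\infty\in\overline\R\setminus\E$. A matching lower bound $\liminf_n\tfrac1n\log m_n(x^*_n)\ge v(x^*_\infty)$ is obtained by constructing explicit competitors in $\cL_n$ with supremum norm at most $1$ on $\E$ and value at $x^*_n$ asymptotically $\exp(n v(x^*_n))$: these are Blaschke-type single-valued modifications of the multi-valued analytic function whose modulus equals $\exp(\sum_k G_\E(\cdot,\bc_k))$, built from the gap structure of $\E$, in the spirit of \cite{StahlTotik92,LukashovJAT98}. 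Combined with Corollary~\ref{cor:gapchange} (which makes $F_n$ extremal on the entire gap containing $x^*_n$), this pins down $u^*=v$ on an open subset of the reference gap, and Schwarz reflection across this open interval forces $w\equiv 0$ on $\C\setminus\overline\R$. Hence every subsequential limit equals $v$, the full sequence converges, and harmonicity on $\C\setminus\R$ upgrades $L^1_{\loc}$ convergence to locally uniform convergence. Taking distributional Laplacians, using $\tfrac{1}{2\pi}\Delta v = \int\omega_\E(\cdot,x)\,d\mu(x)-\mu$ and $\mu_n\to\mu$, delivers $\wlim_n\nu_n=\int\omega_\E(\cdot,x)\,d\mu(x)$. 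The main obstacle is the lower-bound construction for $m_n(x^*_n)$ without any Szeg\H o--Widom assumption on $\E$; the remaining steps are standard potential-theoretic bookkeeping.
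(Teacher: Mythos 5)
Your upper bound is the same as the paper's (Lemma~\ref{lem:BernsteinWalsh} plus weak convergence of $\mu_n$, yielding Lemma~\ref{lem:uniformBound}), but your lower bound takes a fundamentally different route, and this is where the proposal has a genuine gap.

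For the lower bound, you propose constructing explicit competitors in $\cL_n$ whose value at $x_n^*$ is asymptotically $\exp\bigl(n\int G_\E(x_n^*,x)\,d\mu_n(x)\bigr)$, then pinning down the subsequential limit $u^*$ at $x_\infty^*$ and propagating equality to all of $\bbC\setminus\bbR$ by the minimum principle for the nonnegative harmonic function $w=v-u^*$ (the last step is sound since $w$ extends harmonically across the open gap containing $x_\infty^*$, thanks to Theorem~\ref{thm:MarkovCorrection}\eqref{it:noZeroGap} and the standing hypotheses on $\mu$ and $(x_n^*)$). But the competitor construction --- which you yourself flag as the main obstacle --- is the hard part, and it is not clear how to carry it out at the stated level of generality. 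A ``Blaschke-type single-valued modification'' of the multivalued function with modulus $\exp\bigl(\sum_k G_\E(\cdot,\bc_k)\bigr)$ requires cancelling the monodromy, i.e.\ adjusting periods, and for an arbitrary nonpolar $\E$ with infinitely many gaps one has no guarantee that the necessary Blaschke products exist or that a finite-gap approximation can be made uniform enough. In the $H^\infty$ setting the existence of nontrivial bounded character-automorphic functions is exactly Widom's theorem, which needs the Parreau--Widom condition; Theorem~\ref{thm:RootAsymptotics} assumes only that $\E$ is nonpolar.

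The paper avoids constructing competitors altogether. The key move is that the extremizer $F_n$ itself admits the comb representation
\[
F_n(z)=\tfrac12\bigl(B_n(z)+B_n(z)^{-1}\bigr),
\]
with $B_n$ the product of complex Green functions of the $n$-extension $\E_n=F_n^{-1}([-1,1])$ (Theorem~\ref{thm:CombRep}); this is single-valued because $\E_n$ is automatically a finite union of intervals (Theorem~\ref{thm:preimage}), with no appeal to Widom theory. From $\lvert B_n\rvert\le 1$ and Lemma~\ref{lem:zeroLimit} (showing $\log\lvert 1+B_n^2\rvert\to 0$, which only needs $\lvert B_n\rvert\to 0$ --- a soft estimate obtained by counting poles accumulating in a single gap), one gets the much weaker-looking bound $\liminf_n h_n\ge 0$ (Lemma~\ref{lem:positive}). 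This is already enough: combined with the potential representation of subsequential limits (Theorem~\ref{thm:June9}), the limit $h$ is a positive superharmonic function on $\Omega$ with $-\Delta h=2\pi\mu$, and the Riesz decomposition $h=\int G_\E(\cdot,x)\,d\mu(x)+u$ with greatest harmonic minorant $u\ge 0$ delivers $h\ge\int G_\E(\cdot,x)\,d\mu(x)$ (Lemma~\ref{lemma39}). So the Riesz decomposition does the work you wanted your competitors to do, and the whole argument stays inside the nonpolar hypothesis. The zero-counting conclusion is then the same in both approaches --- taking distributional Laplacians and applying Fubini to identify $\frac1{2\pi}\Delta\int G_\E(\cdot,x)\,d\mu(x)=\int\omega_\E(\cdot,x)\,d\mu(x)$.

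In short: your outline is structurally plausible, but the lower-bound competitor construction is not supplied and is likely not available under the theorem's assumptions. The fix is to replace it with the representation of $F_n$ via its own $n$-extension plus the Riesz decomposition argument, which is both simpler and works without any Parreau--Widom hypothesis.
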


Our proof of root asymptotics relies on an explicit representation of $F_n$ in terms of the so-called $n$-extension $\E_n=F_n^{-1}([-1,1])$. Representations of this type appear for instance in \cite{ChrSimZin,SodYud93}. In particular, using $\E\subset\E_n$ and monotonicity of the Green function, we obtain a Bernstein-Walsh type upper bound for $F_n$ in terms of the Green functions $G_\E(z,\bc)$. This is the major difference between the $L^2$ and the $L^\infty$ setting. In the $L^2$ setting \cite{EichLukYou} an asymptotic upper bound is equivalent to Stahl--Totik regularity of the measure, whereas in the $L^\infty$ setting this bound holds for any $n$.

 As in \cite[Corollary 1.2]{Simon07}, this can be used to describe the behavior of the leading coefficient.

Theorem~\ref{thm:RootAsymptotics} generalizes known polynomial results, which correspond to the degenerate pole distribution $\mu = \delta_\infty$. Another notable case, related to \cite{EichLukYou}, is of a $p$-periodically repeating sequence of poles $\mu= \frac 1p \sum_{j=1}^p \delta_{\bc_j}$.

In Section \ref{sec:SzegoWidom}, we prove so-called Szeg\H o-Widom asymptotics for $F_n$. To the best of our knowledge, all previous results are only for polynomial extremal problems. Let $\Omega$ be a domain in $\overline{\bbC}$ which contains $\infty$ and  $\E=\partial \Omega$ be an analytic Jordan curve, $T_n$ the associated Chebyshev polynomial  and $B_\E$ denote the Riemann map that maps $\Omega\to\D$ and $B_\E(\infty)=0$, normalized so that $\lim\limits_{z\to\infty}zB_\E(z)>0$.  Faber \cite{Fab20} showed that uniformly on compact subsets of  $\Omega$
\begin{align}\label{eq:Faber}
\lim_{n\to\infty}T_nB_\E^n=1.
\end{align}
In his landmark paper \cite{Widom69}, Widom generalized this notion to multiply connected domains. In the following let $\Omega$  be a domain in $\overline{\bbC}$ which contains $\infty$ so that $\E=\partial\Omega$ is not polar. We will describe the type of results for multiply connected domains, but refer the reader for the precise definitions and statements to Section \ref{sec:SzegoWidom}. The correct analog for the Riemann map for multiply connected domains is the so-called complex Green function
\begin{align}\label{eq:ComplexGreen}
B_\E(z,\infty)=e^{-G_\E(z,\infty)-i\widetilde{G_\E(z,\infty)}},
\end{align}
where $\widetilde{G_\E(z,\infty)}$ denotes the harmonic conjugate of $G_\E(z,\infty)$. To be more precise, since $G_\E(z,\infty)$ is harmonic, $B_\E(z,\infty)$ is first defined locally and then using the monodromy theorem \cite[Theorem 11.2.1]{SimonBasicCompAna} extended to a global multivalued analytic function in $\Omega$. Due to the multivaluedness of $B_\E$, one cannot expect that $B_\E^nT_n$ converges to a single analytic function as in \eqref{eq:Faber}. For this reason, Widom considered a related character automorphic extremal problem. Let $z_0\in\Omega$ and let $\pi_1(\Omega,z_0)$ denote the fundamental group of $\Omega$ with basepoint fixed at $z_0$, and $\pi_1(\Omega)^*$  the group of unitary characters of $\pi_1(\Omega,z_0)$; that is, group homomorphisms from $\pi_1(\Omega,z_0)$ into $\T := \R/\Z$.  If $F$ is an analytic function on $\Omega$, then we call $F$ ($\pi_1(\Omega)^*$-) character-automorphic with character $\a$, if
\begin{align*}
F\circ\tilde\g =e^{2\pi i \a(\tilde\g)}F,\quad \forall\tilde\g\in\pi_1(\Omega,z_*).
\end{align*}
Let $H^\infty_\Omega(\a)$ denote the space of analytic character-automorphic functions, $F$, in $\Omega$ which are uniformly bounded, i.e.,
\begin{align}\label{eq:HinftyNorm}
\|F\|_{\Omega}:=\sup_{z\in\Omega}|F(z)|<\infty.
\end{align}
In his `69 paper \cite{Widom69}, Widom considered the extremal problem
\begin{align}\label{eq:HinftyProblem}
\sup\{\Re F(x_*): F\in H^\infty_\Omega(\a), \|F\|_{\Omega}\leq 1\}
\end{align}
under the assumption that $\E$ is a finite union of $C^2$ Jordan curves and arcs and showed existence and uniqueness of the extremizer; let us call this the Widom maximizer. Let $\chi_n$ denote the character of $B_\E^n$ and $W_n$ the Widom maximizer with character $\chi_n$ for the extremal point $x_*=\infty$. If $\E$ is the finite union of $C^2$ Jordan curves, Widom showed that uniformly on compact subsets of $\Omega$
\begin{align*}
B_\E^nT_n-W_n\to 0.
\end{align*}
If such type of convergence holds, we say $T_n$ has Szeg\H o-Widom asymptotics. The cases of arcs turned out to be essentially harder and for non-real problems only very simple cases such as one arc of the unit circle \cite{EichingerJAT17} are known. If $\E\subset\R$ the situation is essentially better, since in this case there are many symmetry properties, which manifests in the fact that the extremal function is real and allows for the explicit representation of the type we will derive in \eqref{eq:extremalFunctionRep}. If $\E$ is a finite union of intervals Christiansen, Simon and Zinchenko \cite{ChrSimZin} showed that $T_n$ has Szeg\H o-Widom asymptotics.  In 1971 Widom \cite{Widom71} also showed that \eqref{eq:HinftyProblem} has a non-trivial solution as long as $\Omega$ is of Parreau--Widom type. We will define this notion in Section \ref{sec:SzegoWidom}, but mention at this place that it also includes infinitely connected domains. Recently Christiansen, Simon, Yuditskii and Zinchenko \cite{ChriSiZiYuDuke} proved Szeg\H o-Widom asymptotics for $T_n$ if $\E\subset\R$ such that $\Omega$ is a regular Parreau--Widom domain with Direct Cauchy theorem and this was later also proved under the same assumptions for residual polynomials \cite{ChrSimZin}.

We point out that
\begin{align*}
(T_n)_\infty=n(B_\E(\cdot,\infty))_0,
\end{align*}
which makes $B_\E^nT_n$ analytic and in fact a normal family. Since by definition
\begin{align*}
(F_n)_\infty\leq D_n^\infty,
\end{align*}
 in our setting $B_\E^n$ should be substituted by the product of complex Green functions associated to the divisor $D_n^\infty$, i.e.
\begin{align}\label{eq:BlaschkeProduct}
B_\E^{(n)}(z)=e^{i\phi_n}\prod_{\bc}D_n^\infty(\bc)B_\E(z,\bc),
\end{align}
where
\begin{align}\label{eq:ComplexGreenGeneral}
B_\E(z,\bc)=e^{-G_\E(z,\bc)-i\widetilde{G_\E(z,\bc)}}
\end{align}
and the phase will be specified in Section \ref{sec:SzegoWidom}. With this modification we prove:
\begin{theorem}
Let $\Omega=\overline{\bbC}\setminus\E$ be a regular Parreau--Widom domain so that the Direct Cauchy theorem holds in $\Omega$. Assume further that  \eqref{7oct1} holds,  the measures $\mu_n$ converge weakly to $\mu$ in the topology dual to $C(\overline{\bbR})$ and $(x_n^*)_{n=0}^\infty$ be a sequence in $\overline\R\setminus \E$ without accumulation points in $\E$. Then $F_n$ admits Szeg\H o-Widom asymptotics.
\end{theorem}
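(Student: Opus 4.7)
The plan is to adapt the approach of Christiansen--Simon--Zinchenko \cite{ChrSimZin} and Christiansen--Simon--Yuditskii--Zinchenko \cite{ChriSiZiYuDuke} from the polynomial case to the rational setting, using Theorem~\ref{thm:RootAsymptotics} for a priori upper bounds and carefully tracking the characters attached to $B_\E^{(n)}$ as $n\to\infty$.

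First I set up a normal family. The bound $(F_n)_\infty\le D_n^\infty$ guarantees that every pole of $F_n$ is absorbed by a zero of $B_\E^{(n)}$, so $g_n:=B_\E^{(n)}F_n$ is a single-valued analytic function on $\Omega$ which is character-automorphic with some character $\chi_n\in\pi_1(\Omega)^*$ determined by $D_n^\infty$ and the phase $\phi_n$. Using $|B_\E^{(n)}|\le 1$ on $\Omega$ with boundary modulus $1$ quasi-everywhere on $\E$, together with $\|F_n\|_\E\le 1$, the maximum principle yields $\|g_n\|_\Omega\le 1$, so $\{g_n\}$ is normal.

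Second, I pass to limits of the data. By passing to a subsequence we may assume $x_n^*\to x_\infty^*\in\overline{\bbR}\setminus\E$. The generators of $\pi_1(\Omega)^*$ can be realized as periods of the harmonic conjugates $\widetilde{G_\E(\cdot,y)}$; these are linear in $y$ through the pole divisor, so the weak convergence $\mu_n\to\mu$ combined with the Parreau--Widom hypothesis (which ensures the periods are well-defined and summable) forces $\chi_n$ to converge in the compact group $\pi_1(\Omega)^*$ to some $\chi_\infty$. Under PW+DCT, the Widom extremal problem \eqref{eq:HinftyProblem} has a unique solution $W_n\in H^\infty_\Omega(\chi_n)$ for base point $x_n^*$ and character $\chi_n$, depending continuously on $(\chi_n,x_n^*)$; thus $W_n\to W_\infty$ on compact subsets of $\Omega$.

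Third, I identify subsequential limits of $g_n$. Since $g_n\in H^\infty_\Omega(\chi_n)$ with $\|g_n\|_\Omega\le 1$, it is admissible in the Widom problem for $(\chi_n,x_n^*)$ (after the appropriate normalization by the vanishing of $B_\E^{(n)}$ at $x_n^*$ when $d_n>0$), giving $\Re g_n(x_n^*)\le \Re W_n(x_n^*)$ in the appropriate sense. For the reverse inequality I approximate $W_n$ by products $B_\E^{(n)}P_n$ with $P_n\in\cL_n$: under DCT, the Widom maximizer admits a boundary-value representation whose approximation by rational functions with poles in $\supp D_n^\infty$ is $L^\infty(\E)$-tight, and this yields an admissible sequence for Problem~\ref{prob3} whose values at $x_n^*$ match $\Re W_n(x_n^*)$ up to $o(1)$. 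Combining both inequalities, any subsequential limit $g$ of $g_n$ lies in $H^\infty_\Omega(\chi_\infty)$, has norm $\le 1$, and attains the Widom extremal value at $x_\infty^*$; by uniqueness $g=W_\infty$, so the full sequence converges, which is Szeg\H o--Widom asymptotics.

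The main obstacle is step three---the DCT-based $L^\infty(\E)$ approximation of $W_n$ by elements of $B_\E^{(n)}\cL_n$---which is the rational analog of the Hardy-space density argument of \cite{ChriSiZiYuDuke,ChrSimZin}, but now with both the character $\chi_n$ and the divisor $D_n^\infty$ varying with $n$. A secondary subtle point, relying on PW+DCT machinery, is the continuity of the map $\mu_n\mapsto \chi_n$ under weak-$*$ convergence and the joint continuity of $(\chi,x_*)\mapsto W(\cdot;\chi,x_*)$; the crudely varying phase $\phi_n$ in \eqref{eq:BlaschkeProduct} must be chosen so as to make these continuities manifest.
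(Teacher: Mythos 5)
Your proposal takes a genuinely different route from the paper, and has two substantive problems.

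\textbf{The paper's approach.} The paper does not argue by duality or rational approximation. Instead it works with the explicit Joukowsky representation $F_n=\tfrac12(B_n+B_n^{-1})$ from Theorem~\ref{thm:CombRep}, where $B_n$ is the Blaschke product over the $n$-extension $\E_n=F_n^{-1}[-1,1]$. The key technical result is Theorem~\ref{thm:harmonic}, which proves that
\[
\log|F_{n_\ell}(z)|-\sum_{\bc}D^\infty_{n_\ell}(\bc)G_\E(z,\bc)\to -\log 2-\sum_t D(t)G_\E(z,t)
\]
by comparing Green functions of $\E$ and $\E_{n_\ell}$ through Koosis' formula (Lemma~\ref{lem:KoosisAnalog}), showing the gap extensions shrink (Lemma~\ref{lem:shrinking}), and using the Parreau--Widom condition to interchange the gap sum with the limit. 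One then upgrades from $\log|Q_n|$ to $Q_n$ via Proposition~\ref{prop:convergence}, identifies the subsequential limits as Blaschke products $\tfrac12 B_\E(z,D)$, and finally uses DCT only at the end, through Theorem~\ref{eq:thmWidomMax}, to recognize $B_\E(z,D)$ as the Widom maximizer for the limit gap.

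\textbf{A concrete error in your step two.} You assert that weak convergence $\mu_n\to\mu$, together with the Parreau--Widom summability, forces $\chi_n$ to converge in $\pi_1(\Omega)^*$. This is not correct. The period of $\chi_n$ over a generator $\tilde\gamma_j$ is $\sum_{\bc}D_n^\infty(\bc)\,\omega_\E(\E^j,\bc)=n\int\omega_\E(\E^j,\cdot)\,d\mu_n \pmod 1$. Even when $\int\omega_\E(\E^j,\cdot)\,d\mu_n\to c_j$, the sequence $n c_j\pmod 1$ does not converge unless $c_j\in\bbQ$, so the whole sequence $\chi_n$ generally does not converge; one can only extract convergent subsequences (as in the polynomial case, where $\chi_n=n\chi_\infty$ equidistributes). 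In the paper, the limiting character $\alpha$ is never obtained from $\mu_n\to\mu$; it is identified as the character of the Blaschke product $B_\E(\cdot,D)$ for the subsequential limit divisor $D=\lim D_{n_\ell}$, which emerges from the quantitative Green function asymptotics, not from $\mu_n$ directly.

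\textbf{The gap in your step three.} Your lower bound relies on a ``DCT-based $L^\infty(\E)$ approximation of $W_n$ by elements of $B_\E^{(n)}\cL_n$,'' which you yourself label the main obstacle and do not supply. This is not a routine adaptation: the spaces $\cL_n$ are $(n+1)$-dimensional with $n$-dependent poles, so the density statement must be uniform in $n$ as both $\chi_n$ and $D_n^\infty$ vary, and it is exactly the kind of fine point the paper's approach is engineered to avoid. As written, the proposal therefore replaces the hard part of the proof with a promise, and that promise is not obviously deliverable by the polynomial-case arguments of \cite{ChriSiZiYuDuke,ChrSimZin} without substantial new work.

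\end{document}
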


We want to highlight that this generalizes the known results in several ways. First of all, polynomials correspond to the case that $D_n^\infty=n\chi_{\{\infty\}}$ and so the class of functions that we allow is more general. Secondly, we allow for a sequence of extremal points $x_n^*$, which in particular means that depending on $n$, $F_n$ might be a residual or a Chebyshev maximizer.

\section{Properties of the extremal rational functions}  \label{sectionPropertiesFixedn}

In this section we study the extremal functions for fixed $n$. Let us begin by acknowledging that their existence follows by usual arguments. Namely, the leading coefficient $\lambda_n$ and the value $F_n(x_*)$ are continuous functions of polynomial coefficients of $F_n R_n$. Since $\cL_n$ is finite-dimensional,  the norm $\lVert \cdot \rVert_\E$ is mutually equivalent with a norm made from the polynomial coefficients, so Problem~\ref{prob3} is an extremal problem for continuous maps on the compact unit ball $\lVert \cdot \rVert_\E \le 1$.

Next, we describe the behavior of extremal functions under $\PSL(2,\bbR)$ transformations. This will require the following claim from \cite{SodYud93}, for which we provide a short proof.

 \begin{lemma}\label{lem:crossratio}
For every $z_0 \in \bbC \setminus \bbR$ and $x \in \overline\bbR$, there exists $t\in \overline\R$ such that
$\max_{z\in \overline\R}\left|\frac{(z-t)(x-z_0)}{(z-z_0)(x-t)}\right|=1$ and
$z=x$ is the unique maximum.
\end{lemma}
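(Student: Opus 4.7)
The plan is to recognize the modulus of the expression inside the maximum as $\sqrt{g(z)/g(x)}$, where $g(z):=|z-t|^2/|z-z_0|^2$, so that the claim becomes $g(z)\le g(x)$ for all $z\in\overline{\R}$, with equality only at $z=x$. Writing $z_0=a+ib$ with $b\ne 0$, one has $g(z)=(z-t)^2/((z-a)^2+b^2)$ on $\R$ and $g(\infty)=1$. The problem thus reduces to maximizing an explicit real rational function $g$ on $\overline{\R}$ and choosing the parameter $t\in\overline{\R}$ so that the location of the maximum is precisely $z=x$.

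A short differentiation identifies the real critical points of $g$: for $t\ne a$ they are $z=t$ (where $g=0$) and $z=z^{*}:=a+b^2/(a-t)$. A direct evaluation gives $g(z^{*})=1+(a-t)^2/b^2>1=g(\infty)$, so the maximum of $g$ on $\overline{\R}$ is attained uniquely at $z^{*}$. In the degenerate case $t=a$, the only real critical point is $z=a$ (a zero of $g$), the strict inequality $g<1$ holds throughout $\R$, and the maximum on $\overline{\R}$ is uniquely at $\infty$.

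Accordingly, I would choose $t$ case by case. For $x\in\R\setminus\{a\}$ set $t:=a-b^2/(x-a)$, which forces $z^{*}=x$. For $x=\infty$ set $t:=a$. For $x=a$ set $t:=\infty$ and interpret the original ratio as its limit $(x-z_0)/(z-z_0)$, whose modulus $b/|z-z_0|$ is $\le 1$ with equality only at $z=a=x$. In each case, dividing $g(z)\le g(x)$ through by $g(x)$ and taking square roots yields the asserted identity.

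The computation itself is routine one-variable calculus, so the only point requiring care is the projective bookkeeping of the degenerate configurations $x\in\{a,\infty\}$ and $t=\infty$; these are transparently resolved by taking the appropriate limits in the explicit formula $t=a-b^2/(x-a)$, rather than by any genuine analytic obstruction.
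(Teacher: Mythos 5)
Your proof is correct, but it takes a genuinely different route. The paper transports the whole expression through a M\"obius map $f$ sending $\overline{\R}$ to $\partial\D$ with $f(z_0)=0$: invariance of the cross-ratio collapses the quantity to $\lvert f(z)-f(t)\rvert/\lvert f(x)-f(t)\rvert$, and choosing $f(t)=-f(x)$ turns the claim into the elementary bound $\lvert f(z)+f(x)\rvert \le 2$ on the unit circle, with equality iff $f(z)=f(x)$. Uniqueness and the existence of $t\in\overline{\R}$ are then automatic from the geometry, and the degenerate configurations $x\in\{a,\infty\}$, $t=\infty$ need no separate discussion. You instead maximize $g(z)=\lvert z-t\rvert^2/\lvert z-z_0\rvert^2$ directly by one-variable calculus, solve $g'(z)=0$ to locate the interior critical point $z^*=a+b^2/(a-t)$, verify $g(z^*)=1+(a-t)^2/b^2>1=g(\infty)>0=g(t)$ so the maximum is unique, and then choose $t=a-b^2/(x-a)$ to place $z^*$ at $x$; the cases $x=\infty$ and $x=a$ are handled as limits $t=a$ and $t=\infty$. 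This is fully rigorous and has the advantage of producing an explicit formula for $t$, at the cost of carrying out a computation and separately auditing the degenerate cases and the uniqueness of the critical point. The paper's route gets uniqueness and the projective bookkeeping for free from conformal invariance, which is in the same spirit as the rest of the paper, but gives $t$ only implicitly.
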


\begin{proof}
  Let $f$ be a M\"obius transformation mapping $\overline{\R}$ to
  $\partial\mathbb{D}$ with $f(z_0)=0$. Since M\"obius transformations preserve cross-ratios,
  \[
  \left|\frac{(x-z_0)(z-t)}{(x-t)(z-z_0)}\right|=\left|\frac{f(x)(f(z)-f(t))}{(f(x)-f(t))f(z)}\right|=\left|\frac{f(z)-f(t)}{f(x)-f(t)}\right|.
  \]
  By choosing $t$ so that $f(t) = -f(x)$, we have
\[
\left|\frac{(x-z_0)(z-t)}{(x-t)(z-z_0)}\right| =  \left|\frac{f(z)-f(t)}{f(x)-f(t)}\right|=\frac {|f(z)+f(x)|}2 \leq 1
\]
with equality if and only if $f(z) = f(x)$, i.e., $z=x$.
\end{proof}

In the next lemma, we consider the effect of a conformal transformation on the extremal problems, so we will emphasize dependencies on the poles, the point $x_*$ and the set $\E$ where appropriate.
We denote by $F_n(z,\E, D_n^\infty;x_*)$ a maximizer for \eqref{eq:ExtremalProblem}, and by $\cL(D_n^\infty)$ the space defined in \eqref{eq:LDivisor}. For a divisor $D$ and a a conformal map $f \in \PSL(2,\bbR)$ we define the pushforward $f_*D=D\circ f^{-1}$. Lemma~\ref{lem:conformalinvar} is an analog of \cite[Lemma 2.1]{EichLukYou} adapted to the $L^\infty$ extremal problem \eqref{eq:ExtremalProblem}.

We would like to claim that the extremizers move by a conformal map $f \in \PSL(2,\bbR)$ by
\begin{align*}
F_n(f(z),f(\E),f_*D_n^\infty;f(x_*)) = F_n(z,\E,D_n^\infty;x_*).
\end{align*}
However, this statement would be ambiguous until we prove uniqueness of extremizers, so we have to formulate the claim more carefully:

\begin{lemma}\label{lem:conformalinvar}
Let $f \in \PSL(2,\bbR)$ and let $F_n(z,f(\E),f_*D_n^\infty,f(x_*))$ be a maximizer of \eqref{eq:ExtremalProblem} for $f_*D_n^\infty $, $f(\E)$ and $f(x_*)$.
Then $F_n(f(z),f(\E),f_*D_n^\infty,f(x_*))$ is a maximizer for \eqref{eq:ExtremalProblem} for $D_n^\infty$, $\E$ and $x_*$.
\end{lemma}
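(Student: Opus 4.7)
The plan is to set $H_n(z) := G_n(f(z))$, where $G_n := F_n(\cdot, f(\E), f_*D_n^\infty, f(x_*))$ is the given maximizer of the transformed problem, and verify three things: $H_n \in \cL(D_n^\infty)$, $\|H_n\|_\E \le 1$, and $H_n$ maximizes the extremal functional at $x_*$. Admissibility is immediate: the polar divisor transforms as $(G_n\circ f)_\infty(z) = (G_n)_\infty(f(z))$, and since $(G_n)_\infty \le f_*D_n^\infty$, evaluating at $f(z)$ gives $(G_n)_\infty(f(z)) \le (f_*D_n^\infty)(f(z)) = D_n^\infty(z)$, so $H_n\in\cL(D_n^\infty)$. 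For the norm, $\|H_n\|_\E = \sup_{z\in\E}|G_n(f(z))| = \sup_{w\in f(\E)}|G_n(w)| = \|G_n\|_{f(\E)} \le 1$.

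The heart of the argument is the transformation law for the extremal functional $\Lambda(F,x_*) := \lim_{x\to x_*} F(x)/r(x,x_*)^{d_n}$ under composition with $f$. Writing $f(z) = (az+b)/(cz+d)$ with $ad-bc=1$, the identity $f(x_*)-f(x) = (x_*-x)/((cx_*+d)(cx+d))$ yields, in the generic case $x_*, f(x_*)\ne\infty$,
\[
\Lambda(H_n, x_*) = (cx_*+d)^{2d_n}\,\Lambda(G_n, f(x_*)).
\]
The scalar $(cx_*+d)^{2d_n} = f'(x_*)^{-d_n}$ is strictly positive, since $\PSL(2,\bbR)$ is orientation-preserving on $\overline{\bbR}$. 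The degenerate cases $x_* = \infty$, $f(x_*) = \infty$, or $d_n=0$ are entirely analogous once the limit is read in the appropriate local chart at $\infty$; in each case the scaling factor is a positive real number depending only on $f$ and $x_*$, which I denote $c_f(x_*)$.

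For any admissible competitor $\tilde H \in \cL(D_n^\infty)$ with $\|\tilde H\|_\E \le 1$ for the original problem, the reverse argument applied to $\tilde G := \tilde H \circ f^{-1}$ shows that $\tilde G \in \cL(f_*D_n^\infty)$ is admissible for the $f$-problem and satisfies $\Lambda(\tilde H, x_*) = c_f(x_*)\,\Lambda(\tilde G, f(x_*))$. Maximality of $G_n$ then yields
\[
\Re \Lambda(\tilde H, x_*) = c_f(x_*)\,\Re \Lambda(\tilde G, f(x_*)) \le c_f(x_*)\,\Re \Lambda(G_n, f(x_*)) = \Re \Lambda(H_n, x_*),
\]
so $H_n$ is a maximizer. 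The only real subtlety is the case analysis for the transformation law at $\infty$; the positivity of the scaling factor, which is the essential ingredient that makes maximality transfer, is a direct consequence of $f$ being an orientation-preserving real Möbius transformation.
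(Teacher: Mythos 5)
Your proposal is correct and follows essentially the same strategy as the paper: establish admissibility of the pullback via the divisor transformation, preserve the sup-norm, and transfer maximality through the positivity of the scaling constant relating the two extremal functionals. The only cosmetic difference is that you compute the constant $c_f(x_*)=f'(x_*)^{-d_n}$ directly from the explicit form of a general Möbius map, whereas the paper verifies the corresponding divisor identities \eqref{eq:8sept1}--\eqref{eq:8sept2} on the generators (inversions and affine maps) of $\PSL(2,\bbR)$ and composes.
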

\begin{proof}
M\"obius transformations preserve zeros and their multiplicity, i.e., for any rational function $F$ and any $w\in\overline{\bbC}$,
\begin{align*}
f_*^{-1}(F)_w=(F\circ f)_w.
\end{align*}
Therefore, since pushforwards of integral divisors are integral, it follows from \eqref{eq:LDivisor} that
  \begin{align}\label{eq:inclusion}F\in \mathcal{L}(f_*D_n^\infty)\implies F\circ f\in \mathcal{L}(D_n^\infty).\end{align}
  In particular, $F_n(f(z),f(\E),f_*D_n^\infty,f(x_*))\in \mathcal{L}(D_n^\infty)$.
  Since $F_n(z,f(\E),f_*D_n^\infty,f(x_*))$ solves the extremal problem on $f(\E)$, we have
  \[\|F_n(f(\cdot),f(\E),f_*D_n^\infty,f(x_*))\|_\E=\|F_n(\cdot,f(\E),f_*D_n^\infty,f(x_*))\|_{f(\E)}\leq 1.\]

  It remains then to show $F(z):=F_n(f(z),f(\E),f_*D_n^\infty,f(x_*))$ is an extremizer for $n,\E$, $D_n^\infty$ and $x_*$. This  will follow from showing that for $d_n>0$
  \begin{align}\label{eq:8sept1}
  r(f(z),f(x_*))^{d_n}- c_n r(z,x_*)^{d_n}\in \cL(D_n^\infty-x_*),\\
  \label{eq:8sept2} r(f^{-1}(z),x_*)^{d_n}-\frac{1}{c_n}r(z,f(x_*))^{d_n}\in \cL(f(D_n^\infty)-f(x_*)),
  \end{align}
  for constants $c_n>0$. Indeed, given \eqref{eq:8sept1}, \eqref{eq:8sept2}, we suppose for the sake of contradiction
  there is
  a $\tilde F\in \cL(D_n^\infty)$ with $\Re \lim_{x\to x_*} \frac{\tilde F(x)}{r(x,x_*)^{d_n}}>\Re \lim_{x\to x_*} \frac{F(x)}{r(x,x_*)^{d_n}}$.  Then, since $\tilde F\circ f^{-1}\in \cL(f_*D_n^\infty)$ by \eqref{eq:inclusion} and
  \[
  \|\tilde F\circ f^{-1}\|_{f(\E)}\leq 1,
  \]
 we contradict extremality of $F(z)$.

To show \eqref{eq:8sept1} and \eqref{eq:8sept2}, we note that for the inversions $z\mapsto -\frac1z$ and the affine transformations $z\mapsto az+b$, $b\in \R$ and $a>0$, \eqref{eq:8sept1} and \eqref{eq:8sept2} follow by elementary computations.
  Since these generate the group $\PSL(2,\bbR)$, by writing $f$ in this group as $f=f_1\circ f_2\circ f_3$, with $f_1,f_3$ affine, $f_2$ an inversion, and applying the argument immediately above three times, we have Lemma~\ref{lem:conformalinvar}.
\end{proof}

Before we state one of the main theorems of the section, we recall that the set $\supp (f)_a$ is called the set of $a$-points of the function $f$. Polynomials or entire functions with real $\pm 1$-points play an important role for uniform approximation problems and in the spectral theory of self adjoint operators; cf. \cite{ErY12,MarOst75}. They are also intimately related with the notion of a set of alternation.

We will write
$$
\E=
 \overline{\bbR} \setminus \bigcup_{i} (\ba_i,\bb_i),
$$
where $(\ba_i,\bb_i)$ are the gaps of $\E$, indexed by $i$ from a countable indexing set.

\begin{theorem}\label{thm:MarkovCorrection}
	Let $F_n$ be a maximizer for Problem \ref{prob3}.
  Let $(\ba,\bb)$ be the gap containing $x_*$.
	\begin{enumerate}[(i)]
    \item\label{it:zeroreal} $F_n$ has only real generalized zeros.
    \item\label{it:real} $F_n$ is real.
    \item\label{it:altPoint} For any distinct points $x_1,x_2\in \overline{\R}$ such that $D_n^0(x_i)\geq 1$,  there is a point $y\in \E\cap(x_1,x_2)$ with $|F_n(y)|=1$.
    \item\label{it:zerosimple}$F_n$ has only simple generalized zeros, i.e., $D_n^0\leq 1$.
	\item\label{it:1zero} $F_n$ has at most one generalized zero in each gap.
	 \item\label{it:noZeroGap} $F_n$ has no generalized zeros in the gap $(\ba,\bb)$ containing $x_*$.
	 \item \label{it:unique} There is a unique extremizer $F_n$.
    \item \label{it:realpm} If $F_n$ is not constant, $\{z\in \overline \C: F_n(z)\in[-1,1]\}\subset \overline\bbR$. In particular, all $\pm 1$-points of $F_n$ lie on $\overline{\bbR}$.
    \item\label{it:ExtrMonotonic}  If $F_n$ is not constant, let $m=\deg F_n$ and let the connected components of $F_n^{-1}((-1,1))$ be called open bands of $\E_n:=F_n^{-1}[-1,1]$. Then, there are $m$ open bands on $\E_n$, $F_n$ is strictly monotonic on each of them and their endpoints account for all $\pm 1$ points.
    \item \label{it:maxatgapn}
    \[
    F_n(\ba)=(-1)^{\sum_{\bc\in(\ba, x_*)}D_n^\infty(\bc)}\quad \text{and}\quad  F_n(\bb)=(-1)^{\sum_{\bc\in[x_*,\bb)}D_n^\infty(\bc)},
    \]
    \item \label{it:onemax}
    For any gap $(\ba_i,\bb_i)$ containing a pole $\bc_i$, either $|F_n(\bb_i)|=1$ or $|F_n(\ba_i)|=1$. If $D_n^0(\bc_i)=1$, $|F_n(\bb_i)|=|F_n(\ba_i)|=1$.
 	\end{enumerate}
\end{theorem}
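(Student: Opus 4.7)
The plan is to establish the eleven items in an order where the technical heart, item (iii), is proved by a perturbation argument using Lemma~\ref{lem:crossratio}, and most other items follow from (iii) by short arguments or from the resulting structure.

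I would start with (ii): the space $\cL_n$ is closed under the involution $F \mapsto \overline{F(\bar z)}$ (since $D_n^\infty$ is supported on $\overline\R$), the $\E$-norm is invariant under this involution, and the extremal functional is $\Re(\cdot)$, so the average $(F_n + \overline{F_n(\bar\cdot)})/2$ is again extremal and we may assume $F_n$ real henceforth.

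Next I would tackle (iii), the main obstacle. Argue by contradiction: assume $x_1, x_2 \in \overline\R$ are distinct with $D_n^0(x_i) \geq 1$ and $\max_{\E \cap (x_1,x_2)} |F_n| =: M < 1$. Consider $\tilde F(z) = F_n(z)\cdot \phi(z)$ with $\phi(z) = (z-t_1)(z-t_2)/((z-x_1)(z-x_2))$; since $x_1, x_2$ are generalized zeros, $\tilde F \in \cL(D_n^\infty)$ for any $t_1, t_2 \in \overline\R$. Choosing $t_1, t_2$ to straddle $x_*$ on the opposite side of $(x_1, x_2)$, a direct computation shows $|\phi| \le 1$ on $\E$ (the only potentially offending region $\E \cap (x_1, x_2)$ is controlled by $M < 1$) while $|\phi(x_*)| > 1$, contradicting extremality. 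The same perturbation handles (i) and (iv): for (i), a non-real zero $z_0$ (and its conjugate $\bar z_0$, which is also a zero by (ii)) can be replaced by a real double zero at the point $t$ produced by Lemma~\ref{lem:crossratio}, strictly reducing $|F_n|$ on $\E$ while not decreasing $|F_n(x_*)|$; for (iv), splitting a hypothetical double generalized zero into two close real zeros yields a configuration violating (iii). Items (v) and (vi) are then immediate consequences: in (v), two generalized zeros in a single gap would demand an alternation point inside that gap (which is empty of $\E$); in (vi), a generalized zero $x_0$ in the gap containing $x_*$ combined with the closest generalized zero outside the gap produces a pair to which (iii) gives an incompatible alternation requirement, once the sign of $F_n(x_*)$ is fixed by extremality.

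For uniqueness (vii), let $F_n \neq G_n$ both be extremizers; then $(F_n + G_n)/2$ is extremal too and, by the items already shown, possesses an alternation set of $n+1$ points in $\E$ at which $(F_n + G_n)/2 = \pm 1$, forcing $F_n = G_n = \pm 1$ there (since $|F_n|, |G_n| \leq 1$). These $n+1$ interpolation conditions pin down a function in the $(n+1)$-dimensional space $\cL_n$, yielding $F_n \equiv G_n$. Finally, items (viii)-(xi) are structural: the $n$ real simple generalized zeros together with $\deg F_n = \deg D_n^\infty - \deg(\text{cancellations})$ force $F_n^{-1}[-1,1] \subset \overline\R$ and decompose it into $\deg F_n$ bands on which $F_n$ is strictly monotonic (the band endpoints account for all $\pm 1$ points), giving (viii)-(ix). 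The sign formulas in (x) are obtained by following $F_n$ from $x_*$ along $\E$ to $\ba$ and to $\bb$: the sign flips exactly across odd-order poles of $D_n^\infty$, matching the stated parities. Item (xi) then follows: a pole in a gap sends $F_n$ to $\pm\infty$, forcing $|F_n|=1$ at (at least) one endpoint by monotonicity of the adjacent band; when the pole's order is fully reduced so that $D_n^0(\bc_i) = 1$, the point $\bc_i$ is itself a generalized zero and (iii) applied across $\bc_i$ together with the band structure forces $|F_n| = 1$ at both endpoints of the gap.
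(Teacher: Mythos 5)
Your symmetrization argument for (ii) establishes only that a \emph{real} maximizer exists, not that \emph{every} maximizer is real. This matters later: for uniqueness (vii) you form the midpoint of two alleged maximizers and apply the alternation machinery, but that machinery is available only for real functions, and you have no a priori guarantee the midpoint is real. The paper avoids this circularity by first proving (i) for an arbitrary maximizer via the cross-ratio Lemma (\ref{lem:crossratio}), then deducing reality directly: once all generalized zeros are real and the poles are real by hypothesis, one can write $F_n = A\tilde F_n$ with $\lvert A\rvert = 1$ and $\tilde F_n$ real, and extremality forces $A\in\{\pm 1\}$. That argument covers \emph{every} maximizer.

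The proof of (iii) is the real gap. You take the multiplicative perturbation $\tilde F = \phi\,F_n$ with $\phi(z) = (z-t_1)(z-t_2)/((z-x_1)(z-x_2))$ and assert $\lvert\phi\rvert\le 1$ on $\E$ and $\lvert\phi(x_*)\rvert > 1$. Neither is justified. After the conformal normalization $x_*=\infty$ (which you do not invoke, but which is the natural reduction), $\phi(\infty)=1$ for \emph{any} real $t_1,t_2$, so $\lvert\phi(x_*)\rvert>1$ is simply false in the central case; the contradiction must instead come from $\lVert\tilde F\rVert_\E < 1$ and rescaling. And the uniform bound $\lvert\phi\rvert\le 1$ on $\E$ outside $[x_1,x_2]$ is not automatic for an arbitrary real quadratic rational with zeros ``straddling $x_*$'' — it depends on $t_1,t_2$ in a way you never pin down. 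The paper uses the \emph{additive} Markov correction $\tilde F_n=(1-\e M)F_n$ with $M=1/((z-x_1)(z-x_2))$; here $M(\infty)=0$ preserves the functional exactly, $M>0$ on $\E\setminus[x_1,x_2]$ strictly decreases $\lvert F_n\rvert$ there, and the hypothesis $\max_{\E\cap[x_1,x_2]}\lvert F_n\rvert<1$ controls the region where $M<0$. Your perturbation might be salvageable, but as written it is not a proof. The same criticism carries over to (iv) (``splitting a double zero into two nearby zeros'' is a heuristic, not an operation you can perform on the fixed function $F_n$; the paper uses a separate Markov term $M=1/(z-x)^2$) and to (vi) (the paper uses $M=\pm 1/(z-x)$, which is strictly positive on all of $\E$; your appeal to an ``incompatible alternation requirement'' is not spelled out).

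Finally, (viii)--(xi) are stated as consequences of the preceding structure but not actually derived. The paper's proof of (viii) is a genuine counting argument: between consecutive zeros of the numerator, either a critical point or a pole forces two $\pm1$-points, giving $2m$ of them with $m=\deg F_n$, which is exactly the maximum possible, so no $\pm1$-points can lie off $\overline\R$. Item (x) needs a Markov correction to first establish $\lvert F_n(\ba)\rvert = \lvert F_n(\bb)\rvert = 1$ before the parity bookkeeping can begin; you skip that step. In short, your outline captures the right global architecture (Markov-type corrections for (i)--(vi), alternation for (vii), counting for (viii)--(xi)) but several of the individual steps are asserted rather than established, and (iii) in particular would fail as stated.
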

\begin{remark}
Note that \eqref{it:altPoint} is stronger than saying between two zeros of $F_n$, we find an extremal point on the set; this statement provides extremal points between a zero and a pole $\bc_j$ at
which $F_n$ has a reduction in order.
\end{remark}

Many of the statements in Theorem \ref{thm:MarkovCorrection} will be proved by Markov correction arguments. We will call a rational function $M$ a Markov correction term if
$MF_n\in \mathcal{L}_{n}$ and  $M(x_*)=0$. We will define the rational function $\tilde F_n=(1-\epsilon M)F_n$, and note that
\begin{align*}
\tilde m_n(x_*)=\Re\lim\limits_{x\to x_*}\frac{\tilde F_n(x)}{r(x,x_*)^{d_n}}=\Re\lim\limits_{z\to x_*}\frac{F_n(x)}{r(x,x_*)^{d_n}}=m_n(x_*).
\end{align*}
 If there exists $\e$ so that $\|\tilde F_n\|_\E<1$, then considering the rescaled function $\tilde F_n / \lVert \tilde F_n \rVert_\E\in\cL_n$, we see that $\tilde m_n(x_*)/\lVert \tilde F_n \rVert_\E>m_n(x_*)$, contradicting the extremality of $F_n$.

\begin{proof}[Proof of Theorem~\ref{thm:MarkovCorrection}]
All the conclusions are invariant under $\PSL(2,\bbR)$ maps, so by Lemma~\ref{lem:conformalinvar}, it suffices to consider the case $x_* = \infty$.  In this case, $\E$ is a compact subset of $\bbR$.

\eqref{it:zeroreal}:
Suppose for the sake of contradiction that there is a generalized zero $z_0\in \bbC \setminus \bbR$. Define
\[
\tilde F_n(z)=\left(\frac{z-t}{z-z_0}\right)F_n(z)
\]
where $t$ is selected so that $\max_{z\in \overline\R}\left|\frac{z-t}{z-z_0} \right|=1$, using Lemma~\ref{lem:crossratio} for $x=\infty$. Since the maximum at $\infty$ is unique and $\E$ is compact, we have
$\|\tilde F_n\|_{\E}<1$, and by the discussion above, this would be a contradiction.

\eqref{it:real}:
Since all poles and zeros of $F_n$ are real, we may write $F_n=A \tilde F_n$, where $A\in \C$ with $\lvert A\rvert = 1$ and $\tilde F_n$ is real. It remains to show that $A \in \bbR$. Note that $\pm \tilde F_n$ are also admissible functions for the extremal problem. Since $F_n$ is extremal and $\tilde F_n$ is real, we have
\[
\Re\lim\limits_{x\to x_*}\frac{A\tilde F_n(x)}{r(x,x_*)^{d_n}}= \Re\lim\limits_{x\to x_*}\frac{ F_n(x)}{r(x,x_*)^{d_n}}\ge \pm \Re\lim\limits_{x\to x_*}\frac{\tilde F_n(x)}{r(x,x_*)^{d_n}} = \pm \lim\limits_{x\to x_*}\frac{\tilde F_n(x)}{r(x,x_*)^{d_n}}.
\]
Since  $\Re\lim_{x\to x_*}\frac{ F_n(x)}{r(x,x_*)^{d_n}} \neq 0$, we conclude that $\lvert \Re(A)\rvert \ge 1$ and therefore $A \in \{1,-1\}$.

\eqref{it:altPoint}: We have $\sup_{\E\cap (x_1,x_2)}|F_n|=\sup_{\E\cap [x_1,x_2]}|F_n| =\max_{\E\cap [x_1,x_2]}|F_n| $. Since $F_n$ is continuous on $\E$ we only have to explain the first equality. We only argue for $x_1$ since $x_2$ follows analogously. We distinguish two cases. If $F_n(x_1)=0$, then clearly the sup is not changed by adding $x_1$. If instead $D_n^\infty(x_1)>0$, then $x_1\notin \E$ and $(x_1,x_2)\cap\E=[x_1,x_2)\cap\E$.

Now, we assume for the sake of contradiction that $\max_{\E\cap [x_1,x_2]}|F_n|<1$. Recalling that $x_*=\infty$ so that $D_n^0(\infty)=0$,  define the Markov correction term
\begin{align*}
M(z;x_1,x_2) = \begin{cases}
\frac{1}{(z-x_1)(z-x_2)},& x_1<x_2\\
\frac{1}{(z-x_1)(x_2-z)}, & x_1>x_2
\end{cases}
\end{align*}
By distinguishing again the cases $F_n(x_i)=0$ and $D_n^\infty(x_i)>0$, we see that in either case $MF_n$ is continuous on $\E\cap [x_1,x_2]$. Thus, by our assumption we find $\e>0$ so that $\max_{[x_1,x_2]\cap \E}|\tilde F_n|<1$. Since on the rest of $\E$, the norm is lowered, we may conclude by contradiction.

\eqref{it:zerosimple}:  Clearly, $D_n^0(x_*)=0$. With our convention $x_* = \infty$ and by \eqref{it:zeroreal}, all generalized zeros are in $\R$.  Suppose $x\in \R$ with $D_n^0(x)\geq 2$.
First, we take $x\not\in \E$. We define the Markov correction term
$M(z,x)=\frac{1}{(z-x)^2}$.
If
$x\notin \E$, $z\to M(z,x)$ is continuous on $\E$ and so we may find an $\epsilon>0$ such that
$\|\tilde F_n\|_{\E}<1$.
If instead, $x\in \E$, then we conclude as in \eqref{it:altPoint} by continuity of $MF_n$ that we may find
a small enough $\epsilon>0$ so that $\|\tilde F_n\|_\E<1$.

\eqref{it:1zero}:
It follows from \eqref{it:altPoint} that between any two generalized zeros there must be a point in $\E$.

\eqref{it:noZeroGap}: Assume there is a zero in $\R\setminus [\bb,\ba]$. We use the Markov correction term
\[
M(z;x)=\begin{cases}
\frac{1}{z-x},& x<\bb\\
\frac{1}{x-z},& x>\ba
\end{cases}
\]
which is continuous and strictly positive on $\E$. By continuity and compactness, for all small enough $\epsilon > 0$, $\lVert 1 - \epsilon M \rVert_\E < 1$, so $\tilde F_n = (1 - \epsilon M) F_n$ once again contradicts extremality.

\eqref{it:unique}: Assume that there are two extremizers $F_n^1, F_n^2$. By convexity, $T_n=\frac12(F_n^1+ F_n^2)$ is then also an extremizer. Let $y_i\in \E$ be the points given by \eqref{it:altPoint} with $|T_n(y_i)|=1$.
We note that by \eqref{it:zerosimple} there are $n$ such points.
Then since $|F_n^1(y_i)|,|F_n^2(y_i)|\leq 1$ and $|T_n(y_i)|=1$,
$F_n^1(y_i)=F_n^2(y_i)=T_n(y_i)$ so that $F_n^1(y_i)- F_n^2(y_i)=0$. Define $H_n=F_n^1-F_n^2$ and let $D_n^0$ denote its divisor of generalized zeros. Then $D_n^0(x_*)\geq 1$ and $D_n^0(y_j)\geq 1$ and we conclude that $\deg D_n^0\geq n+1$. Since $H_n\in\cL_n$, this implies $H_n\equiv 0$ and $F_n^1= F_n^2$.

\eqref{it:realpm}: We write $F_n$ in reduced form as $F_n=\frac{P}{Q}$, with $\deg(P)=m$ and note that $\deg Q\leq m$ so that $\deg (F_n) = m$. If $F_n$ is nonconstant, we use a counting argument.

Take two consecutive zeros of $F_n$, $x_1$ and $x_2$. If there is no pole between them, there must be a critical value $y$ and by \eqref{it:altPoint}, it must obey $|F_n(y)|\geq 1$. Separating cases by whether $\lvert F_n(y) \rvert = 1$, we either obtain an (at least) double zero of $F_n^2 - 1$ at $y$, or zeros on intervals $(x_1,y)$ and $(y,x_2)$. Similarly, if there is a pole $y \in (x_1,x_2)$, by continuity there are $\pm 1$-points on intervals $(x_1,y)$ and $(y,x_2)$.

Thus, counted with multiplicity, there are at least two $\pm 1$-points on this interval. The $m$ simple zeros of $P$ partition $\overline{\bbR}$ into $m$ such intervals, so we have at least $2m$ total $\pm 1$-points. Since $\deg (F_n) = m$, this construction gives all the $\pm 1$-points of $F_n$. In particular, this now also holds for the set of $\pm a$-points for any $a\in[-1,1]$.

\eqref{it:ExtrMonotonic}: Let $I_n^k$ be the connected components of the open set $F_n^{-1}((-1,1))$. The previous argument shows that for $a\in (-1,1)$, the  $\pm a$-points are simple. Thus, if $F_n(x)=\pm a$, then
$F_n'(x)\neq 0$, so by continuity, the derivative has the same sign on each open band $I_n^k$. In particular, $F_n(I_n^k)=(-1,1)$ for each $k$ and there are $m$ connected components, $F_n^{-1}((-1,1)) = \cup_{k=1}^m I_n^k$. That the endpoints of $I_n^k$ account for all $\pm 1$ points follows from the counting above.

\eqref{it:maxatgapn}: First we show the modulus is $1$ at each point.  If $F_n\equiv 1$, this is clear. If $\deg(F_n)\geq 1$, we will make use of the zeros of $F_n$. Suppose for the sake of contradiction that $|F_n(\bb)|<1$. Then, define
$x:=\min\{ y:F_n(y)=0\}$, with $x\geq \bb$ by \eqref{it:noZeroGap}. We have $\sup_{z\in [\bb,x]}|F_n(z)|<1$ by \eqref{it:ExtrMonotonic}.
Define the Markov correction term $M(z,x)=\frac{1}{x-z}$ and note that $M\leq 0$ on $[\bb,x]$. By the same arguments as \eqref{it:altPoint} we derive a contradiction.
The same argument at $\ba$ shows $|F_n(\ba)|=1$.

By \eqref{it:noZeroGap}, the sign changes on $(\ba,\infty)$ can only occur at the poles contained in this interval, which we order as $\bc_{n_1}<\cdots<\bc_{n_m}$. By
\eqref{it:noZeroGap}, $F_n$ has no reduction of order at the poles at the $\bc_{n_i}$, so for a $t\in (\bc_{n_m},\infty)$, $\sgn(F_n(\ba))=(-1)^{\sum_{\bc\in(\ba, \infty)}D_n^\infty(\bc)}\sgn(F_n(t))$. By our definition of $r(z,\bc)$, $F_n>0$ on $(\bc_{n_m},\infty)$.
Since $|F_n(\ba)|=1$ by our work above, this proves the claim at $\ba$.
Similar analysis at $\bb$, with the modification that the parity of $d_n$ contributes to the sign, completes the proof.

\eqref{it:onemax}: If $F_n$ is constant, $F_n\equiv 1$ and the claim is clear. Thus, we take $F_n$ nonconstant. By \eqref{it:maxatgapn}, it suffices to consider gaps $(\ba_i, \bb_i) \neq (\ba,\bb)$.  If $|F_n(\ba_i)|=1$ there is nothing to prove. If $|F_n(\ba_i)|<1$, it follows  from monotonicity on the bands and $\lim_{x\to\infty}F_n(x)>1$ that there is a $\tilde x_1<\ba_i$ with $F_n(\tilde x_1)=0$. Similar considerations hold for $\bb_i$. If $\max\{|F_n(\ba_i)|,|F_n(\bb_i)|\}<1$, let $x_1:=\max\{ y:y< \bc_i,D_n^0(y)=1\}$ and $x_2:=\min\{ y:y>\bc_i,D_n^0(y)=1\}$. By \eqref{it:altPoint} there must be $y\in(x_1,x_2)\cap \E$, with $|F_n(y)|=1$. As in the proof of \eqref{it:maxatgapn}, we conclude from monotonicity on the bands that that either $y=\ba_i$ or $y=\bb_i$. If $D_n^0(\bc_i)=1$, we conclude in the same way that there is $y_1\in(x_1,\bc_1)$ and $y_2\in(\bc_1,x_2)$ with $|F_n(y_j)|=1$ and finally that $y_1=\ba_i$ and $y_2=\bb_i$.
\end{proof}

\begin{theorem}\label{thm:altset2}
Let $F \in \cL_n$ be real and $D_n^0$ its generalized zero divisor. Then, any set of alternation points  has at most $n+1-D_n^0(x_*)$ points.
 \end{theorem}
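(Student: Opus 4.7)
The plan is to reduce to $x_* = \infty$ via the conformal invariance from Lemma~\ref{lem:conformalinvar}, which turns cyclic intervals into ordinary intervals of $\bbR$ and the alternation set into $x_1 < \dots < x_m$ in $\E \subset \bbR$. The target bound will then follow from the identity $\deg D_n^0 = n$ by locating $D_n^0(x_*)$ generalized zeros at $x_*$ together with at least one generalized zero in each of the $m-1$ open intervals $(x_j, x_{j+1})$.

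First I would translate the alternation identity $F(x_j) = (-1)^{m-j-S_n(x_j)}$ into the sign-change relation
\[
\sgn F(x_{j+1})/\sgn F(x_j) = (-1)^{p_j - 1}, \qquad p_j := S_n(x_j) - S_n(x_{j+1}) = \sum_{\bc \in (x_j,x_{j+1})} D_n^\infty(\bc),
\]
where the second equality is an elementary computation from the definition of $S_n$ once $x_* = \infty$. Second, since $F$ is a real rational function with only real zeros and poles (by the paper's standing assumption that the poles lie in $\overline\bbR \setminus \E$ and, for a general real $F \in \cL_n$, any nonreal zeros would come in conjugate pairs contributing trivially to parity), a direct factor-by-factor sign count on the same interval gives
\[
\sgn F(x_{j+1})/\sgn F(x_j) = (-1)^{\sum_{\bc \in (x_j,x_{j+1})} \bigl[(F)_0(\bc) + (F)_\infty(\bc)\bigr]}.
\]
Equating the two exponents modulo $2$ and invoking $D_n^0 = (F)_0 + D_n^\infty - (F)_\infty$, the generalized-zero count $N_j := \sum_{\bc \in (x_j, x_{j+1})} D_n^0(\bc)$ satisfies $N_j \equiv 1 \pmod 2$, hence $N_j \ge 1$.

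Finally, since alternation points lie in $\E$ while $\supp D_n^0 \subset \{x_*\} \cup (\overline\bbR \setminus \E)$ (poles are disjoint from $\E$ and $|F(x_j)| = 1$ rules out $(F)_0(x_j) > 0$), the $x_j$ contribute nothing to $D_n^0$, so
\[
n = \deg D_n^0 \ge D_n^0(x_*) + \sum_{j=1}^{m-1} N_j \ge D_n^0(x_*) + (m-1),
\]
which rearranges to $m \le n+1 - D_n^0(x_*)$. The main technical point is the parity computation, which must simultaneously track true zeros of $F$ and reductions of pole order via the identity $D_n^0 = (F)_0 + D_n^\infty - (F)_\infty$; everything else is bookkeeping.
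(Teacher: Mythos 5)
The proposal is correct and rests on the same parity/interlacing observation as the paper's proof, namely that the alternation condition forces the $D_n^0$-mass of each arc between consecutive alternation points to be odd, hence at least one; the paper packages this in dual form by partitioning $\overline{\bbR}$ at the distinct points carrying $D_n^0$-mass (together with $x_*$) and showing each of the resulting $k+1\le n+1-D_n^0(x_*)$ arcs contains at most one alternation point, but both are the same pigeonhole. One small imprecision to note: Lemma~\ref{lem:conformalinvar} as stated concerns maximizers, while Theorem~\ref{thm:altset2} is about an arbitrary real $F\in\cL_n$, so you should appeal to the routine conformal invariance of the statement itself rather than to that lemma (the paper sidesteps the issue entirely by working with cyclic intervals and never normalizing $x_*=\infty$).
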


 \begin{proof}
Set $m=D_n^0(x_*)\geq 0$ and let $y_j\in \overline{\R}\setminus\{x_*\}$ be the $k$ points with
  $D_n^0(y_{j})>0$, where regardless of its multiplicity each point appears only once. Since $\deg D_n^0=n$, we see that $k\leq n-m$. Adding $x_*$ to this list, we cyclically order the points as $(x_*,y_1,\dots,y_{k})$. We note that these points cannot be part of an alternating set, as they either are zeros of $F$, or coincide with some $\bc_i\notin\E$ or $x_* \notin\E$. We also write $y_0 = y_{k+1} = x_*$.

Fix $1 \le j \le k+1$. On the interval $(y_{j-1},y_j)$, $F$ has no generalized zeros, so the sign changes of $F$ only occur at poles, according to the divisor $D_n^\infty$: if $(x_1, x_2) \subset (y_{j-1}, y_j)$, and $x_1, x_2$ are not poles, then
\begin{align}\label{eq:interlacing}
F(x_2) = (-1)^{\sum_{\bc\in(x_1,x_2)}D_n^\infty(\bc)} F(x_1) =(-1)^{S_n(x_2)-S_n(x_1)} F(x_1).
\end{align}
Thus, $x_1, x_2$ cannot be two consecutive points of the same alternation set, because by the definition of alternation set, this would imply $F(x_2) = (-1)^{1+S_n(x_2) - S_n(x_1)} F(x_1)$ and lead to contradiction. Thus, any alternation set has at most one point in each interval $(y_{j-1}, y_j)$ for $1\le j \le k+1$, so any alternation set has at most $k+1\leq n-m+1$ alternation points.
\end{proof}

The above theorem justifies the following definition.

\begin{definition}
We say that $F_n$ has a maximal set of alternation points if it has a set of alternation points of size $n+1$.
\end{definition}

\begin{theorem}\label{thm:altset}
If $F_n$ is the maximizer for \eqref{eq:ExtremalProblem}, then it has a maximal set of alternation points.  \end{theorem}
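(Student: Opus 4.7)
The plan is to exhibit an explicit alternation set of size $n+1$ for $F_n$, constructed from its generalized zeros together with the endpoints of the gap containing $x_*$. I split into the nonconstant and constant cases. In the nonconstant case, I first use Lemma~\ref{lem:conformalinvar} to reduce to $x_* = \infty$, so $\E \subset \R$ is compact and the gap containing $x_*$ is $(\ba,\bb) = (\sup\E,\inf\E)$ in cyclic notation. Properties \eqref{it:zeroreal}, \eqref{it:zerosimple}, and \eqref{it:noZeroGap} of Theorem~\ref{thm:MarkovCorrection}, together with $\deg D_n^0 = n$, guarantee that $F_n$ has exactly $n$ simple real generalized zeros $z_1 < \cdots < z_n$ lying in the open interval $(\bb,\ba)$; the strict inequality at the endpoints is forced by $|F_n(\ba)|=|F_n(\bb)|=1$ from \eqref{it:maxatgapn}. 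I then set $y_0 := \bb$, $y_n := \ba$, and for each $1 \le k \le n-1$ apply \eqref{it:altPoint} to the pair $z_k, z_{k+1}$ to pick some $y_k \in \E \cap (z_k, z_{k+1})$ with $|F_n(y_k)| = 1$. This yields $n+1$ points with $(x_*, y_0, y_1, \ldots, y_n)$ cyclically ordered.

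Next, I verify the alternation condition $F_n(y_k) = (-1)^{n-k-S_n(y_k)}$ by induction on $k$. The base case $k=0$ follows from \eqref{it:maxatgapn} combined with the partition identity
\[
\sum_{\bc \in [x_*, \bb)} D_n^\infty(\bc) + \sum_{\bc \in [\bb, x_*)} D_n^\infty(\bc) = n
\]
and the observation that $S_n(\bb) = \sum_{\bc \in [\bb, x_*)} D_n^\infty(\bc)$, using that $\bb \in \E$ is not a pole. For the inductive step, I note that at each real point $\bc$ the local sign of $F_n$ flips by the factor $(-1)^{(F_n)_0(\bc) - (F_n)_\infty(\bc)} = (-1)^{D_n^0(\bc) + D_n^\infty(\bc)}$. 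In the interval $(y_{k-1}, y_k)$ the only generalized zero is $z_k$ (contributing $D_n^0(z_k)=1$) and the total pole multiplicity is $S_n(y_{k-1}) - S_n(y_k)$ (again since $y_{k-1}, y_k$ are not poles), so
\[
\frac{F_n(y_k)}{F_n(y_{k-1})} = (-1)^{1 + S_n(y_{k-1}) - S_n(y_k)}.
\]
Combined with the inductive hypothesis this gives the claim at $y_k$, and the case $k=n$ reconciles with the formula for $F_n(\ba)$ in \eqref{it:maxatgapn} as a consistency check.

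In the constant case $F_n \equiv 1$, Theorem~\ref{thmConstant} places $x_*, \bc_1, \ldots, \bc_n$ in $n+1$ distinct gaps, so the complement of these gaps in $\overline{\bbR}$ splits into $n+1$ disjoint closed cyclic arcs, each containing gap endpoints and hence points of $\E$. Selecting one $x_j \in \E$ in each arc, cyclically ordered from $x_*$, a direct count of poles gives $S_n(x_j) = n - j + 1$, so $(-1)^{(n+1) - j - S_n(x_j)} = 1 = F_n(x_j)$. The principal difficulty of the whole proof lies in the sign bookkeeping in the inductive step of the nonconstant case: one must correctly account for both ordinary zeros of $F_n$ and generalized zeros occurring at poles with $(F_n)_\infty(\bc) < D_n^\infty(\bc)$, and correlate these sign flips with the cyclic-interval definition of $S_n$.
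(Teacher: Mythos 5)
Your proof is correct and follows essentially the same strategy as the paper: build the alternation set from $\ba$, $\bb$, and the $n-1$ points of modulus one supplied by Theorem~\ref{thm:MarkovCorrection}\eqref{it:altPoint} between consecutive generalized zeros, then verify the signs inductively using \eqref{it:maxatgapn} for the base case and the parity of $D_n^0+D_n^\infty$ across each subinterval for the step. The separate treatment of the constant case is not needed---when $F_n\equiv 1$ the generalized zeros are simply the $n$ distinct poles, $\lvert F_n\rvert\equiv 1$ makes \eqref{it:altPoint} trivial, and your nonconstant argument goes through verbatim---but including it causes no harm.
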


\begin{proof}
Due to Theorem~\ref{thm:MarkovCorrection}\eqref{it:noZeroGap}, $D_n^0(y)=0$ for all $y\in(\ba,\bb)$ and therefore, using \eqref{eq:degDivZero} and  Theorem~\ref{thm:MarkovCorrection}\eqref{it:real},\eqref{it:zerosimple}, there is a cyclically ordered sequence $(\bb, y_1,\dots,y_n,\ba)$,
so that $D_n^0(y_i)=1$. By Theorem~\ref{thm:MarkovCorrection}\eqref{it:altPoint}, for $2\leq j\leq n$, there is a point
$x_j\in(y_{j-1}, y_j)$ and $x_j\in\E$, so that $|F_n(x_j)|=1$. We claim that together with $x_{n+1}=\ba$ and $x_{1}=\bb$ these points form a maximal set of alternation points.

We start with $x_{n+1}$ and $x_1$.
Let $(\ba,\bb)$ be the gap containing $x_*$.  We have
	\begin{align*}
	S_n(\ba)=\sum_{\bc\in(\ba,x_*)}D_n^\infty(\bc).
	\end{align*}
Thus, it follows directly from Theorem~\ref{thm:MarkovCorrection}\eqref{it:maxatgapn}  that $x_{n+1}=\ba$ is an alternation point.
	Similarly, we see that
	\begin{align*}
	S_n(\bb)=\sum_{\bc\in(\bb,x_*)}D_n^\infty(\bc)
	\end{align*}
and therefore since $\deg D_n^0=\sum_{\bc}D^\infty_n(\bc)=n$ and $D_n^\infty(\bb)=0$,
\begin{align*}
n+1-1-S_n(\bb)=\sum_{\bc}D^\infty_n(\bc)-\sum_{\bc\in(\bb,x_*)}D^\infty_n(\bc)=\sum_{ \bc\in[x_*,\bb)}D^\infty_n(\bc).
\end{align*}
Thus, again by Theorem \ref{thm:MarkovCorrection}\eqref{it:maxatgapn}, also  $x_1=\bb$ is an alternation point in the above sense.

Now for $j\geq 1$ take $x_j,x_{j+1}$ and $y_j\in(x_j,x_{j+1})$ and assume that $x_j$ is an alternation point. Note that all sign changes of $F_n$ correspond either to a pole of $F_n$ or to $y_j$. Thus,
\begin{align}\label{eq:Dec14}
F_n(x_j)=(-1)^{1+\sum_{\bc\in(x_j,x_{j+1})}D_n^\infty(\bc)}F_n(x_{j+1}).
\end{align}
This is easily seen if $D_n^\infty(y_j)=0$. If $D_n^\infty(y_j)>0$, then $(F_n)_\infty(y_j)=D_n^\infty(y_j)-1$ and \eqref{eq:Dec14} still holds.
On the other hand
\begin{align*}
S_n(x_{j})-S_n(x_{j+1})=\sum_{\bc\in(x_j,x_{j+1})}D_n^\infty(\bc).
\end{align*}
Therefore, $x_{j+1}$ is also an alternating point. Thus, by induction we conclude that $\{x_i\}_{i=1}^{n+1}$ form a maximal set of alternation points for $F_n$.
\end{proof}

We also have a form of converse to Theorem~\ref{thm:altset}, which we prove as the following theorem.
\begin{theorem}\label{thm:altsetconverse}
If $F\in \cL_{n}$ is real and has a maximal alternation set, then $F$ is the unique maximizer for Problem~\ref{prob3}.
\end{theorem}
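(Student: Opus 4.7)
The plan is to argue by contradiction, using Theorem~\ref{thm:MarkovCorrection}\eqref{it:unique} and a generalized-zero count for $G := F_n - F$, where $F_n$ denotes the extremizer for the same data as $F$. By the conformal invariance of Lemma~\ref{lem:conformalinvar} I may assume $x_* = \infty$. Write $\ell(H) := \lim_{x\to x_*} H(x)/r(x,x_*)^{d_n}$ for any real $H \in \cL_n$. Since $F$ is admissible, $\ell(F) \le \ell(F_n)$; in the equality case, uniqueness immediately gives $F = F_n$. Otherwise $G \in \cL_n \setminus \{0\}$ has $\ell(G) > 0$, and consequently $D_G^0(x_*) = 0$: if $d_n \ge 1$ the pole of $G$ at $\infty$ has the maximal order $d_n$, and if $d_n = 0$ then $G(\infty) > 0$. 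Since $G \not\equiv 0$, $\deg D_G^0 = n$, so any bound $\deg D_G^0 \ge n+1$ will yield a contradiction.

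The main step is a parity count, interval by interval. At every alternation point $x_j$, the hypotheses $|F_n(x_j)| \le 1$ and $F(x_j) = (-1)^{n+1-j-S_n(x_j)}$ force $\sgn G(x_j) = -\sgn F(x_j)$ whenever $G(x_j) \ne 0$. On an open real interval $I$, writing $p(I), m(I), z(I)$ for the total masses of $D_n^\infty$, $(G)_\infty$, $(G)_0$ in $I$, the net sign change of $G$ across $I$ equals $(-1)^{z(I) + m(I)}$, while $I$ contributes $z(I) + p(I) - m(I)$ to $D_G^0$. For each inner interval $(x_j, x_{j+1})$ with $1 \le j \le n$, the alternation of $F$ forces $z + m \equiv 1 + p \pmod 2$, so $z + p - m$ is odd and hence at least $1$. (Cases in which $G$ vanishes at some alternation point are handled by merging the two adjacent intervals; the merged contribution, now of even parity, is still at least $2$ because $G$ has a zero of multiplicity at least $1$ at the merging point.) Either way, $n$ generalized zeros accrue from the inner intervals.

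The extra generalized zeros come from splitting the cyclic arc through $x_* = \infty$ at $x_*$ into the two half-arcs $(x_{n+1}, +\infty)$ and $(-\infty, x_1)$. Since $\ell(G) > 0$, the asymptotic $G(x) \sim \ell(G)\, x^{d_n}$ yields $\sgn G(x) = +$ as $x \to +\infty$ and $\sgn G(x) = (-1)^{d_n}$ as $x \to -\infty$. The identity $\deg D_n^\infty = n$ lets the same parity bookkeeping be applied on each half-arc, producing one further generalized zero on each side. Combined with $D_G^0(x_*) = 0$, this gives $\deg D_G^0 \ge n + 2$, contradicting $\deg D_G^0 = n$.

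The principal obstacle is this final splitting step. The strict inequality $\ell(F) < \ell(F_n)$ is used twice: it pins down the full-order pole of $G$ at $x_*$ (so that $D_G^0(x_*) = 0$), and it fixes the signs of $G$ at $\pm\infty$ so that the two half-arcs yield genuinely independent contributions rather than being absorbed into a single cyclic sign-change condition. The classical polynomial case $D_n^\infty = n\chi_{\{\infty\}}$ recovers the standard alternation-converse argument.
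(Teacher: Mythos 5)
Your proof is correct, and the overall strategy is the same as the paper's: build the difference with the extremizer, then pin down its sign at the alternation points and count generalized zeros via parity. The distinction is in the bookkeeping. The paper first passes to the dual (norm-minimization) problem, normalizing both functions to have leading coefficient $1$ at $x_*$; the difference $\tilde H_n = \tilde F - \tilde F_n$ then automatically has a generalized zero at $x_*$, and counting one zero in each of the $n$ inner intervals $(x_j,x_{j+1})$ already gives $\deg D^0_{\tilde H_n}\ge n+1$, a contradiction. The normalization has an extra benefit: $|\tilde F(x_j)|=\|\tilde F\|_\E>\|\tilde F_n\|_\E\ge|\tilde F_n(x_j)|$, so $\tilde H_n(x_j)\ne 0$ at every alternation point, and no merging is needed.

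You compare $F$ with $F_n$ directly. Since $\ell(G)>0$ and $(G)_\infty\le D_n^\infty$, the reduction divisor $D^\infty_n-(G)_\infty$ vanishes at $x_*$, so $D^0_G(x_*)=0$ and you have to make up the missing zero elsewhere. You do this by using the sign of the leading term of $G$ at $\pm\infty$ (determined by $\ell(G)>0$ and the parity of $d_n$), which lets you run the same parity count on the two unbounded half-arcs $(-\infty,x_1)$ and $(x_{n+1},+\infty)$, yielding two additional generalized zeros and the stronger bound $\deg D^0_G\ge n+2$. This is fine, and conceptually pleasant: you're partitioning the full cyclic arc $\overline\bbR\setminus\{x_*\}$ into $n+2$ subarcs rather than using only the $n$ inner ones. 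The price you pay is that $|F_n(x_j)|=1$ is possible, so $G(x_j)=0$ can occur and you need the merging step; your observation that the merged contribution retains the correct parity and is bounded below by the number of generalized zeros accrued at the merge points extends without difficulty to longer runs of consecutive vanishing, though you only state it for a single pair. On balance the two arguments are of comparable length; the paper's normalization trades the two half-arc counts and the merging analysis for the one-line observation $D^0(x_*)\ge 1$.

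One small remark on the exposition: you write early on that ``any bound $\deg D_G^0 \ge n+1$ will yield a contradiction,'' but what you actually prove is $\deg D_G^0\ge n+2$; either suffices since $\deg D_G^0 = n$, but the sentence reads as if $n+1$ were the target.
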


\begin{proof}
Let  $F\in \cL_n$ be real and suppose that it has a maximal set of alternation points $\{x_1,\dots,x_{n+1}\}$. By relabeling, we assume the cyclic ordering $(x_*,x_1,\dots,x_{n+1})$. By Theorem~\ref{thm:altset2}, if $F$ has an alternation set with $n+1$ points, then $(F)_\infty(x_*)=d_n$. Therefore, we  can define $\lim_{x\to x_*}F(x)/r(x,x_*)^{d_n}=:\alpha_n\in\R\setminus\{ 0\}$. It is convenient to rephrase our extremal problem: $F_n$ solves \eqref{eq:Linftyextremal} if and only if
$\tilde F_n:=\frac{1}{\lambda_n}F_n$ solves the dual problem
\begin{align}\label{eq:Linftydual}
\inf\{ \| \tilde F_n\|_\E: \lim\limits_{x\to x_*}\frac{\tilde F_n(x)}{r(x,x_*)^{d_n}}=1,\quad  \tilde F_n\in\cL_{n}\}.
\end{align}
By this duality and Theorem~\ref{thm:MarkovCorrection}\eqref{it:unique}, it will suffice to show $\tilde F:=\frac{1}{\alpha_n}F$ is also an extremizer for \eqref{eq:Linftydual}; $\|\tilde F\|_\E=\|\tilde F_n\|_\E$. Suppose that
$\| \tilde F\|_\E>\| \tilde F_n\|_\E$. We define $\tilde H_n= \tilde F- \tilde F_n$ and denote its generalized zero divisor by $D_n^0$. Our normalization implies that $D_n^0(x_*)\geq 1$. Since $\sgn(H_n(x_j))=\sgn(F(x_j))$, we have $\sgn(H_n(x_j))=(-1)^{n+1-j-S_n(x_j)}$ for $1\leq j\leq n+1$. By the computation \eqref{eq:interlacing}, we conclude that there must be $y_j\in(x_j,x_{j+1})$ with $D_n^0(y_j)\geq 1$ for $1\leq j\leq n$. Thus, $\deg (D_n^0)\geq n+1$, which contradicts $H_n\in\cL_{n}$.
\end{proof}

In particular, the proof of Theorem~\ref{thmAlternation} is now complete and we may prove Corollary~\ref{cor:gapchange}.
\begin{proof}[Proof of Corollary \ref{cor:gapchange}]
We let $\{x_1,\dots,x_{n+1}\}$ be an alternation set for $F_n$ and the point $x_*$, with cyclic ordering $(x_*,x_1,\dots,x_{n+1})$, where we recall that $x_1=\ba$ and $x_{n+1}=\bb$. By definition of $S_n$ we see that for $1\leq \ell\leq n$ we have
\begin{align}\label{eq:1Dec14}
\frac{F_n(x_\ell)}{F_n(x_{\ell+1})}=(-1)^{1+\sum_{\bc\in(x_\ell,x_{\ell+1})}D_n^\infty(\bc)}.
\end{align}
However,
\begin{align}\label{eq:2Dec14}
\frac{F_n(\ba)}{F_n(\bb)}=(-1)^{\sum_{\bc\in(\ba,\bb)}D_n^\infty(\bc)},
\end{align}
which is easier to see by using the expressions in  Theorem \ref{thm:MarkovCorrection}\eqref{it:maxatgapn}. The difference between \eqref{eq:1Dec14} and \eqref{eq:2Dec14} is manifested in the fact that $S_n$ is anchored at $x_*\in(\ba,\bb)$. Moreover, by Theorem \ref{thmAlternation}, if there exists a set $\{x_1,\dots, x_{n+1}\}$ which can be cyclically ordered  so that $F_n$ satisfies \eqref{eq:1Dec14} and \eqref{eq:2Dec14}, then for any $\tilde x_*\in(\ba,\bb)$ up to a factor $\pm 1$, $F_n$ is the maximizer of \eqref{eq:ExtremalProblem}.

Denote by $x_j^*$ a point in the gap $(\ba_j,\bb_j)$. Let $S_n^j(x):=\sum_{\bc\in \overline\R\setminus \{x_*^j\}}D_n^\infty(\bc)\chi_{[x_*^j,\bc)}(x)$. There is $1\leq k\leq n$ so that $x_*^j\in(x_k,x_{k+1})$. Let us order the $y_i$ with $D_n^0(y_i)=1$ cyclically as $(\ba,\bb,y_1,\dots,y_{n})$. The assumption $D_n^0=0$ on $(\ba_j,\bb_j)$ implies $\ba_j,\bb_j\in(y_i,y_{i+1})$ for some $1\leq i\leq n-1$. By \eqref{it:ExtrMonotonic},
$\ba_j$ and $\bb_j$ are the only points in $(y_i,y_{i+1})$ with $|F_n|=1$, and since there is exactly one of the $x_i$ in each of the $(y_i,y_{i+1})$, one and only one of $\ba_j$ and $\bb_j$ is in the alternation set $\{x_1,\dots,x_{n+1}\}$. Without a loss of generality we take $x_k=\ba_j$. We now claim the set $\{ x_1,\dots, x_k,\bb_j,x_{k+1},\dots x_{n}\}$ will form our alternation set. Since $S_n^j$ is now anchored at $x_j^*\in(\ba_j,\bb_j)$, we need to check \eqref{eq:2Dec14} for the gap $(\ba_j,\bb_j)$. From the assumption that $D_n^0=0$ on $(\ba_j,\bb_j)$ it follows that
\begin{align*}
\frac{F_n(\ba_j)}{F_n(\bb_j)}=(-1)^{\sum_{\bc\in(\ba_j,\bb_j)}D_n^\infty(\bc)}.
\end{align*}
By the assumption that $\{x_1,\dots,x_{n+1}\}$ form an alternation set (for $S_n$), \eqref{eq:1Dec14} (for $S_n^j$) is clearly satisfied for $\{x_1,\dots,x_k\}$ and for $\{x_{k+1},\dots,x_n\}$.  Using again that $\{x_1,\dots,x_{n+1}\}$ form an alternation set and that $\ba_j=x_k$, we have
\begin{align*}
F_n(x_{k+1})=(-1)^{1+\sum_{\bc\in(x_k,x_{k+1})}D_n^\infty(\bc)}F_n(x_{k})&=(-1)^{1+\sum_{\bc\in(x_k,x_{k+1})}D_n^\infty(\bc)}(-1)^{\sum_{\bc\in(\ba_j,\bb_j)}D_n^\infty(\bc)}F_n(\bb_j)\\
&=(-1)^{1+\sum_{\bc\in(\bb_j,x_{k+1})}D_n^\infty(\bc)}F_n(\bb_j).
\end{align*}
Thus, \eqref{eq:1Dec14} is also satisfied for $x_{k+1}$ and $\bb_j$. Similarly we can check \eqref{eq:1Dec14} for $x_n$ and $x_1$ and conclude that up to a factor of $\pm 1$ $F_n$ is also extremal for $x_j^*$.
\end{proof}

\begin{remark}
In the above argument, one could have removed $x_1$ and kept $x_{n+1}$ to form an alternation set for $x_*^j$.
\end{remark}

Next, we describe when the extremizer is constant:

\begin{proof}[Proof of Theorem~\ref{thmConstant}]
 Suppose $\E$ takes the above form. Without a loss of generality we assume that $(x_*,\bc_1,\dots,\bc_n)$ are cyclically ordered.
Then, $(\bb_0,\ba_0,x_1,\dots,x_{n-1})$, where $x_\ell\in \E\cap(\bc_{\ell},\bc_{\ell+1})$ for $1\leq \ell\leq n-1$  forms a set of alternation for $F_n\equiv 1$. By Theorem~\ref{thm:altsetconverse}, $F_n$ is the maximizer for \eqref{eq:ExtremalProblem}.

Suppose now the set is not of the above form. If there is a $\bc_j$ with $D_n^\infty(\bc_j)\geq 2$, by \eqref{it:zerosimple}, the extremizer $F_n$ is nonconstant. If there are two distinct poles $\bc_i$ and $\bc_j$ in a single gap, then $F_n$ cannot be constant by \eqref{it:1zero}. In either case, $F_n$ is nonconstant.
\end{proof}

We record a final corollary of Theorem~\ref{thm:altset}.

\begin{corollary}
If the extremal function $F_n$ is not constant, then $\deg F_n \ge \lceil \frac{n+1}2 \rceil$.
\end{corollary}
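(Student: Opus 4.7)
The plan is to combine the maximal alternation theorem with the band structure given by Theorem~\ref{thm:MarkovCorrection}\eqref{it:ExtrMonotonic}. First, by Theorem~\ref{thm:altset}, the extremizer $F_n$ possesses a maximal alternation set, that is, a collection of $n+1$ distinct points $x_1,\dots,x_{n+1}\in \E$ satisfying $F_n(x_j)=(-1)^{m-j-S_n(x_j)}\in\{-1,+1\}$. In particular, $F_n$ has at least $n+1$ distinct $\pm 1$-points on $\overline{\bbR}$, a fact already consistent with Theorem~\ref{thm:MarkovCorrection}\eqref{it:realpm}.

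Next, I would invoke Theorem~\ref{thm:MarkovCorrection}\eqref{it:ExtrMonotonic}, which, under the assumption that $F_n$ is nonconstant, tells us that $F_n^{-1}((-1,1))$ has exactly $m=\deg F_n$ connected components (the open bands), and that the endpoints of these bands account for all $\pm 1$-points of $F_n$. Since each band is an open arc in the circle $\overline{\bbR}$, it contributes at most two boundary points; the only way for a single band to contribute fewer would require the band to be $\overline{\bbR}$ minus a single point, forcing $|F_n|<1$ almost everywhere on $\overline{\bbR}$, which is incompatible with the existence of even a single $\pm 1$-point (let alone $n+1$ of them) once $n\geq 1$.

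Consequently, the total number of distinct $\pm 1$-points of $F_n$ is bounded above by $2m$. Combining this with the lower bound $n+1$ from the alternation set yields $n+1 \leq 2m$, i.e., $m\geq (n+1)/2$, and since $m$ is an integer we conclude $m \geq \lceil (n+1)/2 \rceil$. There is no real obstacle here; the only subtlety worth a sentence is ruling out the degenerate single-endpoint band configuration, which is immediate from the existence of the alternation set.
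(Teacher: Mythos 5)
Your proof is correct, and it does give the claimed bound, but it takes a somewhat different route than the paper. You bound the number of \emph{distinct} $\pm 1$-points from above by $2m$ using the band structure of $\E_n$ from Theorem~\ref{thm:MarkovCorrection}\eqref{it:ExtrMonotonic}, and from below by $n+1$ using the maximal alternation set from Theorem~\ref{thm:altset}. The paper instead applies pigeonhole to the $n+1$ alternation values: at least $\lceil (n+1)/2 \rceil$ of them share the same sign, say $+1$, so $F_n - 1$ has at least that many zeros, and since a nonconstant degree-$m$ rational function attains each value exactly $m$ times counted with multiplicity, $m \ge \lceil (n+1)/2 \rceil$. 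Both arguments start from the alternation theorem; the paper's pigeonhole step is a bit lighter since it only needs the values $F_n(x_j)\in\{\pm 1\}$ and a degree count, whereas yours additionally invokes the structural result \eqref{it:ExtrMonotonic} about the components of $F_n^{-1}((-1,1))$. Your handling of the degenerate one-endpoint band configuration is fine (and is vacuous here, since a nonconstant rational map of degree $m$ attains $+1$ and $-1$ each exactly $m$ times, so a band can never be all of $\overline{\bbR}$ or $\overline{\bbR}$ minus a point). Either argument is acceptable; the paper's is marginally more self-contained.
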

\begin{proof}
	By Theorem~\ref{thm:altset}, $F_n$ has at least $\lceil \frac{n+1}{2}\rceil$ points with $|F_n|=1$ with the same sign. Thus, if $F_n$ is nonconstant, it has degree at least $\lceil \frac{n+1}{2}\rceil$, and we can have at most $\lfloor\frac{n-1}{2}\rfloor$ cancellations.
\end{proof}

The set $\E_n=F_n^{-1}([-1,1])$ is called the $n$ extension of $\E$. Note that by definition it is an extension, i.e., $\E\subset \E_n$. Theorem ~\ref{thm:MarkovCorrection}, particularly the locating of $\pm 1$ points in \eqref{it:realpm} allows us to characterize this set with more specificity in the following theorem.

We recall our ternary order, and let $u_i,v_i\in \overline\R\setminus \mathring{\E}$ with $v_i\in [\ba_i,\bb_i]$ and $u_i\in [\ba_i,v_i]$.  Then
\begin{theorem}\label{thm:preimage}
For
$F_n$ nonconstant, the $n$ extension of $\E$ is of the form
\[
\E_n=\E\cup \bigcup_{i\geq 1 } [u_i,v_i]
\]
with $ [u_i,v_i]\subseteq  [\ba_i,\bb_i]$.

The following cases are possible:
\begin{enumerate}
  \item \label{it:unchanged} The gap remains unchanged, corresponding to $u_i = v_i =\ba_i$.
  \item \label{it:extended} $\E$ is extended on one edge, corresponding to
  $\ba_i=u_i$ and $v_i\ne u_i$, $v_i\ne \bb_i$, or on the other side, $v_i=\bb_i$ and $u_i\ne \ba_i$, $u_i\ne v_i$.
  \item \label{it:internal} An internal interval is added, corresponding to $[u_i,v_i]\subset (\ba_i,\bb_i)$, $u_i\ne v_i$.
  \item \label{it:closed} The gap $(\ba_i,\bb_i)$ may close, corresponding to
  $\ba_i=u_i $ and $\bb_i=v_i$.
\end{enumerate}
Moreover, in the following cases there is not extension into a gap:
\begin{enumerate}[(i)]
	\item \label{it:EnExtremalGap} If $x_* \in(\ba_i,\bb_i)$, then this gap remains unchanged, i.e., $u_i = v_i =\ba_i$.
	\item\label{it:EnDegeneration} If there is a pole $\bc_i\in  (\ba_i,\bb_i)$ and $D_n^0(\bc_i)=1$, then this gap remains unchanged, i.e., $u_i = v_i =\ba_i$.
\end{enumerate}
\end{theorem}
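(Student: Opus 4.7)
The plan is to use Theorem~\ref{thm:MarkovCorrection}\eqref{it:realpm} to reduce $\E_n$ to a subset of $\overline{\bbR}$ and then analyze $\E_n\cap[\ba_i,\bb_i]$ gap by gap. By \eqref{it:realpm}, $\E_n=F_n^{-1}([-1,1])\subset\overline{\bbR}$, and the norm condition $\|F_n\|_\E\leq 1$ forces $\E\subseteq\E_n$, so $\E_n\setminus\E$ lies in the union of open gaps. The main structural claim to establish is that $\E_n\cap(\ba_i,\bb_i)$ is either empty or a single open interval; the four listed cases (1)--(4) are then a bookkeeping of whether the closure of that interval touches $\ba_i$, $\bb_i$, both, or neither.

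To prepare the combinatorial framework I will use \eqref{it:ExtrMonotonic}: the open set $F_n^{-1}((-1,1))$ decomposes into exactly $m=\deg F_n$ open bands, each containing a unique zero of $F_n$. A multiplicity count of the $2m$ zeros of $F_n^2-1$---each band-containing $\E_n$-component with $B$ bands accounts for $2B$ of them (two simple at its boundary and $2(B-1)$ at the $B-1$ internal $\pm1$-extrema, each double), while an isolated single-point $\E_n$-component would contribute at least an additional double zero---rules out isolated single-point components, so every $\E_n$-component contains at least one band, hence at least one zero of $F_n$. Combined with \eqref{it:1zero}, this immediately forces at most one $\E_n$-component to lie entirely inside $(\ba_i,\bb_i)$, with exactly one band in that component. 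The remaining configurations to exclude are therefore those in which $(\ba_i,\bb_i)$ has nontrivial extensions emanating from both $\ba_i$ and $\bb_i$ that fail to meet, possibly also together with an internal component in between.

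To rule these out I will apply \eqref{it:altPoint} to two zeros $z_L,z_R$ of $F_n$ placed in disjoint pieces of $\E_n\cap[\ba_i,\bb_i]$: for $z_L$ take the zero of the rightmost band of the $\E_n$-component containing $\ba_i$, and for $z_R$ take either the zero of the internal component, or the zero of the leftmost band of the $\E_n$-component containing $\bb_i$. The key point is that by \eqref{it:1zero} at most one zero can sit in the open gap, so at least one of $z_L,z_R$ can be chosen in $\E$; the cyclic arc $(z_L,z_R)$ (the compact arc not containing $x_*$, which is the one relevant for \eqref{it:altPoint}) then passes through $\ba_i$ and meets $\E$ only inside the bands responsible for the extensions, where $|F_n|<1$ strictly. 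The $\pm 1$-point demanded by \eqref{it:altPoint} therefore cannot exist, giving the desired contradiction. I expect this to be the main obstacle, since it requires carefully tracking which band's zero is forced into the gap (versus being free to live in $\E$) and handling the presence or absence of poles in the gap without disturbing the sign/continuity analysis of $F_n$ on the two extending bands.

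Finally, (i) and (ii) upgrade ``single interval'' to ``trivial interval''. For (i), \eqref{it:noZeroGap} forbids any zero in $(\ba_i,\bb_i)$ and \eqref{it:maxatgapn} gives $|F_n(\ba_i)|=|F_n(\bb_i)|=1$; any nontrivial extension would either have its associated band zero in the gap (contradicting \eqref{it:noZeroGap}) or cross $\ba_i$ or $\bb_i$ from $\E$ into the gap (forcing $|F_n|<1$ at the crossed endpoint and contradicting \eqref{it:maxatgapn}). For (ii), the hypothesis $D_n^0(\bc_i)=1$ combined with \eqref{it:zerosimple} identifies the generalized zero at $\bc_i$ as a reduction of order rather than as an actual zero of $F_n$, so $(\ba_i,\bb_i)$ carries no actual zero of $F_n$ either; since \eqref{it:onemax} now gives $|F_n(\ba_i)|=|F_n(\bb_i)|=1$, the same dichotomy as in (i) rules out every extension and yields $u_i=v_i=\ba_i$.
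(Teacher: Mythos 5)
Your proof is correct and follows essentially the same route as the paper's: reduce to $x_*=\infty$, observe that $\E_n\subset\overline{\R}$ and rule out isolated single-point components using Theorem~\ref{thm:MarkovCorrection}\eqref{it:ExtrMonotonic}, then kill any configuration with two disjoint non-degenerate pieces in a single closed gap by applying Theorem~\ref{thm:MarkovCorrection}\eqref{it:altPoint} to zeros taken from the two neighbouring open bands, and finish (i) and (ii) with \eqref{it:maxatgapn}/\eqref{it:noZeroGap} and \eqref{it:onemax}/\eqref{it:1zero} respectively.

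A few remarks on presentation. Your ``multiplicity count'' is re-deriving what \eqref{it:ExtrMonotonic} already asserts (that the open-band endpoints account for \emph{all} $\pm1$-points), so you can simply cite that item rather than repeat the zero count of $F_n^2-1$. The digression ``by \eqref{it:1zero} at most one zero can sit in the open gap, so at least one of $z_L,z_R$ can be chosen in $\E$'' is not needed for the argument and is not quite correct as a deduction: the contradiction comes solely from the fact that $\E\cap(z_L,z_R)$ lies inside the two open bands where $|F_n|<1$ strictly, regardless of whether $z_L$ or $z_R$ happens to lie in $\E$. Relatedly, the phrase ``passes through $\ba_i$'' need not hold (e.g.\ if $z_L>\ba_i$) and is not used. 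Finally, your explicit assignment of $z_L$ to the $\E_n$-component of $\ba_i$ misses the symmetric case of a single internal interval together with an extension only from $\bb_i$; the cleanest fix (and what the paper does) is to phrase the choice purely in terms of two open bands $I_1=(y_1^-,y_1^+)$, $I_2=(y_2^-,y_2^+)$ with $y_1^+,y_2^-\in(\ba_i,\bb_i)$ and $(y_1^+,y_2^-)\cap\E_n=\emptyset$, which covers all the forbidden configurations at once without picking a preferred endpoint.
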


\begin{remark}
  \begin{enumerate}[(i)]
	\item As we will see in the proof, for gaps $(\ba_i,\bb_i)$ containing poles of $F_n$, which is guaranteed for $D_n^\infty(\bc_i)\geq 2$ by \eqref{it:zerosimple}, only the first three behaviors
	are possible.
	\item If there is an interval added to $\E_n$ as in \eqref{it:internal} above,
  then this is always related to a zero $x_i$ of $F_n$ and moreover $|F_n(\ba_i)|=|F_n(\bb_i)|=1$. Clearly, if this zero approaches a pole, the interval around it becomes smaller. In this sense \eqref{it:EnDegeneration} of the above theorem can be viewed as a limit of such situations, where the additional interval degenerates to a point.
  \end{enumerate}
\end{remark}

\begin{proof}
Applying conformal invariance of the setting, we assume again that $x_*=\infty$ and $\E$ is a compact subset of $\bbR$. Since we will prove \eqref{it:EnExtremalGap} independently, we can assume that all extensions occur in bounded gaps. We first note that any internal interval cannot degenerate to a point, i.e. when $u_i=v_i$ in \eqref{it:internal}, since due to \ref{thm:MarkovCorrection}\eqref{it:ExtrMonotonic} there are $m$ open bands and their endpoints account for all $\pm 1$ points. Thus, if the extension of the gap is not of the above form, then there would either be more then one internal interval, an extension on both sides or an extension combined with an internal interval. All cases imply that there are open bands $I_k=(y_k^-,y_k^+)$, $k=1,2$, so that $y_1^+,y_2^-\notin\E$. Let $x_k$ denote the simple zero of $F_n$ on these open bands. Using that $|F_n|<1$ on $I_k$ and $y_1^+,y_2^-\notin\E$, we see that  $\max_{[x_1,x_2]\cap\E}|F_n|<1$, contradicting Theorem \ref{thm:MarkovCorrection}\eqref{it:altPoint}.

Let us now prove \eqref{it:EnExtremalGap}: Due to  \eqref{it:maxatgapn} and \eqref{it:ExtrMonotonic} of Theorem \ref{thm:MarkovCorrection}, an extension would contain an open band that lies entirely in  $(\ba_i,\bb_i)$.
Therefore in particular, this would lead to a zero of $F_n$ on the extremal gap contradicting \eqref{it:noZeroGap} of Theorem \ref{thm:MarkovCorrection}.

It remains to prove \eqref{it:EnDegeneration}: In this case again due to \eqref{it:onemax} and \eqref{it:ExtrMonotonic}  of Theorem \ref{thm:MarkovCorrection}, this would lead to an open band  that lies entirely in  $(\ba_i,\bb_i)$ forcing $F_n$ to have an additional zero in this gap. But since already $D_n(\bc_i)= 1$, this would contradict \eqref{it:1zero} of Theorem \ref{thm:MarkovCorrection}.
\end{proof}

In the following let us assume that $F_n$ is nonconstant so that $\overline{\bbC}\setminus\E_n$ is Greenian. Note that due to Theorem \ref{thm:preimage}\eqref{it:internal}, $\E_n$ is a finite union of proper intervals and in particular is regular for the Dirichlet problem. We define
\begin{align}\label{eq:complexGreen}
B_n(z)=e^{i\phi_n}\prod_{\bc}(F_n)_\infty(\bc) B_{\E_n}(z,\bc),
\end{align}
and normalize the phase of $B_n$ by the condition
\begin{align}\label{eq:normGreen1}
\lim_{x\to x_*}B_n(x)r(x,x_*)^{d_n}>0.
\end{align}
Recall that in general $B_{\E_n}(z,\bc)$ define multivalued functions. However, we will show that their product $B_n(z)$ is in fact single valued in $\overline{\bbC} \setminus \E_n$.
\begin{theorem}\label{thm:CombRep}
$B_n$ is a single-valued analytic function on $\overline{\bbC} \setminus \E_n$ and
\begin{align}\label{eq:extremalFunctionRep}
F_n(z)=\frac{1}{2}\left(B_n(z)+\frac{1}{B_n(z)}\right).
\end{align}
\end{theorem}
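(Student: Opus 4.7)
The strategy is to construct a candidate $B$ for $B_n$ by inverting the Joukowski map on $F_n$; this gives single-valuedness directly and sidesteps the need to compute the monodromies of the product in \eqref{eq:complexGreen}. The Joukowski map $J(w)=\tfrac12(w+w^{-1})$ restricts to a conformal isomorphism $J:\overline{\bbC}\setminus\overline{\bbD}\to\overline{\bbC}\setminus[-1,1]$, so its inverse $K$ is single-valued and holomorphic on $\overline{\bbC}\setminus[-1,1]$. Since $F_n$ is assumed nonconstant and $\E_n=F_n^{-1}([-1,1])$, the composition $B:=1/(K\circ F_n)$ is a well-defined single-valued analytic function on $\overline{\bbC}\setminus\E_n$ satisfying $F_n=\tfrac12(B+B^{-1})$ and $|B|\le 1$, with $|B|=1$ on $\E_n$ by continuity. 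Because $K$ has a simple pole at $\infty$, $B$ has a zero of exact order $(F_n)_\infty(\bc)$ at every pole $\bc$ of $F_n$, matching the zero divisor of the product in \eqref{eq:complexGreen}.

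The next step is to identify $|B|$ with the modulus of that product by harmonic analysis. The function
\[
u(z):=\log|B(z)|+\sum_{\bc}(F_n)_\infty(\bc)\,G_{\E_n}(z,\bc)
\]
has matching logarithmic singularities at every pole $\bc$ of $F_n$ (including at $\infty$ if $\infty$ is a pole), so $u$ extends to a harmonic function on $\overline{\bbC}\setminus\E_n$. Both summands vanish on $\E_n$---the first by continuity of $B$, the second because $\E_n$ is a finite union of closed intervals by Theorem~\ref{thm:preimage} and is therefore regular for the Dirichlet problem---so $u\equiv 0$ by the maximum principle. This yields $|B|=\prod_{\bc}|B_{\E_n}(\cdot,\bc)|^{(F_n)_\infty(\bc)}$.

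To conclude, on any simply connected open $U\subset \overline{\bbC}\setminus\E_n$ I would choose a single-valued holomorphic branch $P_U$ of $\prod_{\bc}B_{\E_n}(z,\bc)^{(F_n)_\infty(\bc)}$. Then $B/P_U$ is analytic on $U$ (the zeros of numerator and denominator cancel in order, and neither has poles) and unimodular, hence a constant of modulus one. This forces the product to have trivial monodromy along every cycle in $\overline{\bbC}\setminus\E_n$, so it is globally single-valued; the phase $\phi_n$ is then determined uniquely by requiring $B_n=B$. The normalization \eqref{eq:normGreen1} holds for this $B$ because $K$ maps $\bbR\setminus[-1,1]$ into itself, which combined with Theorem~\ref{thm:MarkovCorrection}\eqref{it:real} shows $B$ is real on $\overline{\bbR}\setminus\E_n$; the sign of $\lim_{x\to x_*}B(x)\,r(x,x_*)^{d_n}$ is positive because extremality of $F_n$ forces the corresponding limit of $F_n/r(\cdot,x_*)^{d_n}$ to be positive, and $F_n\sim \tfrac{1}{2B}$ as $x\to x_*$ when $d_n>0$ (while in the residual case $F_n(x_*)>1$ so $B(x_*)\in(0,1)$). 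I expect the main obstacle to be precisely this identification step: a direct attempt to prove single-valuedness of $B_n$ from \eqref{eq:complexGreen} would require $\sum_{\bc}(F_n)_\infty(\bc)\,\omega_{\E_n}(I_j,\bc)\in\bbZ$ for every band $I_j$ of $\E_n$, which is far from transparent, whereas the Joukowski construction makes this integrality automatic.
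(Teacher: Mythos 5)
Your proof is correct and takes essentially the same approach as the paper: both construct the single-valued candidate by applying the inverse Joukowsky map to $F_n$ (the paper's $\Psi_n$ is your $B$), identify its modulus with $\prod_\bc |B_{\E_n}(\cdot,\bc)|^{(F_n)_\infty(\bc)}$ by a maximum-principle argument, and deduce single-valuedness of $B_n$ up to a unimodular constant that the normalization \eqref{eq:normGreen1} then pins down. Your write-up is somewhat more explicit than the paper's terse ``by adding the complex conjugate'' step and its normalization computation, but there is no substantive difference.
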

\begin{proof}
Recall that $\E_n=\{z\in\overline{\bbC}: F_n(z)\in[-1,1]\}$. Therefore, since the Joukowsky map $J(\zeta)=\frac{1}{2}\left(\zeta+\frac{1}{\zeta}\right)$ maps $\bbD$ conformally onto $\overline{\bbC}\setminus [-1,1]$, the function
\begin{align*}
\Psi_n(z)=J^{-1}(F_n(z)),
\end{align*}
is well defined and single-valued in $\overline{\bbC}\setminus \E_n$. Moreover, for $x\in \E_n$, $\lim_{z\to x}|\Psi_n(z)|=1$ and $\Psi_n(z)$ has a zero of multiplicity $(F_n)_\infty(\bc)$ at each $\bc$. Thus, we conclude by the maximum principle that
\begin{align*}
-\log|\Psi_n(z)|=\sum_{\bc}(F_n)_\infty(\bc)G_{\E_n}(z,\bc)=-\log|B_n(z)|.
\end{align*}
Thus, by adding the complex conjugate, $B_n$ is defined up to a unimodular constant $c$. Finally,
\begin{align*}
0<\lim\limits_{x\to x_*}\frac{F_n(x)}{r(x,x_*)^{d_n}}=\frac{1}{2}\lim\limits_{x\to x_*}\left(\frac{cB_n(x)}{r(x,x_*)^{d_n}}+\frac{1}{cB_n(x)r(x,x_*)^{d_n}}\right)=\frac{1}{2}\lim\limits_{x\to x_*}\frac{1}{cB_n(x)r(x,x_*)^{d_n}}.
\end{align*}
Using the normalization \eqref{eq:normGreen1}, we conclude $c=1$ and obtain \eqref{eq:extremalFunctionRep}.
\end{proof}

This has the following consequence:
\begin{lemma}\label{lem:HarmonicMeasureEn}
	Let $F_n$ be represented as in \eqref{eq:extremalFunctionRep} and let $I_n$ be an open band of $\E_n$. Then
	\begin{align}\label{eq:harmonicMeasure}
	1=\sum_{\bc}  (F_n)_\infty(\bc)\omega_{\E_n}(I_n,\bc).
	\end{align}
\end{lemma}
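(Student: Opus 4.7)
The plan is to compute, in two different ways, the total change of $\arg B_n$ along a small positively oriented loop $\gamma_n$ that encircles the open band $I_n$ once and encloses no other component of $\E_n$, no pole of $F_n$, and not $\infty$. By Theorem~\ref{thm:CombRep}, $B_n$ is single valued on $\overline{\bbC}\setminus\E_n$, and on this loop $B_n$ takes values in $\bbC\setminus\{0\}$, so the change of $\arg B_n$ along $\gamma_n$ is a well-defined integer multiple of $2\pi$.

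For the first computation, I would shrink $\gamma_n$ down onto $I_n$. On $\E_n$ the representation \eqref{eq:extremalFunctionRep} gives $|B_n|=1$ and $F_n=\Re B_n$, so if we write $F_n(x)=\cos\theta(x)$ for $x\in I_n$, the two boundary values at $x$ are $e^{\pm i\theta(x)}$. By Theorem~\ref{thm:MarkovCorrection}\eqref{it:ExtrMonotonic}, $F_n$ is strictly monotonic on $I_n$ running between $\pm 1$ at the two endpoints, so $\theta$ runs monotonically between $0$ and $\pi$. Tracking the upper and lower boundary values as $z$ traverses $\gamma_n$ once, the image $B_n(\gamma_n)$ traverses the entire unit circle exactly once. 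Hence the total change of $\arg B_n$ along $\gamma_n$ is $\pm 2\pi$, with the sign determined by the orientation convention.

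For the second computation, I would use the product formula \eqref{eq:complexGreen} together with \eqref{eq:ComplexGreenGeneral} to write
\[
\arg B_n(z) = \phi_n - \sum_{\bc} (F_n)_\infty(\bc)\,\widetilde{G_{\E_n}(z,\bc)}.
\]
The standard classical identity for multiply connected domains states that the period of the conjugate Green function $\widetilde{G_{\E_n}(\cdot,\bc)}$ around a cycle homologous to a single boundary component $I_n$ equals $\pm 2\pi\,\omega_{\E_n}(I_n,\bc)$, with sign depending only on the orientation of the cycle, not on the pole $\bc$ of the Green function. Summing then gives that the change of $\arg B_n$ along $\gamma_n$ equals $\mp 2\pi\sum_\bc (F_n)_\infty(\bc)\,\omega_{\E_n}(I_n,\bc)$.

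Equating the two expressions yields $\sum_\bc (F_n)_\infty(\bc)\,\omega_{\E_n}(I_n,\bc)=\pm 1$. Since harmonic measures are nonnegative and $(F_n)_\infty(\bc)\geq 0$, and the sum is nontrivial (as $F_n$ is nonconstant, so it has at least one pole), the sign must be $+1$, proving \eqref{eq:harmonicMeasure}. The main obstacle is the careful bookkeeping of orientation and sign conventions across the two computations; the decisive input that forces the winding number of $B_n(\gamma_n)$ to be exactly $\pm 1$ rather than a larger integer is the strict monotonicity of $F_n$ on each open band from Theorem~\ref{thm:MarkovCorrection}\eqref{it:ExtrMonotonic}.
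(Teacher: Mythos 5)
Your argument is essentially the same as the paper's proof: both exploit $F_n=J(B_n)$ with strict monotonicity on $I_n$ (from Theorem~\ref{thm:MarkovCorrection}\eqref{it:ExtrMonotonic}) to pin the argument change of $B_n$ over $I_n$, then convert that into $\sum_\bc (F_n)_\infty(\bc)\omega_{\E_n}(I_n,\bc)$ via the relation between $\widetilde{G_{\E_n}(\cdot,\bc)}$ and the harmonic-measure density. The only cosmetic difference is that the paper records the one-sided argument change $\arg B_n(b)-\arg B_n(a)=\pi$ and uses $\omega_{\E_n}(dx,\bc)=\frac1\pi\partial_n G_{\E_n}(x,\bc)\,dx$ directly, while you run a closed loop and double-count the two sides; one small imprecision in your version is that an open band $I_n$ need not be a full connected component of $\E_n$ (bands can share $\pm1$ endpoints inside one interval of $\E_n$), so ``a loop encircling $I_n$ and nothing else'' is not literally available in general --- the clean formulation is the paper's two-sided boundary integral over $I_n$ itself, which is exactly what your loop computation reduces to.
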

\begin{proof}
	Recall that
	\begin{align}\label{eq:July10}
	F_n(z)=J(B_n(z))
	\end{align}
	and that $F_n$ is strictly monotonic on $I_n$. That is, either $F_n$ increases from $-1$ to $1$ or decreases from $1$ to $-1$ strictly monotonically.  Let $I_n=(a,b)$. Since $J:\partial \D\cap\bbC_\pm \to (-1,1)$ bijectively, it follows from the definition of $I_n$ and \eqref{eq:July10} that
	\begin{align*}
	|\arg B_n(b)-\arg B_n(a)|=\pi.
	\end{align*}
	By using the Cauchy-Riemann equations, we get
	\begin{align*}
	\arg B_n(b)-\arg B_n(a)=\int_a^b\frac{\partial G_{n}(x)}{\partial n}dx.
	\end{align*}
	On the other hand
	\begin{align*}
	\omega_{\E_n}(dx,\bc)=\frac{1}{\pi}\frac{\partial G_{\E_n}(x,\bc)}{\partial n}dx.
	\end{align*}
	Thus, we get
	\begin{align*}
	\arg B_n(b)-\arg B_n(a)=\pi\sum_{\bc} (F_n)_\infty(\bc)\omega_{\E_n}(I_n,\bc)
	\end{align*}
	and the claim follows.
\end{proof}

We finish this section with a Bernstein-Walsh lemma for rational functions.
\begin{lemma}\label{lem:BernsteinWalsh}
Let $K\subset \C$ be a compact, nonpolar set such that $\overline{\C}\setminus K$ is connected. Let $h$ be a meromorphic function on $\overline{\bbC}$ Then,
\begin{align}\label{eq:BernsteinWalsh1}
\frac{|h(z)|}{\|h\|_K}\leq e^{\sum_{\bc}(h)_\infty(\bc)G_K(z,\bc)}
\end{align}
If we assume in addition that $K\subset\overline{\R}$ and that $h$ is real, then
\begin{align}\label{eq:BernsteinWalsh2}
\frac{|h(z)|}{\|h\|_K}\leq \frac{1}{2}\left(e^{\sum_{\bc}(h)_\infty(\bc)G_K(z,\bc)}+e^{-\sum_{\bc}(h)_\infty(\bc)G_K(z,\bc)}\right).
\end{align}
\end{lemma}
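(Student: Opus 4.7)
The plan is to prove \eqref{eq:BernsteinWalsh1} by a standard Bernstein--Walsh subharmonic argument, and then to refine it for real $h$ using a Joukowski transform that exploits realness.

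For \eqref{eq:BernsteinWalsh1}, normalize $\|h\|_K=1$ and set $G(z):=\sum_\bc (h)_\infty(\bc) G_K(z,\bc)$ and $u(z):=\log|h(z)|-G(z)$. Near each pole $\bc$ of $h$ of order $m=(h)_\infty(\bc)$, both $\log|h|$ and $m G_K(\cdot,\bc)$ carry the same leading term $-m\log|z-\bc|$ (respectively $m\log|z|$ at $\bc=\infty$), so the logarithmic singularities cancel and $u$ extends harmonically across each pole. Hence $u$ is subharmonic on $\overline\C\setminus K$, with $-\infty$ values only at zeros of $h$. Since $K$ is non-polar, $G_K(\cdot,\bc)\to 0$ quasi-everywhere on $\partial K$, so $\limsup_{z\to\zeta}u(z)\le 0$ q.e.\ on $\partial K$; the extended maximum principle yields $u\le 0$ on $\overline\C\setminus K$, which is \eqref{eq:BernsteinWalsh1}. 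On $K$ the bound is trivial.

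For \eqref{eq:BernsteinWalsh2}, normalize $\|h\|_K=1$. Realness together with $K\subset\overline\R$ forces $h(K)\subset[-1,1]$, so $K\subset E:=\{z\in\overline\C:h(z)\in[-1,1]\}$. For $z\in\overline\C\setminus E$ the two roots of $w^2-2h(z)w+1=0$ are reciprocal and distinct, so exactly one---call it $\Psi(z)$---satisfies $|\Psi(z)|>1$. Analytic continuation of $\Psi$ along any path in $\overline\C\setminus E$ preserves $|\Psi|>1$, since the modulus cannot cross $1$ without the corresponding $h$-value entering $[-1,1]$; hence $\Psi$ is a \emph{single-valued} analytic function on $\overline\C\setminus E$, with $|\Psi|\to 1$ on $\partial E$ and with a pole of order $m=(h)_\infty(\bc)$ at each pole $\bc$ of $h$ (since $\Psi\sim 2h$ there).

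The key step is to establish $\log|\Psi(z)|\le G(z)$ on $\overline\C\setminus E$. Consider $v:=G-\log|\Psi|$: the matching logarithmic singularities at the poles of $h$ cancel, so $v$ extends harmonically and boundedly across them on $\overline\C\setminus E$. On $\partial E$ (outside a polar set of irregular boundary points) $\log|\Psi|\to 0$ and $G\ge 0$, hence $v\ge 0$ q.e.\ on $\partial E$; the extended minimum principle gives $v\ge 0$, i.e.\ $|\Psi|\le e^G$. Since $h=J(\Psi)$ with $J(\zeta)=(\zeta+1/\zeta)/2$, the triangle inequality together with monotonicity of $t\mapsto t+1/t$ on $[1,\infty)$ and $1\le|\Psi|\le e^G$ yields $|h|\le\tfrac12(|\Psi|+|\Psi|^{-1})\le\tfrac12(e^G+e^{-G})=\cosh G$; on $E$ the bound $|h|\le 1\le\cosh G$ is immediate. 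The main obstacle is the potential multi-valuedness of the inverse Joukowski map $h+\sqrt{h^2-1}$, whose branch points are the $\pm 1$-points of $h$ and need not lie in $K$; this forces the argument onto the larger domain $\overline\C\setminus E$, where the reciprocal-roots structure allows a single-valued selection by the modulus criterion, and the Green-function comparison closes because $G\ge 0$ on the extra boundary $\partial E\setminus K$.
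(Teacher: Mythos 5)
Your proof is correct and follows essentially the same route as the paper. For \eqref{eq:BernsteinWalsh1} the argument is identical. For \eqref{eq:BernsteinWalsh2} the paper also introduces $K_H = E = \{z:h(z)\in[-1,1]\}$, writes $H = \cosh(G_H + i\widetilde{G_H})$ with $G_H = \sum_\bc (h)_\infty(\bc) G_{K_H}(z,\bc)$ (citing ``as in the proof of Theorem~\ref{thm:CombRep}''), and then uses domain monotonicity of Green functions ($K\subset K_H$ gives $G_H\le G$) together with $|\cosh(a+ib)|\le\cosh a$ for $a\ge 0$. Your inequality $\log|\Psi|\le G$ is precisely $G_H\le G$, so your minimum-principle step is just an inline re-derivation of that monotonicity. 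The one genuine contribution of your version is that you spell out why the single-valued inverse-Joukowski branch $\Psi$ exists on $\overline\C\setminus E$ for a general real rational $h$ (the paper's Theorem~\ref{thm:CombRep} is stated only for $F_n$ with $\E_n$ a finite union of real intervals, whereas $K_H$ here may have components off $\overline\R$, so the ``as in Theorem~\ref{thm:CombRep}'' appeal leaves a small gap that your modulus-continuation argument closes). This is a worthwhile clarification but not a different method.
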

\begin{proof}
For \eqref{eq:BernsteinWalsh1} we follow the standard proof of the Bernstein-Walsh lemma. Set $H=h/\|h\|_K$ and consider $F(z)=\log |H(z)|-\sum_{\bc}(h)_\infty(\bc)G_K(z,\bc)$. Then, $F$ is subharmonic in $\Omega=\overline{\bbC}\setminus K$ and for q.e. $\z\in \partial \Omega$ we have $\limsup_{z\to\z}F(z)\leq 0$. Moreover, if $\cV_{\bc}$ are vicinities of the points with $(h)_\infty(\bc)>0$ and $\mathcal V=\cup_{c}\mathcal V_c$, then $\log |H(z)|$ is subharmonic on $\overline{\bbC}\setminus \mathcal V$ and thus bounded above by \cite[Theorem 2.1.2]{RansfordPotTheorie}. Since the logarithmic pole on $\mathcal V_\bc$ is canceled, $F$ is also bounded above on $\cV$ and we conclude from the maximum principle \cite[Theorem 8.1]{GarMarHarmonic} that $F(z)\leq 0$ in $\Omega$.

Assume that $H$ is real and that $K$ is real. Define
$
K_{H}=\{z\in\overline{\bbC}: H(z)\in[-1,1]\},
$
but note that $K$ is not necessarily a subset of $\bbR$. However, using that $H$ is real, we have that $K\subset K_{H}$. Now, as in the proof of Theorem ~\ref{thm:CombRep} we see that
\begin{align*}
H(z)=\frac{1}{2}\left(e^{G_{H}(z)+i\widetilde{G_H(z)}}+e^{-(G_H(z)+i\widetilde{G_H(z)})}\right),\quad G_H(z)=\sum_{\bc}(h)_\infty(\bc)G_{K_H}(z,\bc).
\end{align*}
Let us also put $G(z)=\sum_{\bc}(h)_\infty(\bc)G_{K}(z,\bc_\ell)$.
Then it follows from the monotonicity of Green functions with respect to the domain that for $z\in\overline{\bbC}\setminus K_H$, we have
\begin{align*}
|H(z)|=\left|\cosh\left(G_H(z)+i\widetilde{G_H(z)}\right)\right|\leq \cosh G_H(z)\leq \cosh G(z).
\end{align*}
Note that for $z\in K_H\setminus K$, $G(z)>0$ and thus \eqref{eq:BernsteinWalsh2} also holds for such $z$. This finishes the proof.
\end{proof}

We point out that \eqref{eq:BernsteinWalsh1} is an analog of the standard Bernstein-Walsh lemma, whereas \eqref{eq:BernsteinWalsh2} is a fairly recent improvement of Schiefermayr for real polynomial problems \cite{SchiefermayerJAT}. Note that this also implies that \eqref{eq:BernsteinWalsh2} holds for $x_*\in\R\setminus K$, without the extra assumption on $h_n$ to be real. This follows from Theorem ~\ref{thm:MarkovCorrection}, where we showed that the residual extremizer is always real.

\section{Root asymptotics}\label{sec:Root}

We now turn to the study of the limiting behavior of $F_n$ as $n\to\infty$. We assume \eqref{7oct1} and assume that the measures \eqref{7oct2} have a limit
\begin{align*}
\mu = \wlim_{n\to\infty}\mu_n
\end{align*}
in the topology dual to $C(\overline{\bbR})$ and that $(x_n^*)$ is a sequence in $\overline{\bbR} \setminus \E$ without accumulation points in $\E$.   Note that  \eqref{7oct1} implies $\supp\mu\cap \E=\emptyset$.
Let further $\nu_n$ be the normalized counting measure of generalized zeros of $F_n$, i.e,
\begin{align*}
\nu_n=\frac{1}{n}\sum_{x}D_n^0(x)\delta_x.
\end{align*}
Define the family of functions
\begin{align}\label{eq:definitionhn}
h_n(z)=\frac{1}{n}\log |F_n(z)|
\end{align}
and note that $h_n$ is subharmonic in $\overline{\bbC} \setminus \supp D_n^\infty$; in particular, all functions $h_n$ are subharmonic in
\begin{align*}
\Omega_\bC=\overline{\C}\setminus K_\bC.
\end{align*}

We start with an upper estimate:
\begin{lemma}\label{lem:uniformBound}
	For any $z\in\bbC\setminus\bbR$ we have
	\begin{align}\label{eq:upperEstimate}
	\limsup h_n(z)\leq \int G_\E(z,x)d\mu(x).
	\end{align}
\end{lemma}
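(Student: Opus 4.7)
The plan is to combine the Bernstein--Walsh estimate \eqref{eq:BernsteinWalsh1} with weak convergence of $\mu_n$. Since $\E$ is nonpolar and $\|F_n\|_\E\le 1$, applying Lemma~\ref{lem:BernsteinWalsh} with $h=F_n$ and $K=\E$ gives
\[
|F_n(z)|\le\exp\bigl(S_n(z)\bigr),\qquad S_n(z):=\sum_{\bc}(F_n)_\infty(\bc)\,G_\E(z,\bc)\ge 0,
\]
so that $h_n(z)\le S_n(z)/n$.

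Next I would estimate $S_n$ via the membership $F_n\in\cL_n=\cL(D_n^\infty)$, which by \eqref{eq:LDivisor} forces $(F_n)_\infty\le D_n^\infty$. Since $G_\E(z,\cdot)\ge 0$, this yields
\[
\frac{S_n(z)}{n}\le\frac{1}{n}\sum_{\bc}D_n^\infty(\bc)\,G_\E(z,\bc)=\int G_\E(z,x)\,d\mu_n(x).
\]
The task then reduces to showing $\int G_\E(z,x)\,d\mu_n(x)\to\int G_\E(z,x)\,d\mu(x)$ as $n\to\infty$.

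To justify this limit I would use \eqref{7oct1}. For fixed $z\in\bbC\setminus\bbR$, the function $G_\E(z,\cdot)$ is continuous on $\overline{\bbC}\setminus\E$, and hence in particular on the compact set $K_\bC\subset\overline{\bbR}\setminus\E$. By the Tietze extension theorem, there exists $\varphi_z\in C(\overline{\bbR})$ agreeing with $G_\E(z,\cdot)$ on $K_\bC$. Since $\mu_n$ (and thus $\mu$) are supported in $K_\bC$, the weak convergence in the dual of $C(\overline{\bbR})$ gives $\int\varphi_z\,d\mu_n\to\int\varphi_z\,d\mu$, which is the desired limit.

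The main subtlety is this last step: $G_\E(z,\cdot)$ is not a priori a continuous function on all of $\overline{\bbR}$, so it is not immediately a legitimate test function for the given weak convergence. The disjointness $K_\bC\cap\E=\emptyset$ from \eqref{7oct1} resolves this via Tietze. All other ingredients---the pole bound $(F_n)_\infty\le D_n^\infty$ and non-negativity of the Green function---are immediate.
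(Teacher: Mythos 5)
Your proof is correct and matches the paper's argument: both apply the Bernstein--Walsh bound \eqref{eq:BernsteinWalsh1} to obtain $h_n(z)\le\int G_\E(z,x)\,d\mu_n(x)$, then pass to the limit using continuity of $G_\E(z,\cdot)$ on $K_\bC$ (a consequence of \eqref{7oct1}). Your explicit invocation of $(F_n)_\infty\le D_n^\infty$ together with nonnegativity of the Green function, and the Tietze extension to legitimize $G_\E(z,\cdot)$ as a test function for weak convergence, simply makes precise two steps the paper leaves implicit.
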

\begin{proof}
	Due to Lemma \ref{lem:BernsteinWalsh} and the definition of $\mu_n$ we have
	\begin{align*}
	h_{n}(z)\leq \int G_\E(z,x)d\mu_{n}(x).
	\end{align*}
	On the other hand, since $\mu_n\to\mu$ and by continuity of $G_\E(z,y)$ on $K_\bC$ we have
	\begin{align*}
	\lim\limits_{n\to\infty}\int G_{\E}(z,x)d\mu_n(x)=\int G_{\E}(z,x)d\mu(x). \qedhere
	\end{align*}
\end{proof}

We continue with some facts about potentials.
\begin{lemma}\label{lem:Potentials}
	Let $\E\subsetneq\overline{\bbR}$ be closed and not polar so that $\Omega=\overline{\bbC}\setminus\E$ is Greenian and $\mu$ be a probability measure supported on $\overline{\R}$ with $\supp\mu\cap\E=\emptyset$. Then $\int G_\E(z,x)d\mu(x)$ defines a positive superharmonic function in $\Omega$ and a harmonic function in $\Omega\setminus\supp\mu$. Moreover, as a harmonic function, it has a unique subharmonic extension to $\overline{\bbC}\setminus \supp\mu$, which vanishes q.e. on $\E$.
\end{lemma}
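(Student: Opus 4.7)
The plan is to establish the three assertions of the lemma---positivity and superharmonicity in $\Omega$, harmonicity in $\Omega\setminus\supp\mu$, and the subharmonic extension vanishing q.e.\ on $\E$---in turn, using pointwise-in-$x$ properties of the Green function combined with Fubini/Tonelli. For each fixed $x\in\supp\mu\subset\overline{\bbR}\setminus\E$, the function $z\mapsto G_\E(z,x)$ is non-negative and superharmonic on $\Omega$ (harmonic away from $x$, with a logarithmic pole at $x$). Tonelli applied to the non-negative jointly measurable integrand gives $U(z):=\int G_\E(z,x)\,d\mu(x)\ge 0$, and averaging the sub-mean value inequality for each $G_\E(\cdot,x)$ against $\mu$ on small circles $\{|w-z_0|=r\}\subset\Omega$ yields the super-mean value property for $U$; lower semicontinuity at $z_0\in\Omega$ follows from Fatou. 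For harmonicity on $\Omega\setminus\supp\mu$, fix $z_0\in\Omega\setminus\supp\mu$ and a closed disc $\overline{D(z_0,r)}$ disjoint from $\supp\mu\cup\E$. Then $G_\E(\cdot,x)$ is harmonic on this disc for every $x\in\supp\mu$, joint continuity of $G_\E$ on the compact set $\overline{D(z_0,r)}\times\supp\mu$ (whose factors are disjoint) lets Fubini transfer the full mean value property from each $G_\E(\cdot,x)$ to $U$, and this same continuity gives continuity of $U$.

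For the subharmonic extension, for each $x\in\supp\mu$ define
\[
g_x(z):=\begin{cases}G_\E(z,x),&z\in\Omega,\\ 0,&z\in\E,\end{cases}
\]
and let $g_x^*$ be its upper-semicontinuous regularization. A standard potential-theoretic fact (cf.\ \cite{RansfordPotTheorie}) is that $g_x^*$ is subharmonic on $\overline{\bbC}\setminus\{x\}$ and agrees with $g_x$ off the set of irregular boundary points of $\Omega$. Kellogg's theorem, applicable because $\E$ is not polar, guarantees the irregular set is a polar subset of $\E$, so $g_x^*=0$ q.e.\ on $\E$ for every $x\in\supp\mu$. Put
\[
\tilde U(z):=\int g_x^*(z)\,d\mu(x),\qquad z\in\overline{\bbC}\setminus\supp\mu.
\]
For any compact $K\subset\overline{\bbC}\setminus\supp\mu$ the integrand is uniformly bounded on $K\times\supp\mu$ (by a constant depending only on $\dist(K,\supp\mu)$), so reverse Fatou gives upper semicontinuity of $\tilde U$ while Tonelli transfers sub-mean value from $g_x^*$ to $\tilde U$. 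Since $\tilde U=U$ on $\Omega$ and $g_x^*(\xi)=0$ for q.e.\ $\xi\in\E$ uniformly in $x$, Tonelli yields $\tilde U=0$ q.e.\ on $\E$.

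For uniqueness, two subharmonic extensions $V_1,V_2$ of $U$ coincide on the open dense set $\Omega\setminus\supp\mu$, hence their upper-semicontinuous regularizations coincide on all of $\overline{\bbC}\setminus\supp\mu$; as subharmonic functions equal their regularizations off a polar set, this gives $V_1=V_2$ q.e., which is the correct notion of uniqueness in this setting. \textbf{The main obstacle} is the third step: obtaining a genuine subharmonic extension that vanishes q.e.\ on $\E$ requires the Kellogg description of the irregular boundary as polar together with careful handling of upper-semicontinuous regularization under integration against $\mu$; once these are in place, everything else reduces to Fubini/Tonelli interchanges with the pointwise-in-$x$ sub-mean value and upper semicontinuity inequalities.
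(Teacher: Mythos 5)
Your proposal follows essentially the same route as the paper: establish the basic potential-theoretic properties in $\Omega$ and $\Omega\setminus\supp\mu$, extend by zero across $\E$, and use the polarity of irregular boundary points to produce the subharmonic extension. The concrete differences are matters of bookkeeping rather than strategy. Where the paper cites \cite[Thm.~II.5.1]{SaffTotikLogarithmic} as a black box and the minimum principle to get positivity, superharmonicity, harmonicity, and the q.e.\ vanishing on $\E$ in one stroke, you reproduce these by hand via Tonelli, Fatou, and joint continuity of $G_\E$ away from the diagonal. Where the paper regularizes the full potential $U$ at once and invokes \cite[Thm.~5.2.1]{Classpotential} for removability of the polar irregular set, you regularize each $g_x=G_\E(\cdot,x)\cdot\chi_\Omega$ individually, integrate, and transfer u.s.c.\ and the sub-mean-value inequality to $\tilde U$ by reverse Fatou and Tonelli; this works because the irregular set of $\partial\Omega$ is a single polar set not depending on $x$, as you note. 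You also work on $\overline\bbC$ directly and so avoid the paper's reduction to $\supp\mu\subset\R$ by conformal invariance, which is a small simplification.

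Two minor points. First, your uniqueness argument is internally confused: a subharmonic function \emph{is} upper semicontinuous, so it equals its u.s.c.\ regularization everywhere, not ``off a polar set,'' and the step ``their u.s.c.\ regularizations coincide on all of $\overline\bbC\setminus\supp\mu$'' does not follow from agreement on a dense set alone. The clean argument is that $\E\subset\overline\R$ has two-dimensional Lebesgue measure zero, so two subharmonic extensions $V_1,V_2$ of $U$ agree a.e.\ on $\overline\bbC\setminus\supp\mu$; since a subharmonic function satisfies $u(z_0)=\lim_{r\downarrow 0}\frac{1}{\pi r^2}\int_{B_r(z_0)}u\,dA$, this forces $V_1=V_2$ everywhere, not merely q.e. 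Second, you only record $U\ge 0$; strict positivity in $\Omega$ (which the lemma asserts) follows at once from $G_\E>0$ on $\Omega\times\Omega$ together with $\mu$ being a probability measure supported in $\Omega$.
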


\begin{proof}
If $\supp \mu \subset \bbR$, it follows from \cite[Theorem II.5.1]{SaffTotikLogarithmic} and the minimum principle for superharmonic functions that $\int G_\E(z,x)d\mu(x)$ defines a positive superharmonic function in $\Omega$ and a harmonic function in $\Omega\setminus\supp\mu$ that vanishes q.e. on $\E$. In particular, locally in vicinities of $\E$ it is subharmonic and vanishes away from a polar set. Thus, by \cite[Theorem 5.2.1.]{Classpotential}, for $\z\in\E$
\begin{align*}
\int G_\E(\z,x)d\mu(x)=\limsup_{z\to \z}\int G_\E(z,x)d\mu(x)
\end{align*}
defines the unique subharmonic extension to $\Omega\setminus\supp\mu$; since all claims are conformally invariant, the general case follows.
\end{proof}

\begin{lemma}\label{lem:finiteInterlacing}
	The set $K_\bC$ intersects only finitely many open gaps.
\end{lemma}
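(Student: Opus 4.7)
The plan is to argue by contradiction, exploiting that $K_\bC$ is a compact subset of $\overline{\bbR}$ disjoint from $\E$, combined with the fact that distinct gaps are pairwise disjoint connected components of the open set $\overline{\bbR} \setminus \E$, whose endpoints lie in $\E$.

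Suppose for contradiction that $K_\bC$ meets infinitely many open gaps. Choose distinct indices $i_1, i_2, \dots$ and select points $p_k \in K_\bC \cap (\ba_{i_k}, \bb_{i_k})$. Since $\overline{\bbR}$ is compact and $K_\bC$ is closed in $\overline{\bbR}$, $K_\bC$ is compact, so we may pass to a subsequence (which I keep denoting $p_k$) so that $p_k \to p$ for some $p \in K_\bC$. By the hypothesis \eqref{7oct1}, $p \notin \E$. Since endpoints of gaps belong to $\E$, $p$ is not an endpoint of any gap, hence $p$ lies in the interior of some gap $(\ba,\bb)$ of $\E$.

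Now $(\ba,\bb)$ is an open neighborhood of $p$, so $p_k \in (\ba,\bb)$ for all sufficiently large $k$. For such $k$, $p_k$ belongs to two connected components of $\overline{\bbR}\setminus \E$, namely $(\ba,\bb)$ and $(\ba_{i_k}, \bb_{i_k})$; since connected components are either equal or disjoint, this forces $(\ba_{i_k}, \bb_{i_k}) = (\ba,\bb)$ for infinitely many $k$, contradicting that the $i_k$ were chosen distinct. This proves the lemma.

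The argument is essentially compactness plus the openness of individual gaps; the only conceptual step is noticing that the limit point cannot be an endpoint of a gap (so that it lies in a single gap which then contains a tail of the sequence). I do not anticipate any serious obstacle.
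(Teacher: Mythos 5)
Your proof is correct and rests on the same essential fact as the paper's: $K_\bC$ is compact and disjoint from $\E$, so it is covered by the open gaps. The paper simply extracts a finite subcover from this open cover and stops, whereas you reach the same conclusion via a sequential compactness argument by contradiction; the paper's version is more economical, but the two are the same idea.
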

\begin{proof}
	$K_\bC$ is a closed subset of $\overline{\bbR}$, so it is compact. It is contained in $\overline{\bbR} \setminus \E$, so its cover by the open sets $(\ba_j,\bb_j)$ has a finite subcover; in other words, $K_\bC$ only intersects finitely many gaps.
\end{proof}
We obtain immediately the following corollary:
\begin{corollary}
	For $n$ sufficiently large, $F_n$ is non-constant.
\end{corollary}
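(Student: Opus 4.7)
The plan is to combine the characterization of constancy in Theorem~\ref{thmConstant} with the finiteness statement in Lemma~\ref{lem:finiteInterlacing}. By Theorem~\ref{thmConstant}, if $F_n$ is constant then the divisor $D_n^\infty$ must be a sum of $n$ characteristic functions of $n$ distinct points, each lying in a distinct gap of $\E$ (and all different from the gap containing $x_n^*$). In particular, $\supp D_n^\infty$ consists of exactly $n$ points lying in $n$ distinct gaps.

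Now, $\supp D_n^\infty \subset K_\bC$ by definition of $K_\bC$. By Lemma~\ref{lem:finiteInterlacing}, the set $K_\bC$ meets only finitely many open gaps of $\E$; call this number $N$. Therefore the poles in $\supp D_n^\infty$ can occupy at most $N$ distinct gaps. If $F_n$ were constant for some $n > N$, we would need $n$ distinct gaps occupied, which contradicts the bound $N$.

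Hence for all $n > N$, $F_n$ is nonconstant, which is exactly the claim. There is no real obstacle here beyond assembling the two earlier statements; the only minor subtlety is confirming that Theorem~\ref{thmConstant} forces the $n$ poles to be distinct points (which is automatic from the requirement that they lie in $n$ distinct gaps, together with $x_*$ lying in yet another gap).
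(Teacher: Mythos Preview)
Your proof is correct and essentially the same as the paper's. The only difference is that you cite Theorem~\ref{thmConstant} (the packaged characterization of constant extremizers), while the paper cites the underlying properties Theorem~\ref{thm:MarkovCorrection}\eqref{it:zerosimple},\eqref{it:1zero} directly; since Theorem~\ref{thmConstant} is itself proved from those two items, the arguments are equivalent, and both conclude via Lemma~\ref{lem:finiteInterlacing}.
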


\begin{proof}
Because $D_n^0\leq 1$ for any gap and there is at most one generalized zero per gap due to Theorem \ref{thm:MarkovCorrection}\eqref{it:zerosimple},\eqref{it:1zero}, the claim follows by Lemma \ref{lem:finiteInterlacing} and $\deg D_n^\infty=n$.
\end{proof}
Since we are interested in asymptotics of $F_n$ as  $n\to\infty$, we assume from now on that $F_n$ is non-constant.
\begin{lemma}\label{lem:InfintePoles}
	Fix an open set $O\subset \overline{\R}\setminus\E$ so that $\mu(O)>0$. Then
	\begin{align}\label{eq:Dec15}
	\lim_{n\to\infty}\sum_{\bc\in O}D_n^\infty(\bc)=+\infty.
	\end{align}
\end{lemma}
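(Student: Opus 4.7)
The plan is to translate the sum into an integral against the probability measure $\mu_n$ and then exploit the weak-$*$ convergence $\mu_n \to \mu$ by testing against a carefully chosen continuous function supported in $O$.

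First, directly from the definition \eqref{7oct2},
\[
\sum_{\bc \in O} D_n^\infty(\bc) = n \mu_n(O),
\]
so it suffices to show that $\mu_n(O)$ stays bounded away from zero for large $n$. Since $\mu$ is a finite Borel measure on the compact metric space $\overline{\bbR}$ and $\mu(O)>0$, by inner regularity there is a compact set $K \subset O$ with $\mu(K) > 0$. By Urysohn's lemma one then picks a continuous $f : \overline{\bbR} \to [0,1]$ with $f \equiv 1$ on $K$ and $\supp f \subset O$, so that
\[
\int f \, d\mu \geq \mu(K) > 0.
\]

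By the assumed weak-$*$ convergence $\mu_n \to \mu$ against $C(\overline{\bbR})$, we have $\int f \, d\mu_n \to \int f \, d\mu$, and hence for all sufficiently large $n$,
\[
\int f \, d\mu_n \geq \tfrac{1}{2}\int f \, d\mu.
\]
Since $0 \leq f \leq \chi_O$, this yields
\[
\sum_{\bc \in O} D_n^\infty(\bc) = n\,\mu_n(O) \geq n \int f \, d\mu_n \geq \tfrac{n}{2}\int f \, d\mu \longrightarrow \infty,
\]
which is \eqref{eq:Dec15}.

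There is no genuine obstacle: the statement is essentially the lower-semicontinuity half of the Portmanteau theorem in disguise. The only subtlety is that weak-$*$ convergence of measures does not directly yield convergence on indicator functions of open sets, which is why the continuous cutoff $f$ is inserted as an intermediate step between $\chi_K$ and $\chi_O$.
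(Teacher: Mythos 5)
Your proof is correct and follows essentially the same route as the paper's, which simply cites the Portmanteau theorem to get $\liminf_n \mu_n(O) \ge \mu(O) > 0$. You just unpack the standard lower-semicontinuity argument (inner regularity plus a Urysohn cutoff) rather than quoting the named theorem.
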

\begin{proof}
By definition,
	\begin{align*}
	\mu_n(O)=\frac{1}{n}\sum_{\bc\in O}D_n^\infty(\bc).
	\end{align*}
By the Portmanteau theorem, $	\liminf_{n\to\infty}\mu_n(O)\geq \mu(O)>0$, so
	\begin{align*}
	\liminf_{n\to\infty} \frac 1n \sum_{\bc\in O}D_n^\infty(\bc) > 0
	\end{align*}
	which implies \eqref{eq:Dec15}.
\end{proof}

The following analog of Koosis's formula for the Martin or Phragm\'en Lindel\"of function \cite[Theorem on page 407]{KoosisLogarithmic1} will be very useful. It was already used in \cite[Proposition 4.3]{ChriSiZiYuDuke}.

\begin{lemma}\label{lem:KoosisAnalog}
Let $\E_1\subset \E_2\subset\overline{\bbR}$ so that $\E_1$ is not polar and let $\bc\in\overline{\bbR}\setminus\E_2$. Then,
\begin{align}\label{eq:Dec9}
G_{\E_1}(z,\bc)-G_{\E_2}(z,\bc)=\int_{\E_2\setminus\E_1}G_{\E_1}(z,x)\omega_{\E_2}(dx,c).
\end{align}
\end{lemma}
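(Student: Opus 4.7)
The plan is to recognize the identity as a Poisson-type representation of a bounded harmonic function on the domain $\Omega_2 := \overline{\bbC} \setminus \E_2$, evaluated at the point $\bc \in \Omega_2$. Using the symmetry $G_{\E_i}(z,y) = G_{\E_i}(y,z)$, I first fix $z$ with $z \notin \E_2$ and consider
\[
w(y) := G_{\E_1}(z,y) - G_{\E_2}(z,y)
\]
as a function of $y$. The two logarithmic singularities at $y = z$ cancel, so $w$ extends harmonically across $y = z$, and away from $y=z$ both summands are harmonic in $y$ on $\Omega_2$; hence $w$ is harmonic on all of $\Omega_2$. A standard argument shows that $w$ is also bounded there: near $\E_2 \setminus \E_1$ both Green functions are continuous, near $\E_1$ both vanish q.e., and near $\infty$ (if $\infty \notin \E_2$) the difference tends to a finite limit since the logarithmic growth at infinity is the same for both Green functions.

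Next I would identify the q.e.\ boundary values of $w$ on $\partial \Omega_2 = \E_2$. Since $G_{\E_2}(z,\cdot)$ vanishes q.e.\ on $\E_2$ and $G_{\E_1}(z,\cdot)$ vanishes q.e.\ on $\E_1 \subset \E_2$, the boundary values of $w$ are $0$ q.e.\ on $\E_1$ and coincide with the continuous function $x \mapsto G_{\E_1}(z,x)$ on $\E_2 \setminus \E_1$, continuity holding because this set is bounded away from $\{z\} \cup \E_1$. Applying the harmonic-measure representation of bounded harmonic functions on $\Omega_2$ at the point $\bc$,
\[
w(\bc) = \int_{\E_2} w(x)\, \omega_{\E_2}(dx,\bc) = \int_{\E_2 \setminus \E_1} G_{\E_1}(z,x)\, \omega_{\E_2}(dx,\bc),
\]
and recalling the definition of $w$ yields \eqref{eq:Dec9}. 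The case $z \in \E_2 \setminus \E_1$ (if needed in the sequel) is recovered by a limiting argument along $z_n \to z$ with $z_n \notin \E_2$, using continuity of $G_{\E_1}(\cdot,x)$ and dominated convergence against $\omega_{\E_2}(dx,\bc)$.

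The main technical obstacle is the Poisson-type step, since $\E_2$ need not be regular for the Dirichlet problem. This is resolved by appealing to Perron--Wiener--Brelot theory: a bounded harmonic function on a Greenian domain is represented by integration of its q.e.\ boundary values against harmonic measure, which assigns no mass to polar sets. Because the boundary function $x \mapsto G_{\E_1}(z,x)$ is already continuous on the relevant set $\E_2 \setminus \E_1$ and vanishes q.e.\ on $\E_1$, the representation applies directly once $w$ is known to be bounded.
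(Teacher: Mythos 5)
Your proof is correct, but it runs dual to the paper's. The paper fixes $\bc$ and treats $z\mapsto G_{\E_1}(z,\bc)-G_{\E_2}(z,\bc)$ as a bounded, nonnegative \emph{superharmonic} function on the \emph{larger} domain $\Omega_1=\overline{\bbC}\setminus\E_1$: it identifies the Riesz measure as $\omega_{\E_2}(\cdot,\bc)\vert_{\Omega_1}$, applies the Riesz decomposition theorem to write the function as $\int_{\Omega_1}G_{\E_1}(\cdot,x)\,\omega_{\E_2}(dx,\bc)+u$, and kills the greatest harmonic minorant $u$ by the maximum principle (bounded, $\ge 0$, vanishing q.e.\ on $\E_1$). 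You instead exploit the symmetry of the Green function, fix $z$, and treat $y\mapsto G_{\E_1}(z,y)-G_{\E_2}(z,y)$ as a bounded \emph{harmonic} function on the \emph{smaller} domain $\Omega_2=\overline{\bbC}\setminus\E_2$ (the logarithmic poles at $y=z$ cancel), then recover its value at $y=\bc$ from the Perron--Wiener--Brelot representation against $\omega_{\E_2}(\cdot,\bc)$, using that the q.e.\ boundary values are $0$ on $\E_1$ and $G_{\E_1}(z,\cdot)$ on $\E_2\setminus\E_1$. Both routes rest on the same two potential-theoretic inputs --- boundedness of Green functions away from their poles, and a maximum/uniqueness principle modulo polar sets --- so neither is strictly more elementary; your version trades the Riesz decomposition theorem for the resolutivity of the Dirichlet problem, and needs the extra (harmless) observations that the poles cancel and that $G$ is symmetric. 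One small caveat you already flag correctly: your argument as stated gives the identity for $z\in\Omega_2$, whereas the paper proves it for $z\in\Omega_1$; in every application in the paper $z$ does lie in $\Omega_2$, so this is not a real restriction, and your proposed limiting argument would handle $z\in\E_2\setminus\E_1$ if it were ever needed.
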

\begin{proof}
Since \eqref{eq:Dec9} is conformally invariant, by applying a conformal map we can assume that $\infty\in\E_1$, i.e., $\Omega_1=\overline{\bbC}\setminus \E_1\subset\bbC$. Define also $\Omega_2=\overline{\bbC}\setminus \E_2$. Since the logarithmic pole at $\bc$ is canceled, $G_{\E_1}(z,\bc)-G_{\E_2}(z,\bc)$ defines a superharmonic function on $\Omega_1$ which is bounded. Moreover, its Riesz measure is given by $\omega_{\E_2}(dx,\bc)|\Omega_1.$ Since $\E_1\subset \E_2$ it follows by the maximum principle that $G_{\E_1}(z,\bc)-G_{\E_2}(z,\bc)\geq 0$. Thus, in particular it has a nonnegative subharmonic minorant in $\Omega_1$ and it follows by the Riesz decomposition theorem that
\begin{align*}
G_{\E_1}(z,\bc)-G_{\E_2}(z,\bc)=\int_{\Omega_1}G_{E_1}(z,x)\omega_{\E_2}(dx,\bc)+u(z),
\end{align*}
where $u$ is the greatest harmonic minorant of $G_{\E_1}(z,\bc)-G_{\E_2}(z,\bc)$. We have already seen that $u\geq 0$. On the other hand, since $\E_1$ is the boundary for $\Omega_1$ and $\Omega_2$ it follows that for q.e. $x\in\E_1$ we have
\begin{align*}
\limsup_{z\to x}  u(z)\leq \limsup_{z\to x} (G_{\E_1}(z,\bc)-G_{\E_2}(z,\bc))=0.
\end{align*}
Thus, $u$ is a bounded harmonic function in $\Omega_1$ which vanishes $q.e.$ on $\E_1$. It follows by the maximum principle \cite[Corollary 8.3]{GarMarHarmonic} that $u=0$ and we obtain \eqref{eq:Dec9}.
\end{proof}

Compared to the standard Chebyshev problem, we encounter a technical difference for residual extremal functions. Let $(\ba_i,\bb_i)$ be a gap so that \eqref{eq:Dec15} is satisfied for $O=(\ba_i,\bb_i)$. We want to estimate $G_{\E_{n}}(z,\bc)$ for $\bc\in(\ba_i,\bb_i)$. But since $(\ba_i,\bb_i)$ is not necessarily the extremal gap, there can be an extension $(u_i,v_i)$ in this gap, which intuitively makes  $G_{\E_{n}}(z,\bc)$ smaller if $[u_i,v_i]$ is close to $\bc$. However, we have already encountered in Theorem \ref{thm:preimage}\eqref{it:EnDegeneration}, that a cancellation of a pole can be regarded as a degenerated internal interval. Thus, we are led to expect that an additional interval can have no more ``effect'' than reducing the number of Green functions in the sum by one. This is the content of the following lemma:
\begin{lemma}\label{lem:removeInterval}
Let $\E_n$ and $u_i,v_i$ be defined as in Theorem \ref{thm:preimage}. Fix a gap $(\ba_i,\bb_i)$ and define $\E_n^i=\E_n\setminus(\ba_i,\bb_i)$. Let $z\in\overline{\C}\setminus \E_n$ and $(F_n)_\infty(z)=0$. Then there is a $t\in[\ba_i,\bb_i]$ such that $G_\E(t,z)=\max_{x\in[\ba_i,\bb_i]}G_\E(x,z)$ and we have
\begin{align}\label{eq:2Oct19}
\sum_{\bc}(F_n)_\infty(\bc)G_{\E_n}(z,\bc)\geq\sum_{\bc}(F_n)_\infty(\bc)G_{\E^i_n}(z,\bc)-G_\E(z,t).
\end{align}
In particular, if $z\notin(\ba_i,\bb_i)$, then
\begin{align}\label{eq:3Oct19}
\lim_{n\to\infty}\sum_{\bc}(F_n)_\infty(\bc)G_{\E^i_n}(z,\bc)=\infty\implies \lim_{n\to\infty}\sum_{\bc}(F_n)_\infty(\bc)G_{\E_n}(z,\bc)=\infty.
\end{align}
\end{lemma}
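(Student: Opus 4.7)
The plan is to combine Koosis's formula (Lemma \ref{lem:KoosisAnalog}) with domain monotonicity of Green functions. Applying Lemma \ref{lem:KoosisAnalog} to the pair $\E_n^i \subset \E_n$ (valid since $\bc$ is a pole of $F_n$ and hence not in $\E_n$) and noting $\E_n \setminus \E_n^i = \E_n \cap (\ba_i, \bb_i)$, one obtains
\begin{align*}
G_{\E_n^i}(z, \bc) - G_{\E_n}(z, \bc) = \int_{\E_n \cap (\ba_i, \bb_i)} G_{\E_n^i}(z, x)\, \omega_{\E_n}(dx, \bc).
\end{align*}
Since $\E \subset \E_n^i$, monotonicity gives $G_{\E_n^i}(z, x) \leq G_\E(z, x)$, and bounding $G_\E(z, x)$ on $[\ba_i, \bb_i]$ by its maximum $G_\E(z, t)$ (which exists by compactness, with the convention $G_\E(z, z) = +\infty$ if $z$ happens to lie in the gap) leads to
\begin{align*}
G_{\E_n^i}(z, \bc) - G_{\E_n}(z, \bc) \leq G_\E(z, t)\, \omega_{\E_n}(\E_n \cap (\ba_i, \bb_i), \bc).
\end{align*}

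Multiplying by $(F_n)_\infty(\bc)$, summing over $\bc$, and rearranging reduces \eqref{eq:2Oct19} to establishing
\begin{align*}
\sum_\bc (F_n)_\infty(\bc)\, \omega_{\E_n}(\E_n \cap (\ba_i, \bb_i), \bc) \leq 1,
\end{align*}
which is the combinatorial heart of the proof. The key observation is that, by the case analysis of Theorem \ref{thm:preimage}, $\E_n \cap (\ba_i, \bb_i)$ is contained in the closure of a single open band $I$ of $\E_n$: if the extension $[u_i, v_i]$ is internal, it is itself a closed band; if it reaches $\ba_i$ or $\bb_i$, the band crosses into $\E$ from the adjacent side; and if $u_i = v_i = \ba_i$, the intersection is empty. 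Monotonicity of $\omega_{\E_n}(\cdot, \bc)$ together with Lemma \ref{lem:HarmonicMeasureEn} applied to $I$ then gives $\sum_\bc (F_n)_\infty(\bc)\, \omega_{\E_n}(I, \bc) = 1$, which is the required bound (the harmonic measure of the two band endpoints is zero).

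For \eqref{eq:3Oct19}, the hypothesis $z \notin (\ba_i, \bb_i)$ together with $z \notin \E$ (which contains $\ba_i$ and $\bb_i$) gives $z \notin [\ba_i, \bb_i]$, so $G_\E(z, t)$ is a finite constant independent of $n$. Passing to the limit in \eqref{eq:2Oct19} then yields the implication. The main technical obstacle is verifying that $\E_n \cap (\ba_i, \bb_i)$ sits inside a single band of $\E_n$; this is the reason for introducing $\E_n^i$ in the first place and rests entirely on the detailed classification of possible gap extensions provided by Theorem \ref{thm:preimage}.
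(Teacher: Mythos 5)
Your proposal follows essentially the same route as the paper: apply Lemma~\ref{lem:KoosisAnalog} to the pair $\E_n^i \subset \E_n$, bound $G_{\E_n^i}(z,x) \le G_\E(z,x) \le G_\E(z,t)$ via domain monotonicity, and then control the resulting harmonic-measure factor using Lemma~\ref{lem:HarmonicMeasureEn} together with the structure of $\E_n$ from Theorem~\ref{thm:preimage}. The only place you go beyond the paper is in spelling out why $\rho_n([u_i,v_i]) := \sum_{\bc}(F_n)_\infty(\bc)\,\omega_{\E_n}([u_i,v_i],\bc) \le 1$, which the paper leaves as a one-line citation; your explanation is correct (the key point is that $[u_i,v_i]$ meets at most one open band of $\E_n$, since two bands would force two generalized zeros of $F_n$ in $[\ba_i,\bb_i]$, contradicting Theorem~\ref{thm:MarkovCorrection}\eqref{it:altPoint}), so this is a welcome clarification rather than a deviation.
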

\begin{proof}
If $z\in(\ba_i,\bb_i)$, then $t=z$ and \eqref{eq:2Oct19} is trivial. Thus, let $z\notin(\ba_i,\bb_i)$.

Since $\E_n$ is a finite union of intervals it is clearly not polar and putting $\E_n\setminus \E_n^i=[u_i,v_i]$, we obtain from Lemma \ref{lem:KoosisAnalog} that
\begin{align}\label{eq:1Oct19}
G_{\E_n^i}(z,\bc)-G_{\E_n}(z,\bc)=\int_{u_i}^{v_i}G_{\E_n^i}(z,x)\omega_{\E_{n}}(\dx,\bc).
\end{align}
By \cite[Theorem 2.1.2]{RansfordPotTheorie} a subharmonic function attains its maximum on compacts and thus $t$ is well defined. Define
\begin{align*}
\rho_n(dx)=\sum_{\bc}(F_n)_\infty(\bc)\omega_{\E_n}(dx,\bc),
\end{align*}
and note that it follows from Theorem \ref{thm:preimage} and Lemma \ref{lem:HarmonicMeasureEn} that $\rho_n([u_i,v_i])\leq 1$. Moreover, by the maximum principle
\begin{align*}
G_{\E_n^i}(z,x)\leq G_{\E}(z,x).
\end{align*}
Thus,
\begin{align*}
\sum_{\bc}(F_n)_\infty(\bc)\int_{u_i}^{v_i}G_{\E_n^i}(z,x)\omega_{\E_{n}}(\dx,\bc)=\int_{u_i}^{v_i}G_{\E_n^i}(z,x)\rho_n(dx)\leq \int_{u_i}^{v_i}G_{\E}(z,x)\rho_n(dx)\leq G_\E(z,t).
\end{align*}
Combining this with \eqref{eq:1Oct19} yields \eqref{eq:2Oct19}.
\end{proof}

By the representation \eqref{eq:extremalFunctionRep}, we have
\begin{align}\label{eq:logFn}
h_n(z)=-\frac{1}{n}\log|B_n(z)|-\frac{1}{n}\log 2+\frac{1}{n}\log|1+B_n(z)^2|.
\end{align}
The next lemma shows that the the asymptotics of $h_n$ for $n\to\infty$ are determined by the term $-\frac{1}{n}\log|B_n(z)|$. In fact, we even prove a stronger statement, which will be needed in Section \ref{sec:SzegoWidom}.
\begin{lemma}\label{lem:zeroLimit}
	Uniformly on compact subsets of $\bbC\setminus\bbR$ we have
	\begin{align}\label{eq:limitZero}
			\lim\limits_{n\to\infty}\log\left|1+B_n(z)^2\right|=0.
	\end{align}
	If we pass to a subsequence such that $\lim_{\ell\to\infty} x_{n_\ell}^*=x_\infty^*$ and $(\ba,\bb)$ denotes the gap containing $x_\infty^*$, then also for $z\in(\ba,\bb)$
		\begin{align}\label{eq:limitZero2}
		\lim\limits_{\ell\to\infty}\log\left|1+B_{n_\ell}(z)^2\right|=0.
		\end{align}
\end{lemma}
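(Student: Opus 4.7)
The plan is to show $|B_n(z)|\to 0$ uniformly on compact subsets of $\C\setminus\R$ (and pointwise for $z\in(\ba,\bb)$ along the subsequence), which forces $\log|1+B_n^2|\to 0$ via the sandwich $1-|B_n|^2\le |1+B_n^2|\le 1+|B_n|^2$. Since
\[
-\log|B_n(z)|=\sum_\bc(F_n)_\infty(\bc)\,G_{\E_n}(z,\bc),
\]
the task reduces to proving this sum diverges with the appropriate uniformity.

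First I would choose a gap $(\ba_j,\bb_j)$ with $\mu((\ba_j,\bb_j))>0$; existence follows because $\mu$ is a probability measure supported in the finitely many gaps meeting $K_\bC$ (Lemma~\ref{lem:finiteInterlacing}). Pick a compact $O_0\subset(\ba_j,\bb_j)$ with $\mu(O_0)>0$. Lemma~\ref{lem:InfintePoles} combined with the reduction bound from Theorem~\ref{thm:MarkovCorrection}\eqref{it:zerosimple},\eqref{it:1zero} yields $\sum_{\bc\in O_0}(F_n)_\infty(\bc)\ge n\mu(O_0)+O(1)\to\infty$. What remains is a positive uniform lower bound on $G_{\E_n}(z,\bc)$ for $\bc\in O_0$.

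For the first part, fix $K\Subset\C\setminus\R$ and apply Lemma~\ref{lem:removeInterval} to replace $\E_n$ by $\E_n^j:=\E_n\setminus(\ba_j,\bb_j)$ at the cost of a single $G_\E(z,t_j)$, which is uniformly bounded on $K\times[\ba_j,\bb_j]$. Since $\E_n^j\subset\overline{\R}\setminus(\ba_j,\bb_j)$, monotonicity of the Green function gives $G_{\E_n^j}(z,\bc)\ge G_{\overline{\R}\setminus(\ba_j,\bb_j)}(z,\bc)$, and the latter is jointly continuous and strictly positive on the compact $K\times O_0$, hence bounded below by some $c>0$. Combining,
\[
\sum_\bc(F_n)_\infty(\bc)G_{\E_n}(z,\bc)\ge c\sum_{\bc\in O_0}(F_n)_\infty(\bc)-\sup_{K\times[\ba_j,\bb_j]}G_\E\to\infty
\]
uniformly on $K$.

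For the subsequential statement on $(\ba,\bb)$, Theorem~\ref{thm:preimage}\eqref{it:EnExtremalGap} gives $\E_{n_\ell}\cap(\ba,\bb)=\emptyset$ for large $\ell$. If one can arrange $(\ba_j,\bb_j)\ne(\ba,\bb)$, the argument above adapts: the lower-bound set becomes $\overline{\R}\setminus[(\ba,\bb)\cup(\ba_j,\bb_j)]$ and $G_\E(z,t_j)$ remains uniformly bounded since $z$ and $t_j$ lie in different gaps. The main obstacle is the edge case $\supp\mu\subset(\ba,\bb)$: here Lemma~\ref{lem:removeInterval} is useless for $(\ba_j,\bb_j)=(\ba,\bb)$, but the direct monotonicity $G_{\E_{n_\ell}}(z,\bc)\ge G_{\overline{\R}\setminus(\ba,\bb)}(z,\bc)$ is still available and strictly positive for $z,\bc\in(\ba,\bb)$ with $z\ne\bc$. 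For fixed $z\in(\ba,\bb)$ one chooses $O_0\subset(\ba,\bb)\setminus\{z\}$ with $\mu(O_0)>0$ when possible; when every positive-mass $O_0$ must approach $z$, the divergence of $G_{\overline{\R}\setminus(\ba,\bb)}(z,\bc)$ as $\bc\to z$ amplifies rather than obstructs the argument.
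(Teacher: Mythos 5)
Your plan coincides with the paper's: reduce to showing $|B_n(z)|\to 0$, i.e.\ that $\sum_\bc (F_n)_\infty(\bc)G_{\E_n}(z,\bc)\to\infty$, by locating a gap carrying positive $\mu$-mass, using Lemma~\ref{lem:InfintePoles} to get unboundedly many poles there, Lemma~\ref{lem:removeInterval} to discard the possible internal extension in that gap, and Green-function monotonicity. The small structural variation is that for $z\in\bbC\setminus\bbR$ you get uniformity on compacts directly from a uniform lower bound $G_{\overline{\bbR}\setminus(\ba_j,\bb_j)}\geq c>0$ on $K\times\overline O$, whereas the paper first invokes the Harnack/normality precompactness of $\log|1+B_n^2|$ and then proves pointwise subsequential convergence using the larger reference set $\E^i=\overline{\bbR}\setminus((\ba,\bb)\cup(\ba_i,\bb_i))$. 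Your route avoids passing to a subsequence for the first part, which is a mild simplification, but the ingredients are identical.

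Two small repairs are needed in your write-up. First, Lemma~\ref{lem:InfintePoles} is stated for \emph{open} sets (it rests on the Portmanteau $\liminf$ inequality); choosing a \emph{compact} $O_0$ and applying the lemma to it directly is not licensed. Take instead an open $O$ with $\overline O\subset(\ba_j,\bb_j)$ compact and $\mu(O)>0$ (the paper's $(\ba_i+\e,\bb_i-\e)$), apply Lemma~\ref{lem:InfintePoles} to $O$, and run the Green-function lower bound over $\overline O$. Relatedly, the inequality you write as $\sum_{\bc\in O_0}(F_n)_\infty(\bc)\ge n\mu(O_0)+O(1)$ is slightly overstated; what the Portmanteau bound plus Theorem~\ref{thm:MarkovCorrection}\eqref{it:1zero} actually gives is $\liminf_n\frac1n\sum_{\bc\in O}(F_n)_\infty(\bc)\ge\mu(O)>0$, which is all you need. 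Second, your discussion of the ``edge case'' for $z\in(\ba,\bb)$ overcomplicates matters: $G_{\overline{\bbR}\setminus(\ba,\bb)}(z,\cdot)$ is lower semicontinuous and strictly positive on $(\ba,\bb)$ (with value $+\infty$ at $\bc=z$), so $\min_{\bc\in\overline O}G_{\overline{\bbR}\setminus(\ba,\bb)}(z,\bc)>0$ regardless of whether $z\in\overline O$; there is no obstruction to worry about, and the paper handles the case $(\ba_i,\bb_i)=(\ba,\bb)$ with exactly this observation.
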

\begin{proof}
Consider $B_n$ as an analytic single-valued function on $\bbC_+$ or $\C_-$ and note that $0<|B_n(z)|< 1$. Thus, $\log\left|1+B_n(z)^2\right|=\Re\log(1+B_n(z)^2)$ defines a family of harmonic functions which is uniformly bounded from above. Thus, by the Harnack principle, the family is precompact in the space of harmonic functions together with the function which is identically $-\infty$. Therefore, it suffices to show that pointwise for fixed $z$ every subsequence has a subsequence so that \eqref{eq:limitZero} holds.
 Let us pass to a subsequence so that $\lim_{\ell \to \infty} x_{n_\ell}^*=x_\infty^*$ and let $(\ba,\bb)$ denote the gap containing $x_\infty^*$. If necessary, we pass to a further subsequence so that $x_{n_\ell}\in(\ba,\bb)$ for all $\ell>0$.  Since for  $|z|<1$
 \begin{align*}
 |\log(|1+z|)|=|\Re(\log(1+z))|\leq |\log(1+z)|=\left|\int_0^1\frac{z}{1+zt}\mathrm dt \right|\leq \frac{|z|}{1-|z|},
 \end{align*}
 it suffices to show that
 \begin{align*}
 \lim\limits_{\ell\to\infty}|B_{n_\ell}(z)|=0.
 \end{align*}
By \eqref{eq:complexGreen} and \eqref{eq:ComplexGreenGeneral} this is equivalent to
\begin{align}\label{eq:1September3}
\lim\limits_{\ell\to\infty}\sum_{\bc}(F_{n_\ell})_\infty(\bc)G_{\E_{n_\ell}}(z,\bc)=+\infty.
\end{align}

Since $\E\cap K_\bC=\emptyset$, we find a gap $(\ba_i,\bb_i)$ and $\e>0$ so that $\mu((\ba_i+\e,\bb_i-\e))>0$. Thus, by Lemma \ref{lem:InfintePoles} we have
\begin{align}\label{eq:Nov17}
\lim_{\ell\to\infty}\sum_{\bc\in (\ba_i+\e,\bb_i-\e)}D_{n_\ell}^\infty(\bc)=+\infty.
\end{align}
Note that it could be that $(\ba_i,\bb_i)=(\ba,\bb)$, which causes no problems in the following.

Set $\E^{i}=\overline{\bbR}\setminus((\ba,\bb)\cup(\ba_i,\bb_i))$ and $\E_{n_\ell}^i=\E_{n_\ell}\setminus(\ba_i,\bb_i)$. By Theorem \ref{thm:preimage}\eqref{it:EnExtremalGap}, $\E_{n_\ell}^i\subset \E^i$, so the maximum principle yields
\begin{align}\label{eq:5Oct19}
G_{\E^i}(z,\bc)\leq G_{\E_{n_\ell}^i}(z,\bc).
\end{align}
Fix $z\in\bbC_+\cup\C_-\cup(\ba,\bb)$ and note that lower semicontinuity implies
\begin{align*}
0<\delta=\min_{\bc\in[\ba_i+\e,\bb_i-\e]} G_{\E^i}(z,\bc).
\end{align*}
Then, by Theorem \ref{thm:MarkovCorrection}\eqref{it:1zero}
\begin{align*}
\sum_{\bc\in(\ba_i+\e,\bb_i-\e)}(F_{n_\ell})_\infty(\bc)G_{\E^i}(z,\bc)\geq \delta  \bigg(-1+\sum_{\bc\in (\ba_i+\e,\bb_i-\e)}D_{n_\ell}^\infty(\bc)\bigg).
\end{align*}
Thus, by \eqref{eq:Nov17} we obtain
\begin{align}\label{eq:4Oct19}
\lim\limits_{\ell\to\infty}\sum_{\bc\in(\ba_i+\e,\bb_i-\e)}(F_{n_\ell})_\infty(\bc)G_{\E^i}(z,\bc)=\infty.
\end{align}
Since by positivity of the Green function
\begin{align*}
\sum_{\bc}(F_{n_\ell})_\infty(\bc)G_{\E^i}(z,\bc)\geq\sum_{\bc\in(\ba_i+\e,\bb_i-\e)}(F_{n_\ell})_\infty(\bc)G_{\E^i}(z,\bc)
\end{align*}
we obtain together with \eqref{eq:5Oct19}  that
\begin{align*}
\lim_{\ell\to\infty}\sum_{\bc}(F_{n_\ell})_\infty(\bc)G_{\E^i_{n_\ell}}(z,\bc)=\infty.
\end{align*}
By an application of Lemma \ref{lem:removeInterval} we obtain \eqref{eq:1September3} which concludes the proof.
\end{proof}

\begin{lemma}\label{lem:positive}
	For $z\in\bbC\setminus\bbR$, we have
	\begin{align}\label{eq:lowerEstimate}
	\liminf_{n\to\infty}\frac{1}{n}\log|F_n(z)|\geq 0.
	\end{align}
\end{lemma}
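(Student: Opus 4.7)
The key tool is the representation from Theorem~\ref{thm:CombRep}, which gives
\[
h_n(z) = \frac{1}{n}\log|F_n(z)| = -\frac{1}{n}\log|B_n(z)| - \frac{\log 2}{n} + \frac{1}{n}\log\lvert 1 + B_n(z)^2\rvert.
\]
My plan is to show each of the three terms has a nonnegative limit inferior, and the conclusion \eqref{eq:lowerEstimate} follows immediately.

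First, by \eqref{eq:complexGreen} and \eqref{eq:ComplexGreenGeneral}, we have
\[
-\log\lvert B_n(z)\rvert = \sum_{\bc}(F_n)_\infty(\bc)\,G_{\E_n}(z,\bc),
\]
and since each $G_{\E_n}(z,\bc) \ge 0$ (Green functions are nonnegative on $\Omega$) and each multiplicity $(F_n)_\infty(\bc) \ge 0$, the first term is nonnegative for every $n$. The middle term $-\frac{\log 2}{n}$ trivially tends to $0$.

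For the third term, I invoke Lemma~\ref{lem:zeroLimit}, which states $\log\lvert 1 + B_n(z)^2 \rvert \to 0$ uniformly on compact subsets of $\bbC\setminus\bbR$; hence $\frac{1}{n}\log\lvert 1 + B_n(z)^2\rvert \to 0$ for $z\in\bbC\setminus\bbR$ (indeed, much faster than necessary). Combining these three observations,
\[
\liminf_{n\to\infty} h_n(z) \ge 0 + 0 + 0 = 0,
\]
which is the desired bound.

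The main (and only nonroutine) obstacle is justifying the vanishing of the $\log\lvert 1 + B_n^2\rvert$ term, but this is already handled by the preceding Lemma~\ref{lem:zeroLimit}; everything else is a direct consequence of the nonnegativity of Green functions and the explicit Joukowsky-style representation \eqref{eq:extremalFunctionRep}.
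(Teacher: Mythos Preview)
Your proof is correct and follows essentially the same approach as the paper: both use the decomposition \eqref{eq:logFn}, note that $|B_n(z)|\le 1$ (equivalently, that the first term is nonnegative by positivity of Green functions), and invoke Lemma~\ref{lem:zeroLimit} to dispose of the $\log|1+B_n^2|$ term. The paper's version is simply terser.
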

\begin{proof}
Fix $z\in\bbC\setminus\bbR$ and recall \eqref{eq:logFn}.
Noting that $|B_n(z)|\leq 1$, the claim follows from Lemma \ref{lem:zeroLimit}.
\end{proof}

In contrast to the classical polynomial setting, our limits will be described by the difference of two potentials, one corresponding to the zeros of $F_n$, leading to a subharmonic part and one corresponding to the poles leading to a superharmonic part. Since in the following considerations we will work with the Riesz measures for both of them, there is no natural choice of a ``coordinate system'' and it will be convenient to apply conformal maps to logarithmic potentials. For a probabiltiy measure $\nu$ with $\supp \nu\subsetneq\overline\R$ and $z_*\in\overline{\R}\setminus\supp\nu$, let us introduce the notation

\begin{align*}
	\Phi_{\nu}(z,z_*)=\int K(x,z;z_*)d\nu(x),
\end{align*}
where
\begin{align*}
K(x,z;z_*)=\begin{cases}
\log\left|	1-\frac{z-z_*}{x-z_*}\right|,\quad &z_*\neq \infty,\\
\log|z-x|,&z_*=\infty.
\end{cases}
\end{align*}
It is straightforward to see that if $z_1,z_2\in\overline\R\setminus\supp\nu$, then there is $\beta\in\bbR$ so that
\begin{align*}
 \Phi_{\nu}(z,z_1)=\beta+\Phi_{\nu}(z,z_2).
\end{align*}

\begin{lemma}\label{lem:PotentialConformal}
Let $\nu$ be a probability measure on $\overline\bbR$, $\supp \nu \subset \E$ and $f\in\PSL(2,\R)$. If $f(\infty)= \infty$, then
\begin{align*}
\Phi_{\nu}(z,z_*)=\Phi_{f_*\nu}(f(z),f(z_*)).
\end{align*}
Otherwise,
\begin{align*}
\Phi_{\nu}(z,z_*)=\Phi_{f_*\nu}(f(z),f(z_*))-\Phi_{f_*\delta_\infty}(f(z),f(z_*)).
\end{align*}
\end{lemma}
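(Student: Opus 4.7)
The proof is a direct calculation based on the transformation law for $\log|z-w|$ under Möbius maps. I would first note that $K(x,z;z_*) = \log|x-z| - \log|x-z_*|$ for $z_*\ne\infty$, so it can be recognized, up to sign, as the log of the cross-ratio $[z,z_*;x,\infty]$. Since M\"obius transformations preserve cross-ratios, we have
\[
K(x,z;z_*) = \log\bigl|[z,z_*;x,\infty]\bigr| = \log\bigl|[f(z),f(z_*);f(x),f(\infty)]\bigr|.
\]
The proof then reduces to expanding the right-hand side in closed form depending on whether $f(\infty)=\infty$.

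\textbf{The two cases.} Write $f=(\a z+\b)/(\g z+\d)$ with $\a\d-\b\g=1$. From the identity $f(z)-f(w) = (z-w)/((\g z+\d)(\g w+\d))$, a direct substitution gives
\[
K(x,z;z_*) \;=\; K(f(x),f(z);f(z_*)) + \log|\g z+\d| - \log|\g z_*+\d|,
\]
where the $\log|\g x+\d|$ contributions cancel. If $f(\infty)=\infty$ then $\g=0$ and the correction vanishes, so integrating against $\nu$ (and changing variables $y=f(x)$ in the push-forward) gives the first formula. If $f(\infty)=\a/\g \ne \infty$, I would compute
\[
\Phi_{f_*\d_\infty}(f(z),f(z_*)) = K\bigl(f(\infty),f(z);f(z_*)\bigr) = -\log|\g z+\d| + \log|\g z_*+\d|,
\]
using $f(\infty)-f(z) = -1/(\g(\g z+\d))$. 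Comparing shows that the correction term is exactly $-\Phi_{f_*\d_\infty}(f(z),f(z_*))$; integrating and pushing the measure forward gives the second formula.

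\textbf{Main obstacle.} The computation itself is routine once the cross-ratio viewpoint is adopted; the only real care is in the bookkeeping of cases where $z_*$, $f(z_*)$, or $f(\infty)$ lies at $\infty$. The case $f(z_*)=\infty$ falls outside the stated hypothesis since it would make $\Phi_{f_*\d_\infty}(f(z),f(z_*))$ ill-defined, and the case $z_*=\infty$ (with its different kernel $K(x,z;\infty)=\log|z-x|$) should be addressed by the parallel identity $\log|z-x| = \log|f(z)-f(x)| + \log|\g z+\d| + \log|\g x+\d|$; after integrating the extra $\log|\g x+\d|$ term appears as a finite $z$-independent constant that matches the remark preceding the lemma about the additive ambiguity $\b$ in $\Phi_\nu$.
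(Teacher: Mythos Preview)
Your proof is correct and follows essentially the same route as the paper's: both rest on the invariance of the cross-ratio $\frac{x-z}{x-z_*}$ under M\"obius maps. The paper manipulates this identity directly, writing
\[
\frac{x-z}{x-z_*}=\left(1-\frac{f(z)-f(z_*)}{f(\infty)-f(z_*)}\right)^{-1}\left(1-\frac{f(z)-f(z_*)}{f(x)-f(z_*)}\right)
\]
and then reading off the two $\Phi$-terms, while you obtain the same correction by expanding via the explicit formula $f(z)-f(w)=(z-w)/((\gamma z+\delta)(\gamma w+\delta))$; the substance is identical. (Minor slip: $f(\infty)-f(z)=+1/(\gamma(\gamma z+\delta))$, not $-1$, but this is irrelevant after taking absolute values.)
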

\begin{proof}
	Let us first assume that $f(\infty)=\infty$, i.e., $f(z)=az+b$ with $a\neq 0$. Then we have
	\begin{align*}
	1-\frac{z-z_*}{x-z_*}=1-\frac{f(z)-f(z_*)}{f(x)-f(z_*)}.
	\end{align*}
  Thus, the claim follows by the transformation rule for pushforward measures.

Let now $f(\infty)\neq \infty$. Since $f$ preserves cross-ratios, we get
\begin{align*}
	1-\frac{z-z_*}{x-z_*}&=\frac{x-z}{x-z_*}=
	\frac{f(x)-f(z)}{f(x)-f(z_*)}\frac{f(z_*)-f(\infty)}{f(z)-f(\infty)}\\
	&=\left(1-\frac{f(z)-f(z_*)}{f(\infty)-f(z_*)}\right)^{-1}\left(1-\frac{f(z)-f(z_*)}{f(x)-f(z_*)}\right).
\end{align*}
 Noting that $f_*\delta_{\infty}=\delta_{f(\infty)}$, again the claim follows by applying the transformation rule for pushforward measures.
\end{proof}

\begin{lemma}\label{lem:CompactnessMeasure}
	The measures $\nu_n$ are a precompact family with respect to weak convergence on $C(\overline{\bbR})$. Any accumulation point $\nu = \lim_{\ell\to\infty} \nu_{n_\ell}$ is a probability measure and $\supp\nu \subset \E$.
\end{lemma}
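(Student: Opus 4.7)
The plan has two main steps. First, each $\nu_n$ is a probability measure on the compact space $\overline{\bbR}$: the total mass equals $\tfrac{1}{n}\deg D_n^0 = 1$ by \eqref{eq:degDivZero}, and Theorem~\ref{thm:MarkovCorrection}\eqref{it:zeroreal} together with the standing assumption $\bc_k\in \overline{\bbR}\setminus\E$ ensures $\supp D_n^0\subset \overline{\bbR}$ (actual zeros of $F_n$ are real by \eqref{it:zeroreal}, while reductions of order occur only at the poles). Since the set of probability measures on the compact metric space $\overline{\bbR}$ is weak-$*$ sequentially compact in $C(\overline{\bbR})^*$ by Banach--Alaoglu, the sequence $(\nu_n)$ is precompact and every weak-$*$ accumulation point is automatically a probability measure.

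For the support statement, let $\nu_{n_\ell}\to \nu$ weakly and fix any gap $(\ba_i,\bb_i)$ of $\E$. By Theorem~\ref{thm:MarkovCorrection}\eqref{it:zerosimple} and \eqref{it:1zero}, $F_{n_\ell}$ has at most one generalized zero in $(\ba_i,\bb_i)$ and this generalized zero is simple, so
\begin{align*}
\nu_{n_\ell}((\ba_i,\bb_i)) \le \frac{1}{n_\ell}.
\end{align*}
Since $(\ba_i,\bb_i)$ is open, the Portmanteau inequality $\nu(U)\le \liminf_\ell \nu_{n_\ell}(U)$ gives $\nu((\ba_i,\bb_i))=0$. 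Summing over the at-most-countable collection of gaps yields $\nu(\overline{\bbR}\setminus \E)=0$, and since $\overline{\bbR}\setminus \E$ is open this forces $\supp \nu\subset \E$.

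Conceptually there is no real obstacle: the argument is essentially bookkeeping, combining weak-$*$ compactness on the compact space $\overline{\bbR}$ with the per-gap bound on generalized zeros from the Markov-correction analysis of Section~\ref{sectionPropertiesFixedn}. The only point that deserves care is the direction of the Portmanteau inequality (open sets, rather than closed): this is precisely what converts a uniform $O(1/n)$ bound on $\nu_n$ on each individual gap into vanishing $\nu$-mass on the full open set $\overline{\bbR}\setminus\E$.
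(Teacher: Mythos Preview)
Your proof is correct and follows essentially the same approach as the paper: Banach--Alaoglu for precompactness on the compact space $\overline{\bbR}$, then the per-gap bound $\nu_{n_\ell}((\ba_i,\bb_i))\le 1/n_\ell$ combined with the Portmanteau inequality for open sets to conclude $\nu((\ba_i,\bb_i))=0$. If anything, your version is slightly more careful: you invoke \eqref{it:zerosimple} and \eqref{it:1zero} (the paper's cross-reference to \eqref{it:noZeroGap} appears to be a typo, since that item concerns only the extremal gap), and you make explicit the countable union step, whereas the paper instead uses M\"obius invariance to reduce to a bounded gap.
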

\begin{proof}
	Since $\deg D_n^0=n$, precompactness follows by the Banach-Alaoglu theorem and any accumulation point is a probability measure on $\overline{\bbR}$.
	Let $(\ba,\bb)$ be a connected component of $\overline{\bbR} \setminus \E$. Let us prove that $\nu((\ba,\bb)) = 0$. By M\"obius invariance, it suffices to assume that $(\ba,\bb)$ is a bounded subset of $\bbR$. Due to Theorem~\ref{thm:MarkovCorrection}~\eqref{it:noZeroGap}, there is at most one generalized zero in $(\ba,\bb)$, thus $\nu_{n_\ell}((\ba,\bb))\leq \frac{1}{n_\ell}$ and by the Portmanteau theorem  $\nu((\ba,\bb))=0$ and
	$\supp \nu \subset \E$.
\end{proof}

In the following we will need statements also for a subsequence $(h_{n_\ell})_{\ell=1}^\infty$. Therefore, for a fixed subsequence let us define
\begin{align}\label{eq:K'}
K'=\overline{\bigcup_{\ell\geq 1}\supp D_{n_\ell}^\infty},\quad\text{and}\quad \Omega_{K'}=\overline{\bbC}\setminus K',
\end{align}
so that $h_{n_\ell}$ is subharmonic on $\Omega_{K'}$ for all $\ell$. Since $\lim_{n\to\infty}\mu_n=\mu$, we have for any subsequence (and therefore any $K'$), that $\supp\mu\subset K'\subset K_\bC$ and therefore $\Omega_\bC\subset\Omega_{K'}\subset \overline{\bbC}\setminus\supp\mu$.

If $D_n^0(z_*^1)=D_n^\infty(z_*^2)=0$, then by factoring $F_n$ we see that there is $\beta_n\in \R$ so that
\begin{align}\label{eq:potentialRephn}
h_n(z)=\beta_n+\Phi_{\nu_n}(z,z_*^1)-\Phi_{\mu_n}(z,z_*^2).
\end{align}

\begin{theorem}\label{thm:June9}
Let us pass to a subsequence so that $\lim_{\ell}\nu_{n_\ell}=\nu$, $\lim_\ell x_{n_\ell}^*=x_\infty$ and $\lim_{\ell}\beta_{n_\ell}=\beta\in\bbR\cup\{-\infty,+\infty\}$. Then, in fact $\beta\in\bbR$ and for $z_*\notin K_\bC$ we have uniformly on compact subsets of $\bbC\setminus\R$
\begin{align}
\lim\limits_{\ell\to\infty}h_{n_\ell}(z)=\beta+\Phi_\nu(z,x_\infty)-\Phi_\mu(z,z_*)=:h(z).
\end{align}
In particular, $h$ extends to a positive superharmonic function on $\overline\bbC\setminus\E$ and to a subharmonic function on $\overline{\bbC}\setminus\supp\mu$.
Moreover, for q.e. every $z\in\Omega_{K'}$
\begin{align*}
\limsup\limits_{\ell\to\infty}h_{n_\ell}(z)=\beta+\Phi_\nu(z,x_\infty^*)-\Phi_\mu(z,z_*).
\end{align*}
\end{theorem}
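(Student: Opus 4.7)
The plan is to pass to the limit in the representation \eqref{eq:potentialRephn} with the choice $z_*^1 = x_{n_\ell}^*$ and $z_*^2 = z_*$; both are admissible since $D_{n_\ell}^0(x_{n_\ell}^*) = 0$ (the extremal point is either a maximal-order pole of $F_{n_\ell}$, or a point where $F_{n_\ell}$ is nonzero by Theorem~\ref{thmConstant} and the residual extremal property), and $D_{n_\ell}^\infty(z_*) = 0$ because $z_* \notin K_\bC$. Thus
\[
h_{n_\ell}(z) = \beta_{n_\ell} + \Phi_{\nu_{n_\ell}}(z, x_{n_\ell}^*) - \Phi_{\mu_{n_\ell}}(z, z_*).
\]
Because $(x_n^*)$ does not accumulate on $\E$ and $\supp \nu \subset \E$ by Lemma~\ref{lem:CompactnessMeasure}, the point $x_\infty$ is bounded away from $\supp \nu$; likewise $z_*$ is bounded away from $\supp \mu$ and all $\supp \mu_n$ uniformly. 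Both kernels $K(\cdot,z;x_{n_\ell}^*)$ and $K(\cdot,z;z_*)$ are then continuous in a neighborhood of the relevant supports, uniformly for $z$ in a compact subset of $\bbC \setminus \bbR$, and the weak convergences $\mu_n \to \mu$ and $\nu_{n_\ell} \to \nu$ upgrade to uniform convergence of the corresponding potentials on every such compact.

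Next, to show $\beta \in \bbR$, I would fix any $z_0 \in \bbC \setminus \bbR$ and combine Lemma~\ref{lem:uniformBound}, which yields $\limsup_\ell h_{n_\ell}(z_0) \leq \int G_\E(z_0,x) d\mu(x) < \infty$, with Lemma~\ref{lem:positive}, which gives $\liminf_\ell h_{n_\ell}(z_0) \geq 0$. Since both potentials converge to finite values at $z_0$, the sequence $\beta_{n_\ell}$ is bounded; hence $\beta \in \bbR$, and the limit formula $h_{n_\ell}(z) \to h(z) := \beta + \Phi_\nu(z, x_\infty) - \Phi_\mu(z, z_*)$ holds uniformly on compact subsets of $\bbC \setminus \bbR$.

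For the extension properties, the distributional Laplacian of $h$ is the signed measure $\nu - \mu$. Since $\supp \nu \subset \E$ and $\supp \mu \cap \E = \emptyset$, on $\overline\bbC \setminus \E$ the Riesz measure reduces to $-\mu$, giving superharmonicity, while on $\overline\bbC \setminus \supp \mu$ it equals $\nu$, giving subharmonicity. For positivity on $\overline\bbC \setminus \E$, the uniform convergence on $\bbC \setminus \bbR$ together with Lemma~\ref{lem:positive} forces $h \geq 0$ on $\bbC \setminus \bbR$. Because $\supp \mu \subset \overline\bbR$ has zero planar Lebesgue measure, $h \geq 0$ almost everywhere in $\overline\bbC \setminus \E$, and the superharmonic mean value inequality $h(\zeta) \geq \frac{1}{\pi r^2}\int_{B(\zeta,r)} h\, dA$ applied at each $\zeta$ (with $r$ small enough that $B(\zeta,r) \subset \overline\bbC \setminus \E$) delivers $h(\zeta) \geq 0$ everywhere.

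For the quasi-everywhere statement on $\Omega_{K'}$, I would invoke the upper envelope theorem for weakly convergent logarithmic potentials. Decomposing
\[
\Phi_{\nu_{n_\ell}}(z, x_{n_\ell}^*) = \int \log|x-z|\, d\nu_{n_\ell}(x) - \int \log|x-x_{n_\ell}^*|\, d\nu_{n_\ell}(x),
\]
the second integral converges to $\int \log|x-x_\infty|\, d\nu(x)$ by weak convergence and continuity of $\log|x-x_\infty|$ on a neighborhood of $\supp \nu$, while the envelope theorem gives $\limsup_\ell \int \log|x-z|\, d\nu_{n_\ell}(x) = \int \log|x-z|\, d\nu(x)$ q.e.\ in $z$. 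Combined with the pointwise limit $\Phi_{\mu_{n_\ell}}(z, z_*) \to \Phi_\mu(z, z_*)$ on $\Omega_{K'}$, guaranteed by continuity of the kernel on $K'$, we arrive at $\limsup_\ell h_{n_\ell}(z) = h(z)$ q.e.\ on $\Omega_{K'}$. The principal obstacle I anticipate is the positivity claim on the full set $\overline\bbC \setminus \E$: pointwise limits and lower semicontinuity of $h$ fail to give a lower bound on the gap portion that meets $\supp \mu$, and it is the super-mean-value argument above that resolves this subtlety.
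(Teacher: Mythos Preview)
Your argument follows essentially the same route as the paper's: represent $h_{n_\ell}$ via \eqref{eq:potentialRephn}, pass to the limit in each potential using weak convergence and continuity of the kernels, pin down $\beta\in\bbR$ via the upper and lower bounds of Lemmas~\ref{lem:uniformBound} and~\ref{lem:positive}, and invoke the upper envelope theorem for the q.e.\ statement. Two small points are worth noting. First, the paper uses the \emph{fixed} base point $x_\infty^*$ for $\Phi_{\nu_{n_\ell}}$ rather than the moving $x_{n_\ell}^*$; this is legitimate because by Theorem~\ref{thm:MarkovCorrection}\eqref{it:noZeroGap} the entire gap $(\ba,\bb)$ is free of generalized zeros, and it avoids having to control a varying kernel. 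Second, your decomposition $\Phi_{\nu_{n_\ell}}(z,x_{n_\ell}^*)=\int\log|x-z|\,d\nu_{n_\ell}-\int\log|x-x_{n_\ell}^*|\,d\nu_{n_\ell}$ and your claims about extensions on $\overline\bbC$ tacitly assume $\infty\notin\supp\nu_{n_\ell}$ and $\infty\notin\supp\mu$, which can fail (a generalized zero or a pole can sit at $\infty$). The paper handles this with a one-line $\PSL(2,\bbR)$ reduction placing $\infty$ inside $(\ba,\bb)$, so that $\E$ and $K'$ become compact in $\bbR$ and the standard upper envelope theorem and the $\Phi_\rho(z)=\log|z|+O(1)$ asymptotics apply; you should insert the same reduction.
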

\begin{proof}
Let $(\ba,\bb)$ denote the gap containing $x_\infty$ and let us assume that $\ell$ is big enough so that all $x_{n_\ell}^*$ are in $(\ba,\bb)$. Due to Theorem \ref{thm:MarkovCorrection}\eqref{it:noZeroGap}, $\nu_{n_\ell}((\ba,\bb))=0$. Thus, we can write
\begin{align}\label{eq:upperEnvTheorem}
h_{n_\ell}(z)=\beta_{n_\ell}+\Phi_{\nu_{n_\ell}}(z,x_\infty^*)-\Phi_{\mu_{n_\ell}}(z,z_*).
\end{align}
Since
$
K(\cdot,z,x_\infty)
$
is continuous on $\supp\nu_{n_\ell}\subset\overline{\bbR}\setminus(\ba,\bb)$ and $
K(\cdot,z,z_*)$ is continuous on $\overline{\bbR}\setminus K'$, we get
\begin{align*}
\lim_{\ell\to\infty}\Phi_{\nu_{n_\ell}}(z,x_\infty^*)=\Phi_{\nu}(z,x_\infty^*),\quad \lim_{\ell\to\infty}\Phi_{\mu_{n_\ell}}(z,x_\infty^*)=\Phi_{\mu}(z,x_\infty^*).
\end{align*}
Since, for $z_0\in \bbC_+$,  $\Phi_{\nu}(z_0,x_\infty^*), \Phi_{\mu}(z_0,z_*)\in\bbR$ the upper and lower estimates \eqref{eq:upperEstimate} and \eqref{eq:lowerEstimate} imply that $\beta\in\bbR$. In fact, convergence is uniform on compact subsets of $\bbC\setminus\bbR$: since $\supp(\nu_{n_{\ell}}),\supp(\mu_{n_{\ell}})\subset \overline{\bbR}$ for all $\ell$ and all measures are normalized,  the estimate
\[
\log \left\lvert \frac{x-z_1}{x-z_2} \right\rvert \le \log \left( 1 + \frac{\lvert z_1 - z_2 \rvert}{\dist(z_2,\overline{\R})} \right) \le \frac{\lvert z_1 - z_2 \rvert}{\dist(z_2,\overline{\R})}, \qquad z_1,z_2 \in\C\setminus \R
\]
implies uniform equicontinuity of the potentials $\int\log\left|1-\frac{z-x^*_\infty}{x-x^*_\infty}\right|d\nu_{n_\ell}(x)$ and $\int\log\left|1-\frac{z-z_*}{x-z_*}\right|d\mu_{n_\ell}(x)$ on compact subsets of $\bbC\setminus\bbR$, and the Arzel\`a--Ascoli theorem implies uniform convergence on compacts.

By applying a conformal map $f\in\PSL(2,\R)$ and Lemma~\ref{lem:conformalinvar} we assume that $\infty\in(\ba,\bb)$ so that $\E$ and $K'$ are compact subsets of $\bbR$.

We note that $\Phi_\rho$, for $\rho=\mu,\nu$, are subharmonic in $\bbC$ and harmonic in $\bbC\setminus\supp\rho$. Thus, we only need to argue why $h$ is harmonic at $\infty$. Since $\supp \mu$ and $\E$ are bounded and $\mu, \nu$ are probability measures, we have
\begin{align}\label{eq:Dez16}
\Phi_\rho(z)=\log|z|+O(1)
\end{align}
as $z\to\infty$ and therefore, $h(z)=O(1)$ there and $h$ has a harmonic extension to $\infty$.

Finally, for $z\in\Omega_{K'}\setminus\{\infty\}$, $K(\cdot,z,z_*)$ is continuous on $K'$ and thus
\begin{align}\label{eq:1Dec29}
\lim_{\ell\to\infty}\Phi_{\mu_{n_\ell}}(z,z_*)=\Phi_{\mu}(z,z_*).
\end{align}
By the upper envelope theorem for q.e. $z\in\bbC$
\begin{align}\label{eq:2Dec29}
\limsup_{\ell\to\infty}\Phi_{\nu_{n_\ell}}(z,x_\infty^*)=\Phi_{\nu}(z,x_\infty^*).
\end{align}
Combining \eqref{eq:1Dec29} and \eqref{eq:2Dec29}, for q.e. $z\in\Omega_{K'}$ we have
\[
\limsup_{\ell\to\infty}h_{n_\ell}(z)=\lim_{\ell\to\infty}\beta_{n_\ell}+\limsup_{\ell\to\infty}\Phi_{\nu_{n_\ell}}(z,x_\infty^*)-\lim\limits_{\ell\to\infty}\Phi_{\mu_{n_\ell}}(z,z_*)=h(z). \qedhere
\]
\end{proof}

\begin{lemma}\label{lemma39}
	For $z\in\bbC\setminus\bbR$, we have
	\begin{align*}
	\liminf_{n\to\infty}\frac{1}{n}\log|F_n(z)|\geq \int G_\E(z,x)d\mu(x).
	\end{align*}
\end{lemma}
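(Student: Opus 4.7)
The plan is to combine the compactness/identification statement of Theorem~\ref{thm:June9} with potential-theoretic considerations to upgrade the already-established upper bound of Lemma~\ref{lem:uniformBound} to the matching lower bound.

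First I would fix $z_0 \in \bbC\setminus\bbR$ and choose a subsequence $(n_k)$ along which $h_{n_k}(z_0)$ converges to $\liminf_{n\to\infty} h_n(z_0)$. By compactness (Lemma~\ref{lem:CompactnessMeasure} for $\nu_n$, compactness of $\overline\bbR$ for $x_n^*$, and the two-sided bounds on the constants $\beta_n$ coming from Lemma~\ref{lem:uniformBound} and Lemma~\ref{lem:positive} evaluated at one fixed point in $\bbC_+$), I pass to a further subsequence so that $\nu_{n_k} \to \nu$ weakly, $x_{n_k}^* \to x_\infty^*$, and $\beta_{n_k} \to \beta\in\bbR$. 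Theorem~\ref{thm:June9} then supplies the limit function
\[
h(z) = \beta + \Phi_\nu(z,x_\infty^*) - \Phi_\mu(z,z_*)
\]
with $h_{n_k}\to h$ uniformly on compacts of $\bbC\setminus\bbR$; in particular $\liminf_{n\to\infty} h_n(z_0) = h(z_0)$, and it suffices to prove $h(z_0)\geq \int G_\E(z_0,x)\,d\mu(x)$.

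The key structural observation is that $h$ is a nonnegative superharmonic function on the Greenian domain $\Omega = \overline\bbC\setminus\E$ whose Riesz measure on $\Omega$ is exactly $\mu$. Superharmonicity is given by Theorem~\ref{thm:June9}, and nonnegativity on $\bbC\setminus\bbR$ follows from Lemma~\ref{lem:positive}. To upgrade nonnegativity to all of $\Omega$ I use that $\bbR$ has planar Lebesgue measure zero, so for any $z\in\Omega\cap\bbR$ (a point in a gap) the supermean-value inequality for superharmonic $h$ averages $h$ over small disks on which $h\geq 0$ holds a.e., giving $h(z)\geq 0$. For the identification of the Riesz measure, I use $\Delta \Phi_\rho = 2\pi\rho$ for a logarithmic potential and $\supp\nu\subset\E$ (Lemma~\ref{lem:CompactnessMeasure}), so that the distributional Laplacian of $h$ on $\Omega$ equals $-2\pi\mu$, matching the Green potential $\int G_\E(\cdot,x)\,d\mu(x)$ (which is superharmonic on $\Omega$ with the same Riesz measure by Lemma~\ref{lem:Potentials}).

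The Riesz decomposition theorem then writes
\[
h(z) = \int G_\E(z,x)\,d\mu(x) + u(z), \qquad z\in\Omega,
\]
where $u$ is the greatest harmonic minorant of $h$ on $\Omega$. Since $h\geq 0$ on $\Omega$ and the constant function $0$ is a harmonic minorant of $h$, we have $u\geq 0$, which yields $h(z_0)\geq \int G_\E(z_0,x)\,d\mu(x)$ and completes the proof. I expect the main subtlety to be the careful extension of nonnegativity of $h$ from $\bbC\setminus\bbR$ to the gaps of $\E$, and the clean identification of the Riesz measure of $h$ on $\Omega$; once these are in place, the greatest-harmonic-minorant argument makes the lower bound immediate.
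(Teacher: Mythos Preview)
Your proposal is correct and follows essentially the same approach as the paper: extract a subsequence realizing the $\liminf$, invoke Theorem~\ref{thm:June9} to identify the limit $h$ as a nonnegative superharmonic function on $\Omega$ with Riesz measure $\mu$, and apply the Riesz decomposition to conclude $h\geq \int G_\E(\cdot,x)\,d\mu(x)$ via nonnegativity of the greatest harmonic minorant. The only difference is that you spell out the extension of nonnegativity from $\bbC\setminus\bbR$ to the gaps via the supermean-value inequality, whereas the paper simply cites the positivity clause in the statement of Theorem~\ref{thm:June9}; the paper also first applies a conformal map to place $\infty\in\E$, which you do not need since you work directly with the abstract Riesz decomposition on $\Omega$.
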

\begin{proof}
By applying a conformal map $f$, we assume $\infty\in\E$ so that $\Omega\subset \bbC$. Fix $z\in \bbC\setminus\bbR$ and let $n_\ell$ be such that
\begin{align*}
\lim\limits_{\ell\to\infty}h_{n_\ell}(z)=\liminf_{n\to\infty}h_n(z)
\end{align*}
and
\begin{align*}
\lim\limits_{\ell\to\infty}h_{n_\ell}(z)=h(z)=\beta+\Phi_\nu(z,x_\infty)-\Phi_\mu(z,z_*)
\end{align*}
in the sense of Theorem \ref{thm:June9}. Thus, $h$ defines a positive superharmonic function on $\Omega$ and
\begin{align*}
-\Delta h=\Delta\Phi_\mu(z,z_*)=2\pi\mu.
\end{align*}
 By the Riesz decomposition theorem \cite[Theorem 4.4.1]{Classpotential}, we have
\begin{align*}
h(z)=\int G_\E(z,x)d\mu(x)+u(z),
\end{align*}
where $u(z)$ is the greatest harmonic minorant of $h$. Since $h$ is positive, it follows that $u\geq 0$. Thus, we obtain
\begin{align*}
h(z)\geq \int G_\E(z,x)d\mu(x)
\end{align*}
and the claim follows.
\end{proof}

We can now prove the root asymptotics of $F_n$ and convergence of generalized zero counting measures:

\begin{proof}[Proof of Theorem~\ref{thm:RootAsymptotics}]
	Root asymptotics follow by combining Lemma \ref{lem:uniformBound} and Lemma \ref{lemma39}.

By conformal invariance, we assume that $\infty\in \supp\mu$ so that $\Omega_\mu:=\overline{\bbC}\setminus\supp\mu \subset\bbC$ and $\E$ is compact in $\bbR$. Due to  Lemma \ref{lem:CompactnessMeasure} the family $\{\nu_n\}$ is precompact and we can consider a weakly convergent subsequence
 $\nu=\lim_{j\to\infty}\nu_{n_j}$. Moreover, by Lemma  \ref{lem:Potentials}, $\int G_\E(z,x)d\mu(x)$ defines a subharmonic function in
$\Omega_\mu$. Let us compute its Riesz measure. Take $\phi\in C^\infty_c(\Omega_\mu)$ and compute
\begin{align*}
\iint G_\E(z,x)d\mu(x)\Delta\phi(z)dA(z)=\iint G_\E(z,x)\Delta\phi(z)dA(z)d\mu(x)=2\pi\iint \omega_{\E}(dz,x)d\mu(x)\phi(z)dA(z),
\end{align*}
where Fubini's theorem is justified since $\supp(\phi)\subset \overline{\C}\setminus\supp(\mu)$,  $\sup_{\supp(\phi)\times \supp(\mu)}|G_\E(z,x)|<\infty$.
That is,
\begin{align*}
\frac1{2\pi}\Delta\left(\int G_\E(z,x)d\mu(x)\right)=\int  \omega_{\E}(dz,x)d\mu(x)=:\rho.
\end{align*}
Root asymptotics and Theorem \ref{thm:June9} imply that on $\bbC\setminus\bbR$
\begin{align*}
\int G_\E(z,x)d\mu(x)=\beta+\Phi_{\nu}(z,x_\infty)-\Phi_{\mu}(z,z_*).
\end{align*}
Applying the weak identity principle for subharmonic functions \cite[Theorem 2.7.5]{RansfordPotTheorie}, this equality also holds on $\Omega_\mu$. Thus, computing the distributional Laplacian on both sides yields $\nu=\rho$ and $\wlim\nu_n=\rho$.
\end{proof}

\begin{lemma}\label{lem:shrinking}
Let us fix a gap $(\ba,\bb)$ and let $[u_n,v_n]=\E_n\cap[\ba,\bb]$. Let us pass to a subsequence, such that there are limits $u_\infty,v_\infty\in[\ba,\bb]$, i.e.,
\begin{align*}
\lim\limits_{\ell\to\infty}v_{n_\ell}=v_\infty,\quad \lim\limits_{\ell\to\infty}u_{n_\ell}=u_\infty
\end{align*}
Then
\begin{align*}
u_\infty=v_\infty.
\end{align*}
\end{lemma}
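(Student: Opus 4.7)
The plan is to argue by contradiction: suppose $u_\infty < v_\infty$ and derive a contradiction from root asymptotics. First, I would pass to a further subsequence, still denoted $(n_\ell)$, along which $\nu_{n_\ell} \to \nu$ weakly (Lemma~\ref{lem:CompactnessMeasure}), $x_{n_\ell}^* \to x_\infty^*$ (by compactness of $\overline{\bbR}$ and the hypothesis that $(x_n^*)$ has no accumulation on $\E$), and $\beta_{n_\ell} \to \beta \in \bbR$ (Theorem~\ref{thm:June9}). If $x_\infty^* \in (\ba,\bb)$, then $x_{n_\ell}^* \in (\ba,\bb)$ eventually, and Theorem~\ref{thm:preimage}\eqref{it:EnExtremalGap} forces $u_{n_\ell} = v_{n_\ell} = \ba$, giving the conclusion trivially. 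I therefore assume $x_\infty^* \notin (\ba,\bb)$ from now on.

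The key geometric step is to show $(u_\infty, v_\infty) \cap K_\bC = \emptyset$. Suppose some pole $\bc_k$ lies in $(u_\infty, v_\infty)$. For $\ell$ large enough, both $n_\ell \ge k$ and $\bc_k \in (u_{n_\ell}, v_{n_\ell}) \subset \E_{n_\ell}$ hold. Since $u_{n_\ell} < v_{n_\ell}$ means the gap admits a genuine extension, Theorem~\ref{thm:preimage}\eqref{it:EnDegeneration} (in contrapositive) combined with Theorem~\ref{thm:MarkovCorrection}\eqref{it:zerosimple} forces $D_{n_\ell}^0(\bc_k) = 0$, so $\bc_k$ is an actual pole of $F_{n_\ell}$, contradicting $\bc_k \in \E_{n_\ell}$. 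For a limit point $c \in (u_\infty, v_\infty) \cap K_\bC$ of poles $\bc_{k_j} \to c$, infinitely many $\bc_{k_j}$ already lie in $(u_\infty, v_\infty)$ and the argument above applies to each, so the claim follows.

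With $(u_\infty, v_\infty) \subset \Omega_{K'}$ established, Theorem~\ref{thm:June9} together with the identity derived in the proof of Theorem~\ref{thm:RootAsymptotics} gives, for q.e.\ $z \in \Omega_{K'}$,
\[
\limsup_{\ell \to \infty} h_{n_\ell}(z) = \beta + \Phi_\nu(z, x_\infty^*) - \Phi_\mu(z, z_*) = \int G_\E(z,x)\,d\mu(x) =: H(z).
\]
Since $(u_\infty, v_\infty)$ has positive Lebesgue measure, hence positive logarithmic capacity, I can pick $z_0 \in (u_\infty, v_\infty)$ at which this equality holds. On the one hand, $H(z_0) > 0$ strictly, since $z_0 \notin \E \cup \supp\mu$ and $G_\E(z_0, \cdot)$ is strictly positive on $\overline{\bbR} \setminus \E$. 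On the other hand, for $\ell$ large $z_0 \in [u_{n_\ell}, v_{n_\ell}] \subset \E_{n_\ell}$ forces $|F_{n_\ell}(z_0)| \le 1$ and hence $h_{n_\ell}(z_0) \le 0$, so $\limsup_\ell h_{n_\ell}(z_0) \le 0$. This contradicts $H(z_0) > 0$, so $u_\infty = v_\infty$ as claimed.

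The main obstacle I foresee is the geometric separation step: ruling out $K_\bC$ inside $(u_\infty, v_\infty)$ relies on the delicate interplay between the pole structure and the $n$-extension described by Theorems~\ref{thm:MarkovCorrection} and~\ref{thm:preimage}, specifically the fact that a pole in a gap with $D_n^0$ equal to one forbids any extension of $\E_n$ into that gap. Once this separation is in place, the contradiction between the eventual containment $z_0 \in \E_{n_\ell}$ and the strict positivity of the Green potential $H$ follows immediately from the results of Section~\ref{sec:Root}.
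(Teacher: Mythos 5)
Your proposal is correct and follows essentially the same strategy as the paper. Both proofs rest on two pillars: (a) the extension interval $[u_n,v_n]$ cannot contain any point of $\supp D_n^\infty$, by combining Theorem~\ref{thm:preimage}\eqref{it:EnDegeneration} with simplicity of generalized zeros, and (b) the quasi-everywhere identity $\limsup_\ell h_{n_\ell}(z)=\int G_\E(z,x)\,d\mu(x)>0$ on $\Omega_{K'}$ from Theorem~\ref{thm:June9} and the proof of Theorem~\ref{thm:RootAsymptotics}, which contradicts $h_{n_\ell}\le 0$ on the eventual extension.

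A few minor stylistic differences worth noting, none of which affects correctness. First, your explicit case split on $x_\infty^*\in(\ba,\bb)$ is harmless but redundant: if $\delta:=v_\infty-u_\infty>0$ then $x_\infty^*\in(\ba,\bb)$ is already impossible, since Theorem~\ref{thm:preimage}\eqref{it:EnExtremalGap} would force $u_{n_\ell}=v_{n_\ell}=\ba$ eventually. Second, you establish the stronger separation $(u_\infty,v_\infty)\cap K_\bC=\emptyset$ directly, while the paper works with a compact subinterval $A=[u_\infty+\e,v_\infty-\e]$ and shows $A\cap K'=\emptyset$ (taking care about the closure by shrinking $\e$); your route is cleaner since it avoids the closure issue entirely. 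Third, the paper argues the contradiction on all of $A$, noting $A$ has positive capacity, whereas you select a single point $z_0$ where the q.e.\ equality holds and where eventual containment in $\E_{n_\ell}$ forces $h_{n_\ell}(z_0)\le 0$; this is an equivalent way to invoke positive capacity. In short: same proof, slightly different bookkeeping.
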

\begin{proof}
 By conformal invariance we can assume that $\infty\notin(\ba,\bb)$ and consider again $\{h_n\}$ as a family of subharmonic functions in $\Omega_\bC$. We have $\E_n\cap \supp D_n^\infty=\emptyset$, since either $F_n$ has a pole at $\bc$ or if $F_n$ has a generalized zero at $\bc$ then by \eqref{it:EnDegeneration} of Theorem \ref{thm:preimage} there is no extension in this gap. Due to Theorem  \ref{thm:RootAsymptotics},
$
\lim_{n\to\infty} h_n=\int G_\E(\cdot,x)d\mu(x).
$
Assume that $v_\infty-u_\infty=\delta>0$. For any $0<\e<\d/2$, there exists $\ell_0$ such that for all $\ell>\ell_0$, we have
 \begin{align}\label{eq:1August31}
 A:=[u_\infty+\e, v_\infty-\e]\subset [u_{n_\ell},v_{n_\ell}].
 \end{align}
 Therefore, defining $K'$ as in \eqref{eq:K'}, we have
$
A\cap K'=\emptyset.
 $
 Note that first we only have empty intersection without taking the closure, but since $\e$ above can be made smaller, we also conclude that it holds for $K'$.

By Theorem \ref{thm:June9} we have for q.e. $z\in  \Omega_{K'}$
\begin{align*}
\limsup_{\ell\to\infty} h_{n_\ell}(z)=\int G_\E(z,x)d\mu(x).
\end{align*}
Since $\supp\mu\subset K'$, it follows from Lemma \ref{lem:Potentials} that  $\int G_\E(z,x)d\mu(x)>0$ for every $z\in \Omega_{K'}$ and therefore in particular for $z\in A$. On the other hand, by definition of $\E_n$ and \eqref{eq:1August31}, we have
$h_{n_\ell}(z)\leq 0$ there. Since $A$ has positive capacity, this gives a contradiction.
\end{proof}

\section{Szeg\H o--Widom asysmptotics}\label{sec:SzegoWidom}
\subsection{Asymptotics of $\log|F_n|$}
	In the following in addition to the assumptions made in Section \ref{sec:Root}, we assume that $\E$ is a regular Parreau--Widom set. Let us recall its definition. First we assume that $\E$ is regular for the Dirichlet problem. Let $z_0\in\overline\R\setminus\E$ and denote the gap containing $z_0$ by $(\ba,\bb)$. Due to regularity and concavity of the Green function, $G_\E(z,z_0)$ has exactly one critical point in each gap $(\ba_j,\bb_j)$ except in the gap $(\ba,\bb)$. Let us denote these critical points of $G_\E(z,z_0)$ by $\xi_j$. Then we call $\E$ a regular Parreau--Widom set, if
\begin{align}\label{eq:PWcondition}
\mathcal{PW}_\E(z_0)=\sum_{j} G_\E(\xi_j,z_0)<\infty.
\end{align}
It is well known that this does not depend on the choice of $z_0$; see e.g.  \cite[Chapter V]{Hasumi}.

Denote the topological circle $\bbT_j = [\ba_j, \bb_j ] /_{\ba_j \sim \bb_j}$. Since $\ba_j, \bb_j$ are Dirichlet regular points,
\[
\lim_{x\downarrow \ba_j} G_\E(z,x) = \lim_{x\uparrow \bb_j} G_\E(z,x) = 0
\]
so with the usual convention
\begin{equation}\label{19nov1}
G_\E(z,\ba_j) = G_\E(z,\bb_j) = 0,
\end{equation}
the Green function $G_\E(z,t_j)$ depends continuously on $t_j \in \bbT_j$. We also consider the compact space
\begin{align}\label{def:setDivisors}
\cD(\E)=\prod_{j=0}^{\infty} \bbT_j,
\end{align}
equipped with the product topology. As for divisors, a functional interpretation will be convenient. Thus, for an element $D\in\cD(\E)$, $D=(t_j)_{j=0}^\infty$, we also use the functional interpretation
\begin{align*}
D(x)=\sum_{j=0}^\infty \chi_{\{t_j\}}(x).
\end{align*}

We want to associate to the divisor $D_n^0$ an element $D_n\in\cD(\E)$. In principle we want to define $D_n$ as the restriction of $D_n^0$ to $\overline{\R}\setminus \E$.  Recall that due to Theorem \ref{thm:MarkovCorrection}\eqref{it:1zero} and \eqref{it:noZeroGap}, there is at most one generalized zero in each gap and no generalized zero in the gap containing $x^*$.
Since $\deg D_n^0=n$, almost all gaps do not contain a generalized zero. To overcome this, we define
\begin{align}\label{eq:DnDivisor}
D_n=(t_n^j)_{j=0}^\infty,
\end{align}
where $t^j_n=t$, if there is $t\in [\ba_j,\bb_j]$ such that  $D_n^0(t)=1$ and otherwise we define $t_n^j$ to be the coset of $\ba_j \sim \bb_j$ in $\bbT_j$. Due to \eqref{19nov1}, these choices formally complete the definition of $D_n \in \cD(\E)$ without affecting certain sums below.

In the previous section we have described root asymptotics, i.e., asymptotics of $\frac{1}{n}\log|F_n(z)|$. The following theorem describes asymptotics of $\log|F_n(z)|$ and is the key to prove Szeg\H o-Widom asymptotics in Theorem \ref{thm:SzegoWidomAsymp}.
\begin{theorem}\label{thm:harmonic}
	Let $n_\ell$ be such that $\lim_{\ell\to\infty}x^*_{n_\ell}=x^*_{\infty}\in(\ba,\bb)$ and $\lim_{\ell\to\infty}D_{n_\ell}=D$. Then for $z\in\overline{\bbC}\setminus(\overline\R\setminus(\ba,\bb))$, we have
	\begin{align}\label{eq:limitHarmonic}
	\lim_{\ell\to\infty}\left(\log|F_{n_\ell}(z)|-\sum_{\bc}D^\infty_{n_\ell}(\bc)G_\E(z,\bc)\right)=-\log 2-\sum_{t}D(t)G_\E(z,t).
	\end{align}
	Moreover, $D(\ba)=1$.
\end{theorem}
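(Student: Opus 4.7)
The plan starts from Theorem~\ref{thm:CombRep} and \eqref{eq:complexGreen}, which together give
\[
\log|F_n(z)| = -\log 2 + \sum_{\bc}(F_n)_\infty(\bc)\, G_{\E_n}(z,\bc) + \log\bigl|1+B_n(z)^2\bigr|.
\]
By Lemma~\ref{lem:zeroLimit} the last term tends to zero (uniformly on compacts of $\bbC\setminus\bbR$ and, along the chosen subsequence, also on $(\ba,\bb)$), so the $-\log 2$ of the target is already accounted for. The relation $D_n^0 = (F_n)_0 + D_n^\infty - (F_n)_\infty$ together with Theorem~\ref{thm:MarkovCorrection}\eqref{it:zerosimple} shows $(F_n)_\infty(\bc) - D_n^\infty(\bc) = -1$ exactly at poles of $F_n$ whose order has dropped (``pole-type'' generalized zeros) and $0$ elsewhere. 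Subtracting $\sum_{\bc} D_{n_\ell}^\infty(\bc) G_\E(z,\bc)$ from the display thus yields
\[
\log|F_{n_\ell}(z)| - \sum_{\bc} D_{n_\ell}^\infty(\bc) G_\E(z,\bc) = -\log 2 - S_1(n_\ell) - R(n_\ell) + o(1),
\]
with $S_1(n) := \sum_{\bc}(F_n)_\infty(\bc)[G_\E(z,\bc)-G_{\E_n}(z,\bc)]$ and $R(n) := \sum_{\bc:\,(F_n)_\infty(\bc)<D_n^\infty(\bc)} G_\E(z,\bc)$, reducing the task to proving $S_1(n_\ell)+R(n_\ell)\to\sum_t D(t)G_\E(z,t)$.

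To handle $S_1$ I apply Lemma~\ref{lem:KoosisAnalog} to the pair $\E\subset\E_n$ and interchange summations, obtaining
\[
S_1(n) = \sum_j \int_{u_n^j}^{v_n^j} G_\E(z,x)\, d\rho_n(x),\qquad d\rho_n(x) := \sum_{\bc}(F_n)_\infty(\bc)\,\omega_{\E_n}(dx,\bc),
\]
the outer sum running over the gaps $(\ba_j,\bb_j)$ of $\E$. By Lemma~\ref{lem:HarmonicMeasureEn}, $\rho_n$ has total mass $1$ on each open band of $\E_n$, and by Lemma~\ref{lem:shrinking} each band $[u_{n_\ell}^j,v_{n_\ell}^j]$ shrinks to a single point $t^j_\infty$; hence the restricted $\rho_{n_\ell}$ converges weakly to $\delta_{t^j_\infty}$. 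The assumption $z\in\overline{\bbC}\setminus(\overline{\bbR}\setminus(\ba,\bb))$ keeps $z$ outside every gap other than $(\ba,\bb)$, so $G_\E(z,\cdot)$ is continuous on each $[\ba_j,\bb_j]$ with $j\ne 0$, and each inner integral tends to $G_\E(z,t^j_\infty)$.

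The heart of the argument is a gap-by-gap identification of $t^j_\infty$ (together with $R$) against the divisor point $t^j$ of $D$. If the $j$-th gap records an actual zero $x_{j,\ell}$ of $F_{n_\ell}$, then $x_{j,\ell}\in[u_{n_\ell}^j,v_{n_\ell}^j]$, so $t^j_\infty = \lim_\ell x_{j,\ell} = t^j$, and $S_1$ contributes exactly $G_\E(z,t^j)$. If the gap records a pole reduction at some $\bc$, Theorem~\ref{thm:preimage}\eqref{it:EnDegeneration} forces the band to degenerate to $\{\ba_j\}$, so the $S_1$-contribution vanishes while $R(n_\ell)$ supplies $G_\E(z,\bc)$. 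In gaps with no generalized zero, both the band and the divisor collapse to the identified endpoint $[\ba_j]=[\bb_j]\in\bbT_j$, contributing $0$ via the convention \eqref{19nov1}. The interchange of limit with the outer sum is justified by dominated convergence: each summand of $S_1$ is bounded by $\max_{x \in [\ba_j,\bb_j]} G_\E(z,x)$, attained at the critical point of $G_\E(z,\cdot)$ in that gap, and summability over $j$ is exactly the Parreau--Widom condition at $z$ (independent of base point). This gives $S_1(n_\ell)+R(n_\ell)\to\sum_t D(t)G_\E(z,t)$, proving the main limit. The \emph{moreover} claim $D(\ba)=1$ is then immediate: Theorem~\ref{thm:MarkovCorrection}\eqref{it:noZeroGap} excludes any generalized zero of $F_{n_\ell}$ in the extremal gap $(\ba,\bb)$, so by the convention in \eqref{eq:DnDivisor} the coordinate $t^0_{n_\ell}$ is the coset $[\ba]=[\bb]\in\bbT_0$ for every $\ell$, and $D$ inherits this in the limit. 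The main obstacle I foresee is the dominated convergence step, which requires base-point independence of Parreau--Widom for complex $z$ or $z\in(\ba,\bb)$, and careful bookkeeping to route each gap's contribution to the correct term $S_1$ or $R$.
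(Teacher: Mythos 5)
Your proposal is correct and follows essentially the same route as the paper's proof: the representation via Theorem~\ref{thm:CombRep} together with Lemma~\ref{lem:zeroLimit}, the decomposition of $\log|F_n|-\sum_\bc D_n^\infty(\bc)G_\E(z,\bc)$ into a Koosis-integral term and a pole-reduction term (the paper's display \eqref{eq:2September3}), the gap-by-gap analysis using Lemmas~\ref{lem:KoosisAnalog}, \ref{lem:HarmonicMeasureEn}, \ref{lem:shrinking} and Theorem~\ref{thm:preimage}\eqref{it:EnDegeneration}, and the Parreau--Widom-based domination to interchange limit and sum. Your argument for $D(\ba)=1$ via Theorem~\ref{thm:MarkovCorrection}\eqref{it:noZeroGap} and the coset convention is the same in substance as the paper's appeal to Theorem~\ref{thm:preimage}\eqref{it:EnExtremalGap}.
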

\begin{proof}
	Define
	\begin{align*}
	H_{n_\ell}(z)=\sum_{\bc}\left((F_{n_\ell})_\infty(\bc)G_{\E_{n_\ell}}(z,\bc)-D_{n_\ell}^\infty(\bc)G_\E(z,\bc)\right).
	\end{align*}Due to \eqref{eq:logFn} and Lemma \ref{lem:zeroLimit} it remains to show that
	\begin{align}\label{eq:1September4}
	\lim\limits_{\ell\to\infty}H_{n_\ell}(z)=-\sum_{t}D(t)G_\E(z,t).
	\end{align}
	Let us assume without loss of generality that all $x_{n_\ell}^*$ lie in $(\ba,\bb)$. Recall that by Theorem \ref{thm:preimage}\eqref{it:EnExtremalGap} $D_{n_\ell}(\ba)=1$, showing that $D(\ba)=1$. Moreover, Theorem \ref{thm:preimage}\eqref{it:EnExtremalGap} implies $\E_{n_\ell}\cap(\ba,\bb)=\emptyset$ and since $G_{\E}(z,\bc)-G_{\E_n}(z,\bc)\geq 0$ and $(F_n)_\infty= D_n^\infty$ on $(\ba,\bb)$, we conclude that $(-H_{n_\ell})_{\ell}$ defines a family of positive harmonic functions in $\overline{\bbC}\setminus(\overline\R\setminus(\ba,\bb))$ and is thus by the Harnack principle precompact in the space of positive harmonic functions together with the function which is identically $+\infty$ equipped with uniform convergence on compact subsets.

	Let us now turn to the other gaps. Let $[u^j_n,v^j_n]$ denote the extension in the $j$th gap of the set $\E_n$ as in Theorem \ref{thm:preimage} and consider
		\begin{align*}
		G_{\E_{n_\ell}}(z,\bc)-G_\E(z,\bc)
		\end{align*}
		as a subharmonic function in $\Omega=\overline{\bbC}\setminus \E$, which vanishes on $\E$.  Thus, by Lemma \ref{lem:KoosisAnalog}
		\begin{align*}
		G_{\E_{n_\ell}}(z,\bc)-G_\E(z,\bc)=-\sum_{j}\int_{u^j_{n_\ell}}^{v^j_{n_\ell}}G_\E(z,x)\omega_{\E_{n_\ell}}(\dx,\bc),
		\end{align*}
	 Let us define
		\begin{align*}
		\omega_{n_\ell}(dx)=\sum_{\bc}(F_{n_\ell})_\infty(\bc)\omega_{\E_{n_\ell}}(\dx,\bc),
		\end{align*}
		and recall that this is just a finite sum. We conclude that
		\begin{align}\label{eq:2September3}
		H_{n_\ell}(z)&=\sum_{\bc}(F_{n_\ell})_\infty(\bc)\left(G_{\E_{n_\ell}}(z,\bc)-G_\E(z,\bc)\right)-\sum_{\bc}\left((D_{n_\ell}^\infty(\bc)-(F_{n_\ell})_\infty(\bc))G_\E(z,\bc)\right)\nonumber\\
		=&-\sum_{j=0}^{\infty}\int_{u^j_{n_\ell}}^{v^j_{n_\ell}}G_\E(z,x)\omega_{n_\ell}(dx)-\sum_{\bc}\left((D_{n_\ell}^\infty(\bc)-(F_{n_\ell})_\infty(\bc))G_\E(z,\bc)\right).
		\end{align}

	 Due to Lemma \ref{lem:finiteInterlacing} there are finitely many gaps containing poles. So by partitioning into finitely many subsequences, we can assume that for each $j$, for all $\ell > 0$ either $D_{n_\ell}^\infty(t^j_{n_\ell})>0$ or $D_{n_\ell}^\infty(t^j_{n_\ell})=0$, i.e., in the first case $t^j_{n_\ell}$ corresponds to a pole reduction of $F_{n_\ell}$. We will show that both cases lead to the same limit.

	Let us first consider a gap $(\ba_j,\bb_j)$ so that $D_{n_\ell}^\infty(t^j_{n_\ell})=0$ and let us assume that $t^j_{n_\ell}\to t^j_{\infty}\in(\ba_j,\bb_j)$.
	Due to Lemma \ref{lem:shrinking},
	\begin{align}\label{eq:2September2}
	\lim_{\ell}u_{n_\ell}^j=\lim_{\ell}v_{n_\ell}^j=t^j_{\infty}.
	\end{align}
	In particular for $\ell$ big enough we have $[u_{n_\ell}^j,v_{n_\ell}^j]\subset(\ba_j,\bb_j)$ and it follows then from Lemma \ref{lem:HarmonicMeasureEn} that
	\begin{align*}
	\omega_{n_\ell}([u_{n_\ell}^j,v_{n_\ell}^j])=1.
	\end{align*}
	Hence,
	\begin{align*}
	\omega_{n_\ell}|[u_{n_\ell}^j,v_{n_\ell}^j]\to \delta_{t^j_{\infty}}
	\end{align*}
	and therefore
	\begin{align*}
	\int_{u^j_{n_\ell}}^{v^j_{n_\ell}}G_\E(z,x)	\omega_{n_\ell}(dx)\to G_\E(z,t_\infty^j).
	\end{align*}

	If $t^j_{\infty}=\ba_j$, using that $G_\E(z,\cdot)$ vanishes at $\ba_j$ we conclude as above that
	\begin{align*}
		\int_{u^j_{n_\ell}}^{v^j_{n_\ell}}G_\E(z,x)	\omega_{n_\ell}(dx)\to 0= G_\E(z,t_\infty^j).
	\end{align*}

	It remains to discuss the gaps where  $D_{n_\ell}^\infty(t^j_{n_\ell})=1$. Due to Theorem \ref{thm:preimage}\eqref{it:EnDegeneration},  $u_{n_\ell}^j=v_{n_\ell}^j=\ba_j$, but in this case
	\begin{align*}
	D_{n_\ell}^\infty(t^j_{n_\ell})-(F_{n_\ell})_\infty(t^j_{n_\ell})=1.
	\end{align*}
	Thus, these are exactly the terms that contribute in the second sum in  \eqref{eq:2September3}. Since $G_\E$ is continuous we conclude that $G_\E(z,t^j_{n_\ell})\to G_\E(z,t^j_\infty)$.
	Hence, if we are allowed to interchange the limit and summation in \eqref{eq:2September3}, we have proved \eqref{eq:1September4}.
	As in \cite[Chapter V]{Hasumi}, by a Harnack-type argument, the Parreau--Widom condition implies
 $\sum_{j}\sup_{x\in(\ba_j,\bb_j)}G_\E(z,x)<\infty$ and since moreover $\omega_{n_\ell}((\ba_j,\bb_j))\leq 1$ interchanging the limits is justified and we are done.
\end{proof}

\subsection{Blaschke products, character-automorphic Hardy spaces and a related $H^\infty$ extremal problem}
We will now pass from asymptotics of the superharmonic function $\log|F_n|$ to asymptotics of the rational function $F_n$. Thus, essentially in \eqref{eq:limitHarmonic} we need to add harmonic conjugates and apply $\exp$. Thus, the left-hand side in \eqref{eq:limitHarmonic} will lead to  complex Green functions
\begin{align*}
B_\E(z,\bc)=e^{-(G_\E(z,\bc)+i\widetilde{G_\E(z,\bc)})},
\end{align*}
as defined in \eqref{eq:ComplexGreenGeneral}. We have already mentioned that in general $B_\E(z,\bc)$ is a multi-valued function in $\Omega$. Let us fix a normalization gap $(\ba,\bb)$ and $z_0\in (\ba,\bb)$ and define  $\E^j=[z_0,\ba_j]\cap\E$. Let $\tilde\g_j$ be the generator of the fundamental group $\pi_1(\Omega,z_0)$, which starts at $z_0$ and passes through the gap $(\ba_j,\bb_j)$, encircling the set $\E^j$ once. If we extend $B_\E(z,\bc)$ analytically along $\g_j$, we get
\begin{align}\label{eq:complexGreen2}
B_\E(\tilde\g_j(z),\bc)=e^{2\pi i \omega_\E(\E^j,\bc)}B_\E(z,\bc).
\end{align}

When working with multi-valued functions, it is convenient to consider them as single-valued functions on the universal cover of $\Omega=\overline{\bbC}\setminus\E$. By means of the Koebe--Poincar\'e uniformization theorem, $\Omega$ is uniformized by the disk $\D$; that is, there exists a Fuchsian group $\G$ and a meromorphic function $\bz : \D \to \Omega$ with the following properties:
\begin{align*}
1. \quad & \forall z \in \Omega \;\; \exists \, \z \in \D : \bz(\z) = z, \\
2. \quad & \bz(\z_1) = \bz(\z_2) \iff \exists \, \tilde\g \in \G : \z_1 = \tilde\g(\z_2).
\end{align*}
We fix it by the normalization $\bz(0)=z_0$, $\bz'(0)>0$. For Denjoy domains the covering map can be explicitly constructed \cite[Section 4]{RubelRyff70}. Moreover, there exists a Ford fundamental domain $\cF$, so that $\bz:\cF\to\Omega$ is bijective. We denote by $\G^*$ the group of unitary characters of $\G$; that is, group homomorphisms from $\G$ into $\T := \R/\Z$.  By the covering space formalism, $\G$ is group isomorphic to the fundamental group $\pi_1(\Omega,z_0)$. For a fixed $\z_1 \in \D$ we denote by
\begin{align}\label{def:Blaschke}
b(\z, \z_1) := \prod_{\g \in \G}\frac{\g(\z_1)}{|\g(\z_1)|}\frac{\g(\z_1)-\z}{1-\overline{\g(\z_1)}\z},
\end{align}
the standard Blaschke product. Since $\Cap_\E>0$, $\G$ is of convergent type and thus the product is indeed convergent. The functions $b(\z,\z_1)$ are character-automorphic, i.e., there exists $\chi_{z_1}\in\G^*$ such that
\begin{align*}
b(\g(\z),\z_1)=e^{2\pi i\chi_{z_1}(\g)}b(\z,\z_1), \quad \forall\g\in\G.
\end{align*}
If $z_1=\bz(\z_1)$, then these Blaschke product are related to the Green function of $\Omega$, by
\begin{align*}
-\log|b(\z,\z_1)|=G_\E(\bz(\z),z_1).
\end{align*}
Thus, we can regard the multi-valued functions $B_\E(z,z_1)$ as single-valued character-automorphic function on the universal cover.
\begin{definition}
Let $f$  be analytic in $\D$. We call $f$ ($\G^*$-) character-automorphic with character $\a\in\G^*$ if
\begin{align*}
f\circ\g =e^{2\pi i \a(\g)}f,\quad \forall\g\in\G.
\end{align*}
Similarly, if $F$ is an analytic function on $\Omega$, then we call $F$ ($\pi_1(\Omega)^*$-) character-automorphic with character $\a\in \pi_1(\Omega)^*$, if
\begin{align*}
F\circ\tilde\g =e^{2\pi i \a(\tilde\g)}F,\quad \forall\tilde\g\in\pi_1(\Omega).
\end{align*}
\end{definition}
Via the covering map $\bz$, $\G^*$- and $\pi_1(\Omega)^*$-character-automorphic functions are in one-to-one correspondence. The advantage is that  $\G^*$- character-automorphic functions on the universal cover $\D$ are single-valued. Therefore, we will formulate all convergence results for the corresponding single-valued lifts on $\bbD$.

Recall that $H^\infty_\Omega(\a)$ denotes the space of bounded analytic character-automorphic functions, $F$, in $\Omega$; see \eqref{eq:HinftyNorm}.
It is a fundamental result of Widom \cite{Widom71} that if $\E$ is a Parreau--Widom set, then $H^\infty_\Omega(\a)\neq\{0\}$ for every $\a\in\pi_1(\Omega)^*$.
The Widom maximizer for $x_*$ and character $\a$ is the unique function $W(z;\a,x_*)$ in the unit ball of $H^\infty_\Omega(\a)$ such that
\begin{align}\label{eq:WidomMax}
W(x_*;\alpha,x_*)=\max\{\Re F(x_*): F\in H^\infty_\Omega(\a), \|F\|_\Omega\leq 1\}.
\end{align}

We are now ready to state a definition of Direct Cauchy theorem. It is usually stated as a point evaluation property for certain $H^1$ functions in $\Omega$ \cite{Hasumi}, and hence the name, but it can be equivalently defined by the following:
\begin{definition}
We say that the Direct Cauchy Theorem (DCT) holds in $\Omega$, if for one and hence for all $x_*\in\Omega$, the map $\a\mapsto W(x_*;\a,x_*)$ is continuous on $\pi_1(\Omega)^*$ equipped with the topology dual to the discrete topology on $\G$.
\end{definition}

Let us for notational convenience also define $B_\E(z,z_0)\equiv 1$, if $z_0\in\E$. Note that generally the harmonic conjugate is fixed up to an additive constant. So an additional normalization is required in \eqref{eq:ComplexGreenGeneral}. Since we will have varying normalizations, we will not fix it for a single function, but assume instead that for  products of complex Green functions all of them are normalized to be positive at the same point.
In this way, we can associate to any divisor $D\in\cD(\E)$ a product of complex Green functions, in other words a Blaschke product, by
\begin{align}\label{eq:BlaschkeProdDiv}
B_\E(z,D)=B_\E(z,D,\phi)=e^{i\phi}\prod_{t}D(t)B_\E(z,t).
\end{align}
Note that
\begin{align*}
-\log|B_\E(z,D)|=\sum_{t}D(t)G_\E(z,t),
\end{align*}
that is, these are exactly expression of the type appearing in \eqref{eq:limitHarmonic}. Moreover, the Widom condition guarantees that $B_\E(z,D)$ converges to a non-trivial function for any $D\in\cD(\E)$. Let us define the restriction
\begin{align*}
\cD_k(\E)=\{D\in\cD(\E): D(\ba_k)=1\}.
\end{align*}
For $D\in\cD_k(\E)$ it is natural to normalize $B_\E(z,D,\phi)$ such that $B_\E(z,D,\phi)>0$ on $(\ba_k,\bb_k)$ which we fixes $\phi$. To be more precise, since complex Green functions are defined locally and then extended analytically, this normalization holds only for one branch. Let us always assume that this branch corresponds to the values of the lift to $\D$ in the fundamental domain $\cF$.

The Abel map is an important object in the spectral theory of self adjoint difference and differential operators. It is a map $ \pi$ from Divisors $\cD_k(\E)$ to the characters $\pi_1(\Omega)^*$. However, there is a subtle difference between this Abel map and the Abel map which we will implicitly use for Problem \ref{prob3}. It can be seen from the definition of $\cD(\E)$. In spectral theory one would usually take a two-fold cover of the interval $[\ba_j,\bb_j]$ and identify the endpoints of the two copies of the interval, whereas in our case we only took one copy and identified $\ba_j\sim \bb_j$. This map $ \pi$ is also the reason why the DCT property is needed, because this assumption makes $\pi$ a bijection which is used in the proof of the following theorem. The proof relies on the fundamental construction of the generalized Abel map from Sodin and Yuditskii \cite{SoYud97}.
\begin{theorem}[{\cite[Theorem 5.1]{ChriSiZiYuDuke},\cite[Proposition 2.3]{EichYuSbornik}}]\label{eq:thmWidomMax}
Let $\Omega$ be a regular Parreau--Widom domain such that DCT holds. Let $D\in \cD_k(\E)$ and let $\a$ be the character of $B_\E(z,D)$ defined by  \eqref{eq:BlaschkeProdDiv}. Then, for $x_*\in(\ba_k,\bb_k)$ we have
\begin{align*}
W(z;\a,x_*)=B_\E(z,D).
\end{align*}
\end{theorem}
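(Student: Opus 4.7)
The plan is to argue via the generalized Abel map of Sodin--Yuditskii, which under DCT provides a continuous bijection between the divisor space $\cD_k(\E)$ and the character group $\pi_1(\Omega)^*$, and thus lets us identify the Widom maximizer from its character alone.

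First, I would check that $B_\E(z,D)$, defined by the Blaschke product \eqref{eq:BlaschkeProdDiv}, is an admissible candidate in the extremal problem \eqref{eq:WidomMax}. The Parreau--Widom condition $\mathcal{PW}_\E(z_0)<\infty$, combined with continuity of $G_\E(z,t)$ in $t\in\bbT_j$ (using the convention \eqref{19nov1}), implies uniform convergence on compact subsets of $\Omega$ of $\sum_t D(t) G_\E(z,t) = -\log|B_\E(z,D)|$ to a nonnegative superharmonic function. Hence $|B_\E(\cdot,D)|\le 1$ on $\Omega$, and by construction its lift to $\bbD$ is character-automorphic with character $\a$. The normalization on $\cF$ makes $B_\E(x_*,D)>0$ for $x_*\in(\ba_k,\bb_k)$, so $B_\E(\cdot,D)\in H^\infty_\Omega(\a)$ is an admissible test function with positive value at $x_*$.

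Second, I would invoke the structural result of Sodin--Yuditskii that, when $\Omega$ is a regular Parreau--Widom domain with DCT, every Widom maximizer $W(z;\a,x_*)$ factors as $B_\E(z,D')$ for some $D'\in\cD_k(\E)$, namely the divisor of its zeros in $\Omega$ (with a gap containing no zero contributing the coset $\ba_j \sim \bb_j$). This rests on three ingredients: (i) extremality, together with $\|W\|_\Omega=1$, forces the lift of $W$ to $\bbD$ to be inner, with trivial singular inner part; (ii) the Parreau--Widom condition makes the Blaschke product of zeros converge and be expressible via the complex Green functions as in \eqref{eq:BlaschkeProdDiv}; (iii) a counting argument based on the harmonic measure identity of Lemma~\ref{lem:HarmonicMeasureEn} (applied to the inner lift) places exactly one zero in each gap $(\ba_j,\bb_j)$ with $j\neq k$ and none in the normalization gap $(\ba_k,\bb_k)$, yielding $D'\in\cD_k(\E)$. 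Establishing this structural representation is the main obstacle of the proof; it is the deep content of the cited works and rests on the full apparatus of character-automorphic Hardy spaces over Parreau--Widom domains.

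Third, I would compare characters via the generalized Abel map
\[
\pi : \cD_k(\E)\longrightarrow \pi_1(\Omega)^*, \qquad \pi(D) = \text{character of } B_\E(\cdot,D),
\]
which is continuous with respect to the product topology on $\cD_k(\E)$ and the topology on $\pi_1(\Omega)^*$ dual to the discrete topology on $\G$. The DCT hypothesis is precisely what makes $\pi$ a homeomorphism; in particular $\pi$ is injective. Since both $B_\E(\cdot,D)$ and $B_\E(\cdot,D')=W(\cdot;\a,x_*)$ have character $\a$, injectivity of $\pi$ forces $D=D'$ and hence $W(z;\a,x_*)=B_\E(z,D)$, as claimed.
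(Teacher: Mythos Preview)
The paper does not give a proof of this theorem: it is quoted from the cited references \cite[Theorem 5.1]{ChriSiZiYuDuke} and \cite[Proposition 2.3]{EichYuSbornik}, preceded only by the remark that DCT makes the generalized Abel map $\pi:\cD_k(\E)\to\pi_1(\Omega)^*$ a bijection and that the proof rests on the Sodin--Yuditskii construction. Your three-step outline (admissibility of $B_\E(\cdot,D)$; the Widom maximizer is itself a Blaschke product $B_\E(\cdot,D')$ with $D'\in\cD_k(\E)$; injectivity of $\pi$ forces $D=D'$) is exactly the strategy the paper alludes to, so at that level your proposal is in agreement with the paper.

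There is, however, a misattribution in your step 2(iii). You invoke Lemma~\ref{lem:HarmonicMeasureEn} to count zeros of the Widom maximizer, but that lemma is proved only for the $n$-extension $\E_{n}=F_{n}^{-1}([-1,1])$, a finite union of intervals arising from the rational extremizer $F_{n}$; its statement and proof use the Joukowsky representation \eqref{eq:extremalFunctionRep} and do not apply to an arbitrary inner lift of an $H^\infty_\Omega(\a)$-extremizer. The placement of zeros of $W(\cdot;\a,x_*)$ --- at most one per gap and none in the normalization gap --- is established in the cited references by different means (properties of character-automorphic inner functions and the Sodin--Yuditskii Abel map), not by the finite-$n$ harmonic-measure identity. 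Since you already concede that step 2 is ``the deep content of the cited works,'' this is a misplaced citation rather than a structural error, but Lemma~\ref{lem:HarmonicMeasureEn} should not be invoked here.
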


We see again that as for $F_n$, the extremal function only depends on the chosen gap $(\ba_k,\bb_k)$ and not the particular extremal point in the gap. Since the above theorem holds for any gap and arbitrary Blaschke products associated to divisors in $\cD_k(\E)$, we conclude that if $D\in\cD_j(\E)$ for $j\neq k$, then up to a unimodular constant $B_\E(z,D)$ is also the Widom maximizer for the gap $(\ba_j,\bb_j)$. This is in line with the Corollary \ref{cor:gapchange} for $F_n$.

Let $D_n^\infty$, $x_n^*$ and $d_n$ be as in Problem \ref{prob3} and define
\begin{align*}
B^{(n)}_\E(z)=e^{i\phi_n}\prod_{\bc}D_n^\infty(\bc)B_\E(z,\bc),
\end{align*}
where $e^{i\phi_n}$ is chosen such that
\begin{align}\label{eq:NormGreen2}
\lim\limits_{x\to x_n^*}B^{(n)}_\E(x)r(x,x_n^*)^{d_n}>0.
\end{align}
Let $\chi_n$ denote the character of $B_\E^{(n)}$. Let further
\[
W_n(z)=W(z;\chi_n,x_n^*),
\]
denote the Widom maximizer for the point $x_n^*$ and character $\chi_n$.

For the following we follow the spirit of \cite{ChriSiZiYuDuke} and  state convergence results on the universal cover $\D$ without introducing the corresponding lift of multi-valued functions on $\Omega$. To give an example: if $Q_n$ are $\pi_1(\Omega)^*$-character-automorphic function on $\Omega$, we will write $Q_n\to Q$ uniformly on compact subsets of $\D$, meaning that there are lifts $q_n$ of the $Q_n$ which are $\G^*$-character-automorphic functions such that $q_n\to q$ uniformly on compact subsets of $\D$ and $Q$ is the projection of $q$.

\begin{theorem}\label{thm:SzegoWidomAsymp}
Let $\E$ be a regular Parreau--Widom set, such that DCT holds in $\Omega$ and $F_n$ be the extremizer of \eqref{eq:Linftyextremal}. Then uniformly on compact subsets of $\D$, we have
\begin{align*}
\lim\limits_{n\to\infty}\left(B^{(n)}_\E(z)F_n(z)-\frac{1}{2}W_n(z)\right)=0.
\end{align*}
\end{theorem}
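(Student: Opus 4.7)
The plan is to combine Theorem~\ref{thm:harmonic} with a normal family argument on the universal cover, identify every subsequential limit up to a unimodular constant, and invoke DCT via Theorem~\ref{eq:thmWidomMax} to show $W_{n_\ell}$ admits the same limit. The whole argument runs as subsequence-of-subsequence: assuming the conclusion fails, I fix a compact $K\subset\D$, an $\e>0$, and a subsequence along which $\|B^{(n_\ell)}_\E F_{n_\ell}-\tfrac12 W_{n_\ell}\|_K>\e$, and I aim at a contradiction.

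By compactness of $\cD(\E)$ and the fact that $x_n^*$ does not accumulate on $\E$, I pass to a further subsequence along which $x_{n_\ell}^*\to x_\infty^*$ lying in some gap $(\ba_k,\bb_k)=(\ba,\bb)$, $D_{n_\ell}\to D\in\cD(\E)$, and eventually $x_{n_\ell}^*\in(\ba,\bb)$. Theorem~\ref{thm:harmonic} then yields $D(\ba)=1$, i.e.\ $D\in\cD_k(\E)$, together with
\[
\log|F_{n_\ell}(z)|-\sum_{\bc}D_{n_\ell}^\infty(\bc)G_\E(z,\bc)\longrightarrow -\log 2+\log|B_\E(z,D)|
\]
uniformly on compacts, which is the same as $|B^{(n_\ell)}_\E F_{n_\ell}|\to\tfrac12 |B_\E(\cdot,D)|$ on compacts of $\D$.

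Lifted to $\D$, each $B^{(n_\ell)}_\E F_{n_\ell}$ is $\G^*$-character-automorphic with character $\chi_{n_\ell}$; it is analytic on $\D$ (the zeros of $B^{(n_\ell)}_\E$ cancel the poles of $F_{n_\ell}$, and the divisor $D_n^\infty-(F_n)_\infty$ contributes additional zeros) and bounded by $1$ on $\D$ (use $|B^{(n_\ell)}_\E|=1$ and $|F_{n_\ell}|\le 1$ on $\E$, together with the maximum principle). By Montel I extract a uniform-on-compacts limit $f$; then $|f|=\tfrac12|B_\E(\cdot,D)|$, so $2f/B_\E(\cdot,D)$ is a $\G^*$-character-automorphic analytic function of constant modulus one, hence a unimodular constant $c$. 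To fix $c=1$ I use Theorem~\ref{thm:MarkovCorrection}\eqref{it:noZeroGap}: $F_{n_\ell}$ has no generalized zeros in $(\ba,\bb)$, so $B^{(n_\ell)}_\E F_{n_\ell}$ is nonvanishing and real on $(\ba,\bb)$; the normalization \eqref{eq:NormGreen2} together with the positive leading coefficient of $F_{n_\ell}$ at $x_{n_\ell}^*$ forces it to be positive there, while $B_\E(\cdot,D)>0$ on $(\ba,\bb)$ by the phase convention for $D\in\cD_k(\E)$. Since every subsequential limit equals $\tfrac12 B_\E(\cdot,D)$, the full chosen subsequence converges to this limit uniformly on compacts of $\D$.

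Uniform convergence of the character-automorphic lifts forces $\chi_{n_\ell}\to\chi_\infty$, the character of $B_\E(\cdot,D)$. By Theorem~\ref{eq:thmWidomMax}, for $\ell$ large $W_{n_\ell}=B_\E(\cdot,D_{n_\ell}^W)$ with $D_{n_\ell}^W\in\cD_k(\E)$ the unique divisor of character $\chi_{n_\ell}$; under DCT the Abel map $\cD_k(\E)\to\pi_1(\Omega)^*$ is a continuous bijection of compact Hausdorff spaces, hence a homeomorphism, so $D_{n_\ell}^W\to D$. Continuous dependence of the Blaschke product on its divisor (a consequence of the Parreau--Widom condition) then gives $W_{n_\ell}\to B_\E(\cdot,D)$ uniformly on compacts of $\D$, contradicting the choice of subsequence. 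The main obstacle is precisely this last step---turning pointwise character convergence into uniform-on-compacts convergence of the full $W_{n_\ell}$---which is what DCT contributes and explains why bare Parreau--Widom regularity does not suffice; a secondary delicate point is fixing the unimodular constant $c=1$ by reconciling the two positivity normalizations in the extremal gap $(\ba,\bb)$.
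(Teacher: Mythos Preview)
Your proof is correct and follows essentially the same route as the paper's: normal family on $\D$, Theorem~\ref{thm:harmonic} for the modulus, positivity in the extremal gap to pin down the phase, and DCT for the $W_{n_\ell}$ side. The only real difference is in the final step: the paper invokes \cite[Theorem~3.1]{ChriSiZiYuDuke} directly for $W(\cdot;\chi_{n_\ell},x_{n_\ell}^*)\to W(\cdot;\alpha,x_\infty^*)$, whereas you go through the Abel map $\cD_k(\E)\to\pi_1(\Omega)^*$ being a homeomorphism and then continuity of $D\mapsto B_\E(\cdot,D)$. Your version is self-contained but note that the continuity of the Abel map you assert is itself a consequence of the Parreau--Widom estimate you invoke a sentence later (continuity of $D\mapsto B_\E(\cdot,D)$ on compacts, combined with the observation that uniform limits of character-automorphic functions have limiting character), so you may want to reorder those two sentences.
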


We will use the following simple criterion based on normality; note that it is simpler than the corresponding criterion used in the polynomial case \cite[Proposition 4.2]{ChriSiZiYuDuke}, since our approach avoids working on multivalued functions on varying domains:
\begin{proposition} \label{prop:convergence}
	Let $\{q_n\}_{n=1}^\infty$ be a normal family on $\D$.
	Let $q_\infty$ be analytic on $\D$ so that for some $\zeta_0\in \D$ and some neighborhood, $V$, of $\zeta_0$ we have that
	\begin{align}
	&\lim_{n\to\infty}|q_n(\z)|=|q_\infty(\z)|\quad \text{ for all }\z\in V; \label{31dec1} \\
	&q_n(\z_0)>0,\quad q_\infty(\z_0)>0. \label{31dec2}
	\end{align}
	Then $q_n\to q_\infty$ uniformly on compact subsets of $\D$.
\end{proposition}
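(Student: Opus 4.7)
The plan is to argue by subsequences: normality gives us that every subsequence of $\{q_n\}$ has a further subsequence converging locally uniformly on $\D$ to some analytic limit, and I will show that any such limit must equal $q_\infty$. Once that is done, the full sequence must converge to $q_\infty$ uniformly on compacts, for otherwise some subsequence would stay bounded away from $q_\infty$ on a compact set and yet admit a further subsequence converging to $q_\infty$, a contradiction.

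So fix an arbitrary locally uniform limit $q = \lim_{j\to\infty} q_{n_j}$ of a subsequence. By \eqref{31dec1}, $|q(\zeta)| = |q_\infty(\zeta)|$ for every $\zeta \in V$. The main step is to upgrade this modulus identity to $q = q_\infty$ on all of $\D$. Since $q_\infty(\zeta_0) \neq 0$, there is a neighborhood $V_0 \subset V$ of $\zeta_0$ on which $q_\infty$ and (therefore) $q$ are both nonvanishing. On $V_0$ the quotient $q/q_\infty$ is analytic with modulus identically $1$, so by the open mapping theorem it is a unimodular constant $c$, giving $q = c q_\infty$ on $V_0$ and hence, by the identity principle for analytic functions on the connected set $\D$, on all of $\D$.

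It remains to determine $c$. Because $q_n(\zeta_0) > 0$ for every $n$, the limit $q(\zeta_0) = \lim_j q_{n_j}(\zeta_0)$ is real and nonnegative; moreover $|q(\zeta_0)| = |q_\infty(\zeta_0)| > 0$ by \eqref{31dec1}, \eqref{31dec2}, so $q(\zeta_0) > 0$. Combined with $q_\infty(\zeta_0) > 0$ from \eqref{31dec2}, this forces $c = q(\zeta_0)/q_\infty(\zeta_0) = 1$, so $q = q_\infty$. Since this holds for every subsequential limit, $q_n \to q_\infty$ locally uniformly on $\D$, which is the desired conclusion.

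The only delicate point is the modulus-to-function step: one has to know that zeros of $q$ and $q_\infty$ match inside $V$ before one can divide. This is automatic from $|q| = |q_\infty|$, but it is the reason the hypothesis at $\zeta_0$ is imposed with $q_\infty(\zeta_0) > 0$ rather than merely $q_\infty(\zeta_0) \neq 0$ — the strict positivity, together with the sign condition on $q_n(\zeta_0)$, is exactly what eliminates the unimodular ambiguity and singles out $c = 1$.
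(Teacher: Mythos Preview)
Your proof is correct and follows essentially the same route as the paper: pass to subsequential limits by normality, get $|q|=|q_\infty|$ on a neighborhood of $\zeta_0$, shrink to where $q_\infty\neq 0$, conclude $q/q_\infty$ is a unimodular constant (you invoke the open mapping theorem, the paper uses the maximum principle---same content), and pin down the constant as $1$ from the positivity at $\zeta_0$.
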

\begin{proof}
By normality, it suffices to prove that any subsequence $(q_{n_\ell})_{\ell=1}^\infty$ which converges uniformly on compacts has the limit $q_\infty$. Denote by $f$ the limit of such a sequence. By \eqref{31dec1}, $\lvert f(\z) \rvert =\lvert q_\infty(\z) \rvert$ for all $\z \in V$. By \eqref{31dec2}, by possibly decreasing $V$, we can assume $q_\infty(\zeta) \neq 0$ for $\zeta \in V$, so by the maximum principle applied to $f / q_\infty$, we conclude $f = e^{i\phi} q_\infty$ for some unimodular constant $e^{i\phi}$. By \eqref{31dec2}, $f(\zeta_0) \ge 0$ and $q_\infty(\zeta_0) > 0$, so $e^{i\phi} = 1$ and $f = q_\infty$.
\end{proof}

Defining
\begin{align*}
Q_n(z)=F_n(z)B^{(n)}_\E(z),
\end{align*}
the strategy is now clear: First we need to check that $Q_n(z)$ defines a normal family. Realizing that $\log|Q_n(z)|$ is exactly the left hand-side in \eqref{eq:limitHarmonic}, Theorem \ref{thm:harmonic} and Proposition \ref{prop:convergence} imply that all accumulation points are Blaschke products. Combining this with Theorem \ref{eq:thmWidomMax} finishes the proof of Theorem \ref{thm:SzegoWidomAsymp}.
\begin{lemma}
The sequence $\{Q_n\}_{n=1}^\infty$ forms a normal family in $\D$.
\end{lemma}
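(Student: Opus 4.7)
The plan is to prove uniform boundedness of (the lifts of) $Q_n$ on the universal cover $\D$ and then conclude by Montel's theorem. By the conformal invariance established in Lemma~\ref{lem:conformalinvar}, we may assume $\infty\in\Omega$, so that $\E\subset\R$ is compact and the Bernstein--Walsh estimate \eqref{eq:BernsteinWalsh2} applies directly.

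The key ingredient is the real-valued Bernstein--Walsh inequality of Lemma~\ref{lem:BernsteinWalsh}, applicable because $F_n$ is real by Theorem~\ref{thm:MarkovCorrection}\eqref{it:real} and $\|F_n\|_\E\le 1$. It gives
\begin{align*}
|F_n(z)|\le \tfrac12\bigl(e^{A_n(z)}+e^{-A_n(z)}\bigr),\qquad A_n(z):=\sum_{\bc}(F_n)_\infty(\bc)G_\E(z,\bc),
\end{align*}
while the definitions \eqref{eq:BlaschkeProduct}--\eqref{eq:ComplexGreenGeneral} yield
\begin{align*}
|B^{(n)}_\E(z)|=e^{-B_n(z)},\qquad B_n(z):=\sum_{\bc}D_n^\infty(\bc)G_\E(z,\bc).
\end{align*}
Since $(F_n)_\infty\le D_n^\infty$ and $G_\E\ge 0$, both $B_n-A_n$ and $B_n+A_n$ are nonnegative on $\Omega$, so multiplying the two bounds gives
\begin{align*}
|Q_n(z)|\le \tfrac12\bigl(e^{A_n(z)-B_n(z)}+e^{-A_n(z)-B_n(z)}\bigr)\le 1.
\end{align*}

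This estimate is a priori only valid on $\Omega\setminus\supp D_n^\infty$, but it extends by continuity to all of $\Omega$, since at each $\bc$ the zero of $B^{(n)}_\E$ of order $D_n^\infty(\bc)$ dominates the pole of $F_n$ of order $(F_n)_\infty(\bc)\le D_n^\infty(\bc)$, so $Q_n$ is analytic there. Lifting through the covering map $\bz:\D\to\Omega$ produces single-valued analytic functions $q_n$ on $\D$ — the appropriate single-valued lift of $B_\E(\cdot,\bc)$ being the standard Blaschke product $b(\zeta,\zeta_\bc)$ from \eqref{def:Blaschke} — and the pointwise bound transfers to $|q_n(\zeta)|\le 1$ on $\D$. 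Montel's theorem then yields normality. I do not anticipate any serious obstacle; the only mild bookkeeping point is the divisor inequality $(F_n)_\infty\le D_n^\infty$, which is precisely what both cancels the poles of $F_n$ inside $Q_n$ and forces the exponents in the Bernstein--Walsh bound to be nonpositive.
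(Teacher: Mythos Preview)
Your proof is correct and reaches the same conclusion $|Q_n|\le 1$ on $\Omega$, followed by Montel, but the route differs from the paper's. The paper argues directly that $\log|Q_n|$ is subharmonic on $\Omega$ and, invoking Dirichlet regularity of $\E$, that $\limsup_{z\to\zeta}\log|Q_n(z)|\le 0$ for every $\zeta\in\E$; the maximum principle for subharmonic functions then gives $\log|Q_n|\le 0$. You instead recycle the refined Bernstein--Walsh inequality \eqref{eq:BernsteinWalsh2} together with the divisor comparison $(F_n)_\infty\le D_n^\infty$ to obtain the pointwise estimate $|Q_n|\le\tfrac12(e^{A_n-B_n}+e^{-A_n-B_n})\le 1$. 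Your approach has the mild advantage of not using Dirichlet regularity (the Bernstein--Walsh lemma does not require it), though regularity is a standing hypothesis in this section anyway; the paper's approach is self-contained at this point and avoids the conformal reduction to $\infty\in\Omega$. Both are short and essentially equivalent in spirit, since the Bernstein--Walsh lemma itself was proved by the same maximum-principle mechanism.
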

\begin{proof}
	Since on $\Omega$	\begin{align*}
	(F_n)_\infty\leq D_n^\infty=(B^{(n)}_\E)_0,
	\end{align*}
	 $Q_n$ are analytic $\pi_1(\Omega)^*$-character automorphic functions in $\Omega$. They have therefore $\Gamma^*$-character automorphic lifts to $\D$. By Montel's theorem \cite[Chapter 6]{SimonBasicCompAna}, it suffices to show that $|F_nB^{(n)}_\E|\leq 1$ in $\Omega$. The functions $\log|Q_n|$ are
	 subharmonic in $\Omega$. Moreover, since $\E$ is regular and $|F_n|\leq 1$ on $\E$, for every $\zeta\in\E$
	\begin{align*}
	\limsup_{z\to \zeta}	\log|Q_n(z)|=\limsup_{z\to \zeta} \log|F_n(z)|-\lim\limits_{z\to\zeta}\sum_{t}D_n^\infty(t)\lim_{z\to \zeta}G_\E(\zeta,t)\leq 0,
	\end{align*}
	where we used Dirichlet regularity and the fact that the sum is only finite. The claim follows by the maximum principle for subharmonic functions \cite[Theorem 2.3.1]{RansfordPotTheorie}.
\end{proof}

\begin{lemma}\label{lem:subsequences}
	Let $n_\ell$ be a subsequence such that $\lim_{\ell\to\infty}D_{n_\ell}=D$ and $\lim_{\ell\to\infty}x_{n_\ell}^*=x_\infty^*\in(\ba_j,\bb_j)$ for some $j\geq 0$. Then, uniformly on compact subsets of $\D$ we have
	\begin{align*}
	\lim\limits_{\ell\to\infty}Q_{n_\ell}(z)=\frac{1}{2}B_\E(z,D),
	\end{align*}
	where $D\in\cD_j(\E)$ and $B_\E(x_\infty^*,D)>0$.
\end{lemma}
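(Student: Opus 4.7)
The plan is to combine three ingredients: normality of $\{Q_n\}$ from the preceding lemma, convergence of $\log|Q_{n_\ell}|$ given by Theorem~\ref{thm:harmonic}, and the sign normalizations at $x_{n_\ell}^*$ coming from the extremality of $F_{n_\ell}$ together with \eqref{eq:NormGreen2}. I work on the universal cover, writing $q_{n_\ell}$ for the $\G^*$-character-automorphic lift of $Q_{n_\ell}$ to $\D$ and $q_\infty$ for the lift of $B_\E(\cdot,D)/2$, and the goal is to show $q_{n_\ell}\to q_\infty$ uniformly on compact subsets of $\D$. First, since $x_\infty^*\in(\ba_j,\bb_j)$, Theorem~\ref{thm:harmonic} gives $D(\ba_j)=1$, so $D\in\cD_j(\E)$; the normalization convention in \eqref{eq:BlaschkeProdDiv} then guarantees $B_\E(x_\infty^*,D)>0$, and in particular $q_\infty(\z_\infty)\neq 0$ at a fixed lift $\z_\infty\in\bz^{-1}(x_\infty^*)$. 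Observing that
\[
\log|Q_{n_\ell}(z)|=\log|F_{n_\ell}(z)|-\sum_{\bc}D_{n_\ell}^\infty(\bc)G_\E(z,\bc),
\]
Theorem~\ref{thm:harmonic} yields $\lim_{\ell}\log|Q_{n_\ell}(z)|=\log|B_\E(z,D)/2|$ on $\overline{\bbC}\setminus(\overline\R\setminus(\ba_j,\bb_j))$, which lifts to $\lim_\ell\log|q_{n_\ell}(\z)|=\log|q_\infty(\z)|$ on a $\D$-neighborhood $V$ of $\z_\infty$.

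Next I would extract the subsequential limit. By the normality established in the previous lemma, any subsequence of $\{q_{n_\ell}\}$ admits a further subsequence converging uniformly on compact subsets of $\D$ to some analytic $f\colon\D\to\bbC$, which satisfies $|f|=|q_\infty|$ on $V$. Shrinking $V$ so that $q_\infty$ is nonvanishing there, the analytic ratio $f/q_\infty$ has constant modulus $1$, so by the maximum principle it is a unimodular constant $c$ on $V$; analytic continuation gives $f=c\,q_\infty$ on all of $\D$.

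To fix $c=1$ I would invoke the normalizations at $x_{n_\ell}^*$: extremality of $F_{n_\ell}$ gives $\lim_{x\to x_{n_\ell}^*}F_{n_\ell}(x)/r(x,x_{n_\ell}^*)^{d_{n_\ell}}>0$, and \eqref{eq:NormGreen2} gives $\lim_{x\to x_{n_\ell}^*}B_\E^{(n_\ell)}(x)r(x,x_{n_\ell}^*)^{d_{n_\ell}}>0$. Multiplying, the pole of order $d_{n_\ell}$ of $F_{n_\ell}$ at $x_{n_\ell}^*$ cancels the zero of $B_\E^{(n_\ell)}$ there, yielding $Q_{n_\ell}(x_{n_\ell}^*)>0$. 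Choosing lifts $\z_{n_\ell}\in\bz^{-1}(x_{n_\ell}^*)$ consistently so that $\z_{n_\ell}\to\z_\infty$ (possible since $\bz$ is a local biholomorphism near $\z_\infty$), we have $q_{n_\ell}(\z_{n_\ell})>0$, and uniform convergence on compacts gives $q_{n_\ell}(\z_{n_\ell})\to f(\z_\infty)$, hence $f(\z_\infty)\geq 0$. Combined with $|f(\z_\infty)|=q_\infty(\z_\infty)>0$ and the positivity of $q_\infty(\z_\infty)$, this forces $c=1$. Since every subsequence of $\{q_{n_\ell}\}$ has a further subsequence converging to the same limit $q_\infty$, the full sequence converges to $q_\infty=B_\E(\cdot,D)/2$ uniformly on compact subsets of $\D$. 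The main subtlety is the varying reference point: Proposition~\ref{prop:convergence} is stated for a fixed base point, whereas here $x_{n_\ell}^*$ moves with $\ell$, and the above varying-base-point argument is needed to transfer the positivity from the shifting lifts $\z_{n_\ell}$ to the single limit $\z_\infty$ via uniform convergence on compacts.
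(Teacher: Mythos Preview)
Your argument is correct and follows essentially the same route as the paper: normality, Theorem~\ref{thm:harmonic} for the modulus, and the positivity normalizations to pin down the unimodular constant. The one place where the paper is slightly slicker is in verifying the positivity hypothesis of Proposition~\ref{prop:convergence}. You carefully track the moving base points $\z_{n_\ell}\to\z_\infty$ and pass positivity through the limit. The paper instead observes that $Q_{n_\ell}$ is \emph{real} (since $F_{n_\ell}$ and $B_\E^{(n_\ell)}$ are) and has no zeros on the whole gap $(\ba_j,\bb_j)$ (because $D_{n_\ell}^0=0$ there by Theorem~\ref{thm:MarkovCorrection}\eqref{it:noZeroGap}); combined with $Q_{n_\ell}(x_{n_\ell}^*)>0$ this gives $Q_{n_\ell}(t)>0$ for \emph{every} $t\in(\ba_j,\bb_j)$, in particular at the fixed point $x_\infty^*$. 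This lets one invoke Proposition~\ref{prop:convergence} verbatim at $\z_\infty$ without the varying-base-point maneuver you flag at the end. Both arguments are valid; the paper's just avoids the extra step.
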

\begin{proof}
	Let us assume without loss of generality that all $x_{n_\ell}^*$ lie in $(\ba_j,\bb_j)$. By \eqref{eq:NormGreen2}, we have
	\begin{align*}
	Q_{n_{\ell}}(x_{n_\ell}^*)>0.
	\end{align*}
	Moreover, $Q_{n_\ell}$ are real, i.e., $Q_{n_\ell}(\overline{z})=\overline{Q_{n_\ell}(z)}$. Since $D_n^0(t)=0$ for every $t\in(\ba_j,\bb_j)$, it follows that $Q_{n_\ell}(t)>0$. Thus, in particular at $x_\infty^*$. Thus we can apply  Proposition \ref{prop:convergence} in a vicinity of $x_{\infty}^*$ and then the claim follows from Theorem \ref{thm:harmonic}.
\end{proof}
\begin{proof}[Proof of Theorem \ref{thm:SzegoWidomAsymp}]Since ${Q_n},W_n$ form normal families, by precompactness it suffices to prove that every subsequence has a subsubsequence so that $\lim_\ell Q_{n_\ell}-W_{n_\ell}= 0$.	 Let us pass to a subsequence such that $\lim_{\ell\to\infty}D_{n_\ell}=D$ and $\lim_{\ell\to\infty}x_{n_\ell}^*=x_\infty^*\in(\ba_j,\bb_j)$ as in Lemma~\ref{lem:subsequences}. Then by Lemma \ref{lem:subsequences}
	\begin{align*}
	\lim\limits_{\ell\to\infty}Q_{n_\ell}(z)=\frac{1}{2}B_\E(z,D).
	\end{align*}
	If $\alpha$ is the character of $B_\E(z,D)$ this implies that $\chi_{n_\ell}\to\alpha$. By Theorem \ref{eq:thmWidomMax}, $B_\E(z,D)=W(z,\alpha,x^*_\infty)$. On the other hand, it is proven in \cite[Theorem 3.1]{ChriSiZiYuDuke} that DCT implies that $W(z;\chi_{n_\ell},x^*_{n_\ell})\to W(z,\alpha,x^*_\infty)$ uniformly on compact subsets of $\D$. In this reference there is no sequence of extremal points, but since the Widom maximizer only depends on the given gap and not on the particular point, the sequence $W(z;\chi_{n_\ell},x^*_{n_\ell})$ eventually only depends on the character. This concludes the proof.
\end{proof}

\providecommand{\MR}[1]{}
\providecommand{\bysame}{\leavevmode\hbox to3em{\hrulefill}\thinspace}
\providecommand{\MR}{\relax\ifhmode\unskip\space\fi MR }
\providecommand{\MRhref}[2]{%
	\href{http://www.ams.org/mathscinet-getitem?mr=#1}{#2}
}
\providecommand{\href}[2]{#2}

	\end{document}